\documentclass{article}

\usepackage[T1]{fontenc}
\usepackage[utf8]{inputenc}
\usepackage[english]{babel}
\usepackage{lmodern}
\usepackage[margin=3cm]{geometry}
\usepackage[backend=biber,style=alphabetic,sorting=nty,doi=false,isbn=false,url=false,minalphanames=2,maxbibnames=4,maxcitenames=4,sortcites]{biblatex}
\AtEveryBibitem{\clearlist{language}}

\usepackage{empheq}

\usepackage{authblk}
\usepackage[hidelinks]{hyperref}
\usepackage{enumitem}
\usepackage{tikz-cd} 	
\usepackage{mathtools}
\usepackage{alltt}
\usepackage{amsfonts}
\usepackage{amsmath}
\usepackage{amssymb}
\usepackage{amsthm}
\usepackage{tikz-cd} 	
\usepackage[nottoc]{tocbibind}
\usepackage{graphicx}
\usepackage[font={small,it}]{caption}
\usepackage{subcaption}
\usepackage{aliascnt}
\usepackage{cases}
\usepackage{comment}	
\usepackage[capitalize,nameinlink]{cleveref}
\usepackage{tikz}
\usetikzlibrary{tikzmark}
\usepackage{slashed}

\crefname{figure}{Fig.}{Fig.}
\Crefname{figure}{Fig.}{Fig.}
\crefname{subfigure}{Fig.}{Fig.}
\Crefname{subfigure}{Fig.}{Fig.}

\usepackage{xargs}   
\usepackage[colorinlistoftodos,prependcaption,textsize=tiny]{todonotes}
\newcommandx{\typo}[2][1=]{\todo[linecolor=red,backgroundcolor=red!25,bordercolor=red,#1]{#2}}
\newcommandx{\change}[2][1=]{\todo[linecolor=blue,backgroundcolor=blue!25,bordercolor=blue,#1]{#2}}
\newcommandx{\answer}[1]{\todo[linecolor=pink,backgroundcolor=pink!25,bordercolor=pink]{#1}}
\newcommandx{\unsure}[2][1=]{\todo[linecolor=green,backgroundcolor=green!25,bordercolor=green,#1]{#2}}
\newcommandx{\improve}[2][1=]{\todo[linecolor=violet,backgroundcolor=violet!25,bordercolor=violet,#1]{#2}}
\newcommandx{\thiswillnotshow}[2][1=]{\todo[disable,#1]{#2}}

\usepackage{tocloft}
\crefformat{equation}{(#2#1#3)}
\numberwithin{equation}{section}

\theoremstyle{definition}

\newtheorem*{notation}{Notation}

\newtheorem*{theorem*}{Theorem}
\newtheorem*{conjecture*}{Conjecture}

\theoremstyle{plain}
\newtheorem{theorem}{Theorem}[section]

\newtheorem{lemma}{Lemma}[section]
\newtheorem{prop}{Proposition}[section]
\newtheorem{conjecture}{Conjecture}[section]
\newtheorem{cor}{Corollary}[section]

\newtheorem{definition}{Definition}[section]

\theoremstyle{remark}
\newtheorem{remark}{Remark}[section]

\crefname{lemma}{Lemma}{Lemmata}
\crefname{prop}{Proposition}{Proposition}
\crefname{conjecture}{Conjecture}{Conjecture}
\crefname{cor}{Corollary}{Corollary}
\crefname{remark}{Remark}{Remark}
\crefname{defi}{Definition}{Definition}
\crefname{equation}{}{}
\crefname{enumi}{}{}
\crefname{appendix}{Appendix}{Appendices}

\newcommand{\dd}{\mathop{}\!\mathrm{d}}

\newenvironment{nalign}{
	\begin{equation}
		\begin{aligned}
		}{
		\end{aligned}
	\end{equation}
	\ignorespacesafterend
}

\renewcommand{\sl}{\mathfrak{sl}}

\newcommand{\C}{\mathcal{C}}

\newcommand{\N}{\mathbb{N}}

\newcommand{\R}{\mathbb{R}}

\newcommand{\T}{\mathbb{T}}

\newcommand{\scrip}{\mathcal{I}^+}
\newcommand{\scrim}{\mathcal{I}^-}

\newcommand{\abs}[1]{\left\lvert #1\right\rvert}

\newcommand{\norm}[1]{\left\lVert #1\right\rVert}

\newcommand{\floor}[1]{\lfloor #1 \rfloor}

\newcommand{\D}{\mathcal{D}}

\DeclareMathOperator{\supp}{supp}


\newcommand{\CH}{\mathcal{CH}}

\title{A note on exterior stability of isolated singularity formation\\ for nonlinear wave equations}

\author[1]{Istvan Kadar\thanks{istvan.kadar@math.ethz.ch}}
\author[2,3]{Lionor  Kehrberger\thanks{kehrberger@mis.mpg.de}}
\affil[1]{Department of Mathematics, ETH Zürich, Rämistrasse 101, Zürich}
\affil[2]{Max Planck Institute for Mathematics in the Sciences,  Inselstraße 22, 04103 Leipzig, Germany}
\affil[3]{University of Leipzig, Institute for Theoretical Physics, Br\"uderstraße 16,  04103 Leipzig, Germany}

\usepackage[makeroom]{cancel}
\usepackage{csquotes}
\newcommand{\brho}{\boldsymbol{\rho}}
\newcommand{\Cbar}{\underline{\C}}
\renewcommand{\O}{\mathcal{O}}
\newcommand{\pu}{\partial_u}
\newcommand{\pv}{\partial_v}
\renewcommand{\sl}{\slashed{\nabla}}
\renewcommand{\b}{\mathrm{b}}
\newcommand{\Vb}{\mathcal{V}_{\b}}
\newcommand{\Ve}{\mathcal{V}_{\mathrm{e}}}
\newcommand{\Hb}{H_{\b}}

\newcommand{\A}{\mathcal{A}}
\newcommand{\Aext}{\bar{\mathcal{A}}}
\newcommand{\Vbs}{\mathcal{V}_{\b,\scrip}}

\graphicspath{{figures}}

\begin{document}
	\maketitle
	\begin{abstract}
		We study the stability of the exterior of Type I and Type II singularity formation for the wave maps equation in $\R^{d+1}$ with $d\geq2$ and the power nonlinear wave equation in $\R^{d+1}$ with $d\geq3$:
		Given characteristic initial data on the backwards lightcone of the singularity $\C=\{t+r=0\}$ converging to the singular background solution along with suitable  data on an outgoing cone, we establish existence in a region $\{t+r\in(0,v_1),t-r\in(-1,0)\}$ for some suitably small $v_1$, i.e.~all the way to the Cauchy horizon. 

        Our result hinges on a particular set of assumptions on the regularity properties of these initial data, which conjecturally can be recovered by a more detailed stability analysis of the behaviour inside the past light cone; indeed, in certain settings, this was achieved in \cite{biernat_hyperboloidal_2021,kadar_smooth_2026}, and we strongly expect they can be proved in many other settings as well.

		The proof goes via a suitable change of coordinates and an application of the scattering result of \cite{kadar_scattering_2025}, which, in particular, also applies to scaling-critical potentials.
        
        While no symmetry assumption is made for the power nonlinear wave equation, we only provide the proof in the corotational symmetry class for the wave maps equation, but we also  sketch how to lift this restriction.
     
	\end{abstract}
	
	\setcounter{tocdepth}{2}
	\tableofcontents

\section{Introduction}\label{sec:intro}

\subsection{General Context}
	We study the power nonlinear wave equation and the wave maps equation in $\R^{d+1}$:
	\begin{subequations}
		\begin{align}
			P_{\mathrm{NW}}[\phi]&=\Box\phi+\phi^{p}=0,& \phi:\R^{d+1}\to\R\label{in:eq:nonlin}\tag{NW}, \quad d\geq 3;\\
			P_{\mathrm{WM}}[\vec{\phi}]&=\Box\phi^a+\phi^a(\partial\phi^b\cdot\partial\phi^b)=0,& \vec{\phi}:\R^{d+1}\to S^d,\quad d\geq 2\label{in:eq:wave_map}\tag{WM},
		\end{align}
	\end{subequations}
	where  $\Box=-\partial_t^2+\Delta_{\R^d}$, $\phi^a$ denote the components of $\vec{\phi}$ and we use Cartesian coordinates $x_i$, $i=1,\dots,d$ on $\R^d$. We also denote $r^2=\sum_i x_i^2$, and use the double null coordinates $u=\frac{t-r}{2}$, $v=\frac{t+r}{2}$.

    It is well known that both \cref{in:eq:nonlin} and \cref{in:eq:wave_map} can develop \emph{highly symmetric} singularities in finite time by concentrating a finite nonzero amount of scale critical norm at a single point \cite{shatah_weak_1988,rodnianski_formation_2010,krieger_renormalization_2008,krieger_slow_2009,raphael_stable_2012,hillairet_smooth_2012}.
    In the case of \cref{in:eq:wave_map}, the prominent symmetry setting is that of $K$-corotational wave-maps, where \cref{in:eq:wave_map} reduces to the scalar equation (see \cite{cazenave_harmonic_1998,raphael_stable_2012,biernat_non-self-similar_2015} for a derivation\footnote{For $K=1$, this symmetry assumption means that $\vec\phi$ can be written as,  for $\phi$ depending only on $|x|=r$ and $t$: 
    \begin{equation}
		\vec{\phi}=\begin{pmatrix} \frac{x}{\abs{x}}\sin(\phi(t,x)) \\ \cos(\phi(t,x)) \end{pmatrix}.
	\end{equation}}) taking the form
    \begin{equation}
\label{in:eq:wave_map_k}\tag{WM-s}
\begin{aligned}
P_{\mathrm{WM-s}}[\phi]
  &= \Box\phi + \frac{K(d+K-2)\sin(2\phi)}{2r^2}
  && \text{for }  d \ge 2,\, K\geq1; 
\end{aligned}
\end{equation}

In general, for equations such as \cref{in:eq:nonlin} and \cref{in:eq:wave_map}, the future boundary $\partial^+\mathcal{D}$ of the maximal globally hyperbolic development---see \cite{alinhac_blowup_1995} for a precise definition---may have a quite complicated structure allowing for vastly different phenomena, see  \cite{cote_construction_2013} for nonlinear wave equations in one space dimension and the introduction of \cite{kadar_scattering_2026} for a more extended literature review. 
In particular, the singular boundary may in general have spacelike segments; see, for instance, \cite{donninger_nonlinear_2010}.\footnote{For more exotic singularities, we refer the reader to \cite{bizon_self-similar_2010}.}
In this note, however, we show that for a large class of {isolated} singular solutions, \textbf{the picture of a future (null) Cauchy horizon emanating from the singularity is stable }(see the left of \cref{fig:exterior}).\footnote{In general, we call points $p\in\partial^+\mathcal{D}$ an isolated singularity if, for all sufficiently small annular neighbourhoods $U_d=\{x\in\Sigma: \abs{x-p}\in(d,2d)\}$ around $p$, $\Sigma\cap U_d$ is a Cauchy horizon, i.e.~the solution is extendible across $\Sigma$ in $U_d$.} See also \cite{biernat_hyperboloidal_2021,donninger_stable_2025} for related results.

One insight of this paper is that we can address the question of the nature of the singular boundary by splitting up the analysis into an \textit{interior} stability analysis, i.e.~a stability analysis inside the past light cone $\C$ emanating from the singularity, and an \textit{exterior} stability analysis to the future of the past light cone.
What this paper achieves is then the following: Based on known results in the literature on interior stability (which we adorn with some strengthening assumptions that in some cases are still conjectural), we show that the exterior stability analysis can be performed in a unified way using the scattering framework of our previous \cite{kadar_scattering_2025}.

	\subsection{Formulation of the problem}
    More precisely, we study the stability of such singular solutions in the exterior of the backwards lightcone emanating from the singularity in both the setting of self-similar (Type I) singularity formation, and faster than self-similar (Type II) singularity formation. 
   In all cases, we consider perturbations of \textit{isolated singularities}\footnote{In particular, we do not study spacelike singularities such as the ODE blow-up for \cref{in:eq:nonlin} discussed in \cite{merle_determination_2003,donninger_stable_2014}.} (although in some cases we can only infer the isolatedness of the singularities a posteriori from our exterior stability result, see already \cref{ref:rem:remmidemmi}). 

	For Type I solutions, we consider self-similar blow-up profiles $\phi_0$ (solving either \cref{in:eq:nonlin} or \cref{in:eq:wave_map_k}) of the form
	\begin{equation}\label{in:eq:self_similar}
		{\phi_0(t,x)=\big(2r-t\big)^{-c}\Phi_0\big(\frac{r}{2r-t}\big)}
	\end{equation}
	for some $\Phi_0(y)$ smooth in $\{\abs{y}\leq 3/2\}$ and $c$ a suitable constant.
All the examples of Type I singularities in this paper will either have $\Phi_0$  i) spherically symmetric for \cref{in:eq:nonlin} or ii) $K$-corotational for \cref{in:eq:wave_map_k}.
   At the same time, our results concerning the exterior stability of these symmetric solutions hold outside of these symmetry classes; the symmetry is only a feature of the given background $\Phi_0$.
   
    For energy-critical Type II solutions, the exterior background is simply $\phi_0=0$ for both \cref{in:eq:nonlin} and \cref{in:eq:wave_map_k} (for the latter, $\phi_0=0$ corresponds to $\vec{\phi}_0$ a point on $S^d$ for \cref{in:eq:wave_map}), and it is not necessary to specify further requirements.
   In other words, for energy-critical Type II solutions, our exterior analysis is perturbative around the trivial solution. 
   
   For the description of singular energy-supercritical Type II solutions, we refer the reader the paragraph around \cref{in:conj:type2_sup}. These are the solutions where the singularities are not a priori known to be isolated.

	The problem we study in this paper is formulated as follows: 
    Let $\C=\{v=0, u\geq -1\}$ denote the backwards light-cone of the singularity, and let $\Cbar=\{v>0, u=-1\}$ denote a transversal light cone; see \cref{fig:exterior}. 
	We then consider perturbations of $\phi_0$ by prescribing  initial data on $\C\cup\Cbar$ and solving the nonlinear problem
	\begin{equation}\label{in:eq:characteristic}
		P[\phi]=0, \qquad \phi|_{\C}=\phi_{\C}=\bar{\phi}_{\C}+{\phi}_0|_{\C}, \, \phi|_{\Cbar}=\phi_{\Cbar}=\bar{\phi}_{\Cbar}+\phi_0|_{\Cbar},
	\end{equation}
    where $P$ denotes either \cref{in:eq:nonlin} or \cref{in:eq:wave_map_k}, and $\phi_0$ is as above.

    The main result of the paper is that under suitable initial data assumptions on the perturbations $\phi_{\C}$ and $\phi_{\Cbar}$, and for $v_1>0$ sufficiently small, the solution to \cref{in:eq:characteristic} exists in the domain $\D_{-1,v_1}=\{v\in(0,v_1), u\in[-1,0)\}$ and, in fact, can be {({non-uniquely})} extended across the Cauchy horizon $\CH=\{v>0, u=0\}$. 
   If we pose the data at $u=-\epsilon$ for $\epsilon\ll1$ and make additional assumptions, we can also show that $v_1$ can be taken to be infinite.
    The ``suitable'' initial data assumptions have been recovered in certain interior stability analyses. We strongly expect them to be recoverable in many other settings; we give more detail below.
    In particular, we expect that following further progress on interior stability analysis, our exterior stability result will eventually be applicable to any compactly supported smooth perturbations at, say, $t=-1$, of all singular solutions considered in this paper.
    This paper being entirely about  exterior stability analysis, we simply state our data assumptions as conjectured interior stability results.
    	
	\begin{figure}[htbp]
		\centering
		\includegraphics[width=0.7\textwidth]{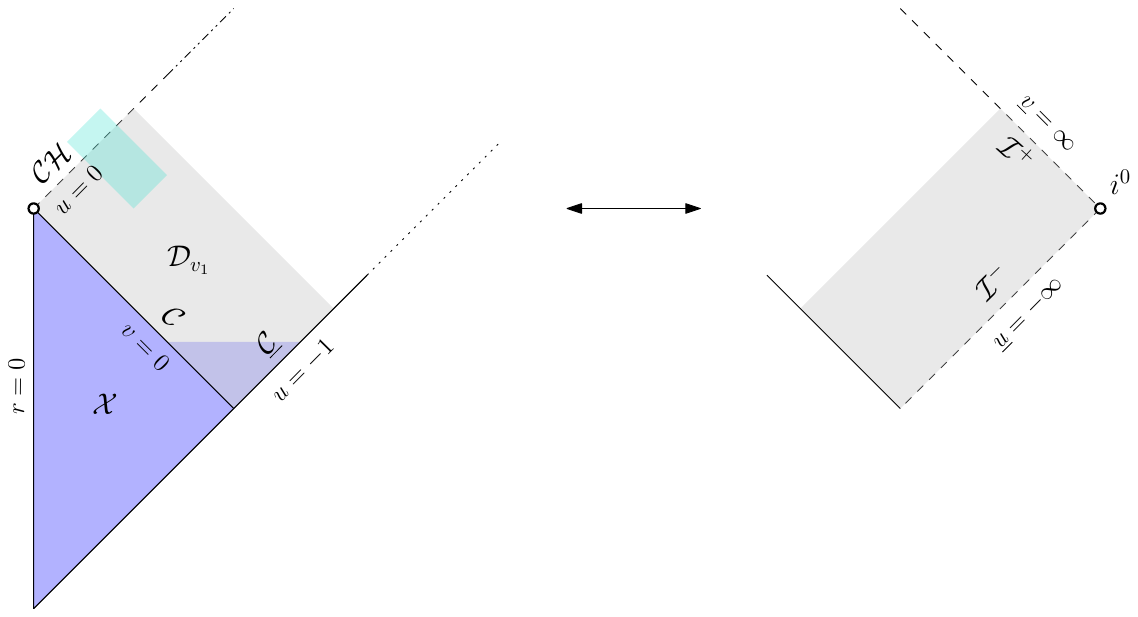}
		\caption{On the left: We assume the existence of a solution in $\mathcal{X}=(\{v\leq 0\}\cup \{t\leq -0.9\})\cap\{-1\leq u<0\}$ forming a singularity at $u=0=v$. We then show that under suitable assumptions, the solution can be extended to all of $\mathcal{D}_{v_1}$ (shaded in grey) and, in fact, can be extended beyond the Cauchy horizon. 
        The crucial observation of this paper is that a coordinate transformation transforms this problem into a scattering problem from $\scrim$ to $\scrip$ on the Minkowski spacetime, depicted on the right.}
		\label{fig:exterior}
	\end{figure}
    

	
	The remainder of this introduction is structured as follows: We recall some of the literature on the study of the interior problem in \cref{in:sec:interior}. In particular, we motivate the assumptions that we place on the initial data for \cref{in:eq:characteristic}.
   In \cref{in:sec:link}, we then explain the coordinate transformation that connects the results of \cite{kadar_scattering_2025} to the problem \cref{in:eq:characteristic}. We state our main result in \cref{in:sec:main}.

	\subsection{Description of singular solutions in the interior solutions}\label{in:sec:interior}
	Finite time singularity formation for \cref{in:eq:nonlin} and \cref{in:eq:wave_map} being an extremely large subject, we only focus on those results directly relevant to our main result; see \cite{raphael_quantized_2014,duyckaerts_soliton_2023} for more background and \cite{biernat_non-self-similar_2015} for a comprehensive heuristic discussion.
	In particular, we only concentrate on \emph{point}-like singularity formation, see \cite{merle_determination_2003,merle_openness_2008,bizon_self-similar_2010,merle_existence_2012,cote_construction_2013,donninger_nonlinear_2010,kichenassamy_fuchsian_2007,donninger_blowup_2016} for other types of singularities.
	
	\textbf{We begin with Type I, i.e. self-similar, singularity formation:}
    The solutions in this part are of the form \cref{in:eq:self_similar}, where $c$ is chosen such that the nonlinearity and $\Box\phi$ in \cref{in:eq:nonlin,in:eq:wave_map} have the same scaling behaviour towards $\{r=t=0\}$.
    Therefore, in the exterior region, our solutions will take the form $\phi=\phi_0+\bar\phi$ for some perturbation $\bar\phi$, where the existence of $\phi_0$ is provided by:
    
	\begin{theorem}[Type I blow-up for  \cref{in:eq:wave_map} \cite{shatah_weak_1988,biernat_generic_2015} and \cref{in:eq:nonlin} \cite{glogic_co-dimension_2021}]\label{in:thm:self_similar_existence}
    ~
        \begin{enumerate}[label=(\alph*),start=1]
            \item For any $d\geq 3$ and $K=1$, there exist solutions $\phi_0$ to \cref{in:eq:wave_map_k} of the form \cref{in:eq:self_similar} with $\Phi_0$ smooth on $[0,3/2]$ and $c=0$.
            \item For any $d\geq 5$ and for $p=3$,\footnote{We expect the same to hold also for $d=3$ and $(p-7)\in2\N$ based on \cite{bizon_self-similar_2007}, however \cite{bizon_self-similar_2007} does not discuss the behaviour of the self-similar solution at the forward lightcone $\{r/t=1\}$, so further analysis would be required. Note that not all self similar solutions are smooth up to the Cauchy horizon, see \cite{bizon_self-similar_2010}.} there exist solutions $\phi_0$ to \cref{in:eq:nonlin} of the form \cref{in:eq:self_similar} with $\Phi_0$ smooth on $[0,1]$ and $c=1$.
        \end{enumerate}
		
	\end{theorem}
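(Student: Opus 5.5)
The statement is an existence result quoted from the literature. My plan is to reduce it, in both cases, to an ODE boundary-value problem for the self-similar profile. I would then construct a smooth solution of that problem through the singular point at the backward light cone, and check smoothness up to $y=3/2$ (resp.\ $y=1$) afterwards.

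\textbf{Reduction to an ODE.} Insert the ansatz \cref{in:eq:self_similar} into \cref{in:eq:wave_map_k} with $c=0$, respectively into \cref{in:eq:nonlin} with $p=3$, $c=1$. Since $\phi_0$ depends only on $r$ and $t$, the d'Alembertian is $-\partial_t^2+\partial_r^2+\frac{d-1}{r}\partial_r$. Rather than using the variable $y=r/(2r-t)$ directly, I would first pass to the standard similarity variable $\rho=r/(-t)$, valid for $t<0$. The relation is $y=\rho/(2\rho+1)$, so $\rho\in[0,1]$ corresponds to $y\in[0,1/3]$; the range $y\in[1/3,3/2]$ covers $t\ge0$ and is handled at the end. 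In the variable $\rho$ one obtains the classical equations:
\begin{itemize}
\item for the wave map, $(1-\rho^2)f''+\big(\tfrac{d-1}{\rho}-2\rho\big)f'-\tfrac{(d-1)\sin 2f}{2\rho^2}=0$;
\item for the cubic equation, with $\phi_0=(-t)^{-1}g(\rho)$, the equation $(1-\rho^2)g''+\big(\tfrac{d-1}{\rho}-4\rho\big)g'-2g+g^3=0$.
\end{itemize}
Both ODEs are singular at $\rho=0$ and at $\rho=1$, the backward light cone.

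\textbf{Existence of a smooth solution on $[0,1]$.} For the wave map with $d\ge3$ I would use Shatah's variational/explicit approach. For $d=3$ the explicit solution $f=2\arctan\rho$ already works. For general $d\ge3$ there is the explicit Turok--Spergel type family $f(\rho)=2\arctan(\rho/\sqrt{d-2})$. I would verify directly that it solves the ODE, so case (a) reduces to a computation.

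For (b) with $d\ge5$, I would try first an explicit ansatz $g(\rho)=a/(b+\rho^2)$ with constants fitted to the equation; such rational profiles are known for energy-supercritical cubic waves. Alternatively, one can note that the trivial solution $g=\sqrt2$ is self-similar and smooth. The intended theorem may well be about a nontrivial profile, though. If no explicit form exists, I would do a shooting argument: use Frobenius-type local solutions at $\rho=0$ (regular branch, one parameter) and at $\rho=1$ (smooth branch, one parameter), then match by a continuity/degree argument. \textbf{This matching is the main obstacle.} It is why \cite{glogic_co-dimension_2021} is cited, and I would lean on their construction.

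\textbf{Extension past the backward light cone.} Smoothness of $\Phi_0$ past $y=1/3$ into $t\ge0$ up to the forward cone is the remaining issue. In the $y$ variable the equation is regular across $y=1/3$, since hyperbolic similarity coordinates are smooth there. For the explicit profiles, I would rewrite them in terms of $y$: for instance $\rho=y/(1-2y)$ gives $f=2\arctan\big(y/(\sqrt{d-2}(1-2y))\big)$. This is analytic on $[0,3/2]$ once one handles $y=1/2$, which corresponds to $t=\infty$; there the angle $f$ passes through $\pi$ smoothly, since $\arctan$ of $x\to\pm\infty$ glues modulo $\pi$. I would verify this by writing $f=2\,\mathrm{arccot}$ form near $y=1/2$.

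For (b) the corresponding statement is only claimed on $[0,1]$, and it follows from analyticity of the ODE solution away from its singular points. I would check that the only singular points in $y$ are $y=0$ and the image of $\rho=\pm1$.
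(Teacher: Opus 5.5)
Part (a) is correct, and it is what the cited literature provides; the paper itself gives no proof, only the citations and the remark that explicit formulae exist. The profile $f=2\arctan(\rho/\sqrt{d-2})$ does solve your ODE for every $d\ge3$. The continuous branch $\Phi_0(y)=\pi-2\arctan\big(\sqrt{d-2}\,(1-2y)/y\big)$, combined with $2\arctan\big(y/(\sqrt{d-2}(1-2y))\big)$ near $y=0$, is smooth on $[0,3/2]$.

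Some of your bookkeeping is off, though:
\begin{itemize}
\item $y=1/2$ is the hyperplane $t=0$ (i.e.\ $\rho=\infty$), not $t=\infty$.
\item $y\in[1/3,1/2)$ is the region $-r\le t<0$ between $\C$ and $\{t=0\}$, not $t\ge0$.
\item The ODE in $y$ is \emph{not} regular at $y=1/3$. The map $\rho\mapsto y$ is a diffeomorphism and $\{y=1/3\}=\C$ is characteristic, so $y=1/3$ is a regular singular point, as your own last paragraph acknowledges.
\end{itemize}
In (a) none of this matters, because you verify an explicit formula.

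Part (b), as written, has a genuine gap. The fallback $g=\sqrt2$ is not admissible. It gives $\Phi_0(y)=\sqrt2/(1-2y)$, which blows up on all of $\{t=0\}$; this is the spacelike ODE blow-up the paper explicitly excludes.

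The shooting route leaves open exactly what the statement asserts. Your closing step, ``analyticity away from the singular points'', cannot give smoothness on $[0,1]$, because your own list of singular points contains $y=1/3$ and the endpoint $y=1$ (the forward cone, the image of $\rho=-1$). At $\rho=-1$ the indicial exponents are $0$ and $(d-3)/2$. A profile continued from inside the backward cone may therefore carry a $(1-y)^{(d-3)/2}$ or logarithmic component at the Cauchy horizon. This is precisely the caveat in the footnote to the statement.

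Your first idea closes (b) once carried out. Insert $g=A/(B+\rho^2)$. The $\rho^4$ terms cancel, the $\rho^2$ terms force $B=(d-4)/3$, and the constant terms force $A^2=2B(d+B)=\frac{8}{9}(d-1)(d-4)$. Hence
\[
\phi_0=\frac{-2\sqrt{2(d-1)(d-4)}\,t}{(d-4)t^2+3r^2},\qquad \Phi_0(y)=\frac{2\sqrt{2(d-1)(d-4)}\,(1-2y)}{(d-4)(1-2y)^2+3y^2}.
\]
For $d\ge5$ the denominator has no real zeros. So $\Phi_0$ is real-analytic on all of $\R$, in particular on $[0,1]$, and no analysis at either light cone is needed. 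This also explains the hypothesis $d\ge5$: for $d\le4$ the ansatz gives $A^2\le0$, so there is no nontrivial real profile of this form. This is the explicit solution of \cite{glogic_co-dimension_2021}.
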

	There are even explicit formulae for the $\Phi_0$ as given in \cite{biernat_generic_2015}.
	Beyond their existence, (some of) these exactly self-similar solutions have been shown to be \textit{stable} inside the past lightcone:
	\begin{theorem}[Stability of Type I blow-up solutions \cite{donninger_stable_2011,costin_proof_2016,costin_mode_2017,biernat_hyperboloidal_2021,glogic_co-dimension_2021}]\label{in:thm:self_similar_stability}\label{thm:12}
		The solutions of \cref{in:thm:self_similar_existence} a) are stable in corotational symmetry\footnote{See \cite{weissenbacher_mode_2025} for a linear stability result outside symmetry.} with respect to smooth perturbations inside the backward lightcone, up to time translation of the point of blow up.

        b) The solutions of \cref{in:thm:self_similar_existence} b) for $d=7$ are co-dimension one stable with respect to smooth perturbations in the backward lightcone, up to spacetime translation of the point of blow up.
	\end{theorem}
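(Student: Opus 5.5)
The plan follows the now-standard similarity-variable approach of Donninger and collaborators, carried out separately for (a) and (b) but with the same skeleton. First, rewrite the equation inside the backward lightcone of the blow-up point $(T,x_0)$ in self-similar coordinates $\tau=-\log(T-t)$, $\xi=(x-x_0)/(T-t)$. Alternatively, following \cite{biernat_hyperboloidal_2021}, use hyperboloidal similarity coordinates so that the domain extends slightly beyond the lightcone and no boundary conditions are needed. In these variables $\phi_0$ becomes a static solution $\Phi_0(\xi)$, up to the scaling factor $(T-t)^{-c}$. Writing $\phi=\phi_0+\psi$ in first-order form $\Psi=(\psi,\partial_\tau\psi)$, we get an evolution equation
\begin{equation*}
\partial_\tau\Psi = \mathbf{L}_0\Psi + \mathbf{L}'\Psi + \mathbf{N}(\Psi),
\end{equation*}
where $\mathbf{L}_0$ is the free wave operator in similarity variables, $\mathbf{L}'$ is the potential obtained from linearising at $\Phi_0$, and $\mathbf{N}$ is the nonlinear remainder. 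For (a) we work in corotational symmetry, so this reduces to a radial problem with the effective dimension $d+2K$. For (b) the problem is not symmetry-reduced, but $\Phi_0$ is radial.

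The linear theory comes next. Choose a Sobolev space $\mathcal{H}$ of sufficiently high regularity on the unit ball (or slightly larger). Show by energy estimates and the Lumer--Phillips theorem that $\mathbf{L}_0$ generates a semigroup with growth bound $-\omega_0<0$. Then show that $\mathbf{L}'$ is compact, since it has a smooth coefficient and gains a derivative, so the essential spectrum of $\mathbf{L}=\mathbf{L}_0+\mathbf{L}'$ stays in $\{\Re\lambda\leq-\omega_0\}$. The spectrum in $\{\Re\lambda>-\omega_0\}$ then consists of finitely many eigenvalues of finite multiplicity. The symmetries of the equation produce explicit eigenvalues: time translation gives $\lambda=1$, spatial translations give $\lambda=1$ in (b), and Lorentz boosts give $\lambda=0$ in (b). In (b), the $\Phi_0$ of \cite{glogic_co-dimension_2021} in addition has exactly one genuine unstable eigenvalue, which is the source of the co-dimension one condition. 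The key remaining input is \emph{mode stability}: there are no other eigenvalues with $\Re\lambda\geq0$. I expect this to be the main obstacle. For (a), the eigenvalue problem is a Heun-type ODE in $\rho=|\xi|$, and I would follow \cite{costin_proof_2016,costin_mode_2017}. The steps would be: factor out the known symmetry mode via a supersymmetric removal; convert the condition of analyticity at both singular points $\rho=0,1$ into a three-term recurrence for the Frobenius coefficients; and use quasi-solutions with rigorous error bounds to show the recurrence has no minimal solution for $\Re\lambda\geq0$. For (b) with $d=7$, the analogous argument is done mode-by-mode in spherical harmonics, as in \cite{glogic_co-dimension_2021}. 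Once mode stability holds, the Riesz projection $\mathbf{P}$ onto the symmetry (and unstable) eigenspaces gives the decay estimate $\|e^{\tau\mathbf{L}}(1-\mathbf{P})\|\lesssim e^{-\omega\tau}$ for some $\omega>0$. Proving this needs a uniform resolvent bound on vertical lines to turn spectral information into a growth bound; this is harmless here because the perturbation is compact and the essential spectral bound is already strict.

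Finally, run the nonlinear argument. The space $\mathcal{H}$ is chosen above the scaling-critical regularity, so it is an algebra and $\mathbf{N}$ is locally Lipschitz with quadratic smallness. Solve the Duhamel formula in the weighted space $\sup_\tau e^{\omega\tau}\|\Psi(\tau)\|_{\mathcal{H}}$ using a Lyapunov--Perron correction: subtract the term $e^{\tau\mathbf{L}}\mathbf{P}$ applied to the data plus the integrated Duhamel term. Then, by a Brouwer or implicit-function argument, choose the symmetry parameters $T$ (and $x_0$ and the boost in (b)) so that the correction vanishes. In (b) the genuinely unstable direction cannot be removed by a symmetry, so it is killed by restricting the data to a co-dimension one Lipschitz manifold. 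Undoing the coordinate change yields convergence to the modulated $\phi_0^{T,x_0}$ inside the backward lightcone, as stated.
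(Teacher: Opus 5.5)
The paper does not prove this theorem; it quotes it from the cited works. Your outline is essentially the route taken there: similarity (or hyperboloidal similarity) coordinates, semigroup generation plus a compact perturbation to confine the essential spectrum, mode stability as the key input, Riesz projections onto the symmetry and unstable modes, and a Lyapunov--Perron argument with modulation and, in (b), a co-dimension one restriction.

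One inconsistency to fix: in (b) you also modulate the Lorentz-boost parameters to handle the $\lambda=0$ modes, which is the right thing to do to keep co-dimension one. But then your argument gives convergence to a translated \emph{and boosted} copy of $\phi_0$, not to $\phi_0^{T,x_0}$ as your final sentence asserts. Your conclusion is therefore stability up to translations and boosts, which is weaker than the statement's ``up to spacetime translation'' as literally phrased.
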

    
	While \cref{in:thm:self_similar_stability} comes with certain quantitative control, we actually require more precise control in order to apply our main result; in particular, we require regularity with respect to the vector fields $u\partial_u, x_i\partial_{x_j}-x_j\partial_{x_i}$ (i.e.~conormal regularity \textit{along $\C$}), and, moreover, with respect to the vector field $r\partial_t$ (which corresponds to conormal regularity \textit{in the interior} of $\C$; see already \cref{rem:in:relevance}).\footnote{Here, and throughout the rest of the paper, a $\pu$ or $\pv$ derivative is always meant to be evaluated in double null coordinates, whereas $\partial_t$ and $\partial_{x_j}$-derivatives are always meant to be evaluated in Cartesian coordinates, and $\partial_r$ is meant to be evaluated in $(t,r, x/|x|)$-coordinates.} 
We state this regularity as conjecture for the perturbed solutions $\phi$, assuming they are all renormalised to a time of singularity formation $t=0$:
	\begin{conjecture}[Type I: Stability with additional conjectured regularity]\label{in:conj:type1}
    	All existence statements below refer to existence in the region $\mathcal{X}=(\{v\leq 0\}\cup \{t\leq -0.9\})\cap\{-1\leq u<0\}$.
        Let  $\phi$ denote a solution of \cref{in:thm:self_similar_stability} arising from smooth initial data.

    There exists $\epsilon>0$ (depending on the equation), such that for any $k\in\N$ the solutions satisfy
		\begin{enumerate}[label=(\alph*)]
			\item \label{type1}  for \cref{in:eq:wave_map_k} with $d\geq3$ as in \cref{in:thm:self_similar_stability} (a) for any $0\leq |\alpha|\leq k$:\begin{equation}\label{in:eq:con_rates}
				\left|\{u\partial_u, x_i\partial_{x_j}-x_j\partial_{x_i},r\partial_t,1\}^\alpha(\phi-\phi_0)\right||_{\C}\lesssim_{|\alpha|} r^{\epsilon};
            \end{equation}
			 \item for \cref{in:eq:nonlin} with $d=7$ and $p=3$ as in \cref{in:thm:self_similar_stability} (b) for any $0\leq |\alpha|\leq k$\label{type1:wave}\begin{equation}\label{in:eq:con_rates2}
				\left|\{u\partial_u, x_i\partial_{x_j}-x_j\partial_{x_i},r\partial_t,1\}^\alpha(\phi-\phi_0)\right||_{\C}\lesssim_{|\alpha|} r^{\epsilon-1}.
			\end{equation}
		\end{enumerate}
\end{conjecture}
 We refer to $\phi$ as above as a $k$-regular solution. 
    In the sequel, we will write $(\phi-\phi_0)|_{\C}=\mathcal{O}_k(r^{\epsilon})$ for functions defined in a neighbourhood of $\C$ satisfying $\{u\partial_u, x_i\partial_{x_j}-x_j\partial_{x_i},r\partial_t,1\}^\alpha(\phi-\phi_0)|_{\C}\lesssim r^\epsilon$ (or simply $\mathcal{O}(r^{\epsilon})$ if $k=\infty$) rather than \cref{in:eq:con_rates}.
    \begin{remark}[The relevance of $r\partial_t$-regularity.]\label{rem:in:relevance}
    Note that the $u\pu$ and $x_i\partial_{x_j}-x_j\partial_{x_i}$ are tangential derivatives along $\C$ measuring \textit{conormal} regularity. The $r\partial_t$ regularity, on the other hand, cannot be formulated as an assumption solely on the restriction of $\phi$ to $\C$; it should instead be viewed as an assumption coming from the behaviour of $\phi$ in the interior. 
    Consider, for instance, the wave maps equation \cref{in:eq:wave_map_k} with $K=1$: 
		\begin{equation}
			\left(-\partial_t^2+\partial_r^2+\frac{d-1}{r}\partial_r\right)\phi-\frac{d-1}{2}\frac{\sin(2\phi)}{r^2}=\left(-\partial_u\partial_v+\frac{d-1}{2r}(\partial_v-\partial_u)\right)\phi-\frac{d-1}{2}\frac{\sin(2\phi)}{r^2}.
		\end{equation} 
		Provided that $\phi|_{\C}\sim r^\epsilon$, we can restrict the above equation to $\C$ and integrate from $\Cbar\cap \C$ for $\partial_v\phi$.
		We compute that in general $r^{\frac{d-1}{2}}\partial_v\phi\sim C+r^{\frac{d-1}{2}-1+\epsilon}$, for $C=\lim_{r\to 0}r^{\frac{d-1}{2}}\pv\phi$. One similarly obtains higher-order $\pv$-derivatives. Thus, the rates \cref{in:eq:con_rates} only hold if all the limits $\lim_{r\to\infty}r^{\frac{d-1}{2}}\pv^{n}\phi$ vanish for $n\leq k$.
		This condition can be linked to regularity in the interior of $\C$ as follows: 
        If we require $\phi$, along with its $t\partial_t$ and $r\partial_r$ derivatives, to satisfy uniform $r^\epsilon$ decay rate in the region $r/t\in[-1,-1/2]$, then this implies \cref{in:eq:con_rates} (see already  \cref{scat:reverse_peeling}).
	\end{remark}
 
  \begin{remark}[Known cases of \cref{in:conj:type1}]\label{rem:known1}
    The additional regularity assumptions \cref{in:eq:con_rates,in:eq:con_rates2} are not provided by \cref{thm:12}, see for instance the restriction $k\leq 3$ in \cite[Theorem 1.1]{glogic_co-dimension_2021}. 
    We nevertheless strongly expect them for structural reasons. 
    Indeed, already in
        \cite[Theorem 1.2]{biernat_hyperboloidal_2021}, which studies \cref{in:eq:wave_map_k} for $d=3$, we see that for any $k$, there exist solutions satisfying \cref{in:eq:con_rates} for any $|\alpha|\leq k$.\footnote{In fact, the main result of \cite{biernat_hyperboloidal_2021} does not quite show \cref{in:eq:con_rates} for all $k\geq1$ and thus does not give: smooth data implies infinite $\{t\partial_x,t\partial_t\}$ regularity. Instead, for any $m\in\N_{\geq8}$, they show that $H^m$ perturbations lead to \cref{in:eq:con_rates} with up to $k=m-3$. We expect that a slight modification of their argument can lead to the propagation of regularity producing solutions that satisfy \cref{in:eq:con_rates} for any $k$.} Since our main result \cref{in:thm:main} only requires finite regularity, it is therefore unconditional in this case.
    \end{remark}

	\paragraph{Next, we consider \textit{energy-critical} Type II singularity formation:}
    In contrast to the self-similar Type I singularities discussed before,  the singularities in this part have two different relevant length scales.
    On scales where $r\sim t^\nu$ for some $\nu>1$ (i.e.~inside the light cone $\C$), the solution takes the form $t^{c\nu}W(x/t^\nu)$, where the profile $W$ solves the stationary problem $P[W(x)]=0$, and $c$ is determined by scaling.
    In the region $r\sim t$, the scale critical norm goes to 0, that is, $\phi$ is approximated by $\phi_0=0$.
    Therefore, in the exterior region, our solutions will take the form $\phi=0+\bar\phi$ for some perturbation $\bar\phi$.
    
    The perturbations of \cref{in:conj:type1} are  smooth across $\C$ because the data for the background $\Phi_0$ and the perturbation is posed on a Cauchy hypersurface, and regularity is propagated inside the domain of existence.\footnote{We contrast such smooth self-similar solutions with the celebrated result of Christodoulou \cite{christodoulou_examples_1994}, where finite regularity Type I singularities have been constructed for the Einstein scalar field system.} 
    This is not always the case for Type II singularity formation.
	The reason is that for faster than self-similar concentrations, the two main methods to construct the solution yield qualitatively different features:
	The \emph{forward} approach---as pioneered by \cite{rodnianski_formation_2010,raphael_stable_2012}---yields solutions that are smooth across $\C$, while the \emph{backwards} approach---going back to \cite{krieger_renormalization_2008}---yields limited $C^\nu$-regularity across $\C$ for some finite $\nu\in\R$.
	In the latter work, the authors do not  explicitly keep track of the precise regularity, but an inspection of the proof yields that the solution is {\textit{conormal}} \textit{across} $\C$, i.e.~it is infinitely regular with respect to the vector fields $\{v\partial_v,x_i\partial_{x_j}-x_j\partial_{x_i}\}$, and moreover w.r.t.~$t\partial_u$.\footnote{An example of a conormal function along $\Cbar$ that is only of regularity $C^{\nu}$ with $\nu\notin\N$ is $c_0+c_1v+\ldots+c_{\floor{\nu}}v^{\floor{\nu}}+v^\nu$ for $c_i\in\R$.}
	
   Below, we quote various results on Type II singularity formation. As before,, we write them as conjectures because the cited results do not always include explicit decay rates nor mention conormality. In particular, only some of the works cited below include high $r\partial_x, r\partial_t$-regularity along $\C$:
    
	\begin{conjecture}[Type II singularity formation, energy-critical]\label{in:conj:type2}
		All existence statements below refer to existence in the region $\mathcal{X}=(\{v\leq 0\}\cup \{t\leq -0.9\})\cap\{-1\leq u<0\}$.
		\begin{enumerate}[label=(\alph*),start=3]
			\item \label{wm:d=2}
            For any $\nu>1$, there exists $\phi$ solving \cref{in:eq:wave_map_k} with $d=2, \, K=1$ s.t.~$\phi|_{\C}=\O\big(r^{\nu-1}\big)$ and s.t.~$\phi$ is $C^{\nu-1/2-}$ across $\C$ with infinite conormal regularity \cite{krieger_renormalization_2008,krieger_full_2014};
			\item \label{nw:d=3}For any $\nu>1$, there exists $\phi$ solving \cref{in:eq:nonlin} with $d=3$, $p=5$ s.t.~$\phi|_{\C}=\O(r^{\frac{\nu-2}{2}})$ and $\phi$ is $C^{\frac{\nu}{2}-}$ across $\C$ with infinite conormal regularity
			\cite{krieger_slow_2009,kadar_smooth_2026};
			\item \label{nw:d=4}
            For any $\nu>1$, there exists $\phi$ solving \cref{in:eq:nonlin} with $d=4$, $p=3$ s.t.~$\phi|_{\C}=\O(r^{\nu-2})$ and s.t.~$\phi$ is $C^{\frac{\nu}{2}-}$ across $\C$ with infinite conormal regularity \cite{samuelian_construction_2024};
			\item \label{wm:d=2_poly} 
            For any $\nu\in\N_{\geq 2}$, there exists
        $\phi$ solving \cref{in:eq:wave_map_k} with $d=2$ and $K=1$ s.t.~$\phi|_{\C}=\O(r^{\nu})$  and is smooth across  $\C$ \cite{jeong_quantized_2025};\footnote{We ignored extra logarithmic factors appearing, as the upper bounds suffices for the discussion here.}
			\item \label{wm:d=2_log}
            There exists $\phi$ solving \cref{in:eq:wave_map_k} with $d=2$ and $K=1$ s.t.~$\phi|_{\C}=\O\big(e^{-\sqrt{|\log(r)|}}\big)$ and s.t.~$\phi$ is smooth across $\C$ \cite{raphael_stable_2012,kim_sharp_2023};
            \item \label{wm:d=2_log_k}
            There exists $\phi$ solving \cref{in:eq:wave_map_k} with $d=2$ and $k\geq2$ s.t.~$\phi|_{\C}=\O\big(|\log(r)|^{\frac{-1}{2k-2}} \big)$ and s.t.~$\phi$ is smooth across $\C$ \cite{raphael_stable_2012}; 
			\item  \label{nw:d=4_log}
            There exists $\phi$ solving \cref{in:eq:nonlin} with $d=4$, $p=3$, $\phi|_{\C}=\O(r^{-1}e^{-\sqrt{|\log(r)|}+O(1)})$ and s.t.~$\phi$ is smooth across $\C$ \cite{hillairet_smooth_2012}.
		\end{enumerate}
		The solutions above are smooth in the interior of $\C$, i.e.~for $v<0$.
        Moreover, their scale critical norm on $\{t=-\tau,\abs{r}\leq \tau\}$ remains bounded away from 0 as $\tau\to0$.

        Finally, in all statements, smoothness can be replaced by $C^k$, infinite conormal regularity across $\C$ can be replaced by conormal regularity of degree $k$, and the notation $\mathcal{O}(\dots)$ introduced below \cref{in:conj:type1} can be replaced by $\mathcal{O}_{k}(\dots)$. We then refer to the conjectured solutions as "$k$-regular solutions".
	\end{conjecture}
    \begin{remark}[Explanation of the rates along $\C$]
   
In all cases above, the rates along $\C$ are obtained by assuming the leading-order rate to be the same as the tail of the soliton evaluated at the concentration speed.
    We explain this via the example of \cref{wm:d=2}:
		In \cite{krieger_renormalization_2008,krieger_full_2014}, the authors consider singularity formation where the solution of \cref{in:eq:wave_map_k} decomposes as
		\begin{equation}
			\phi=2\arctan (r/t^\nu)+\epsilon.
		\end{equation}
	Here $\epsilon$ is an error term that we ignore for the heuristic. We then use that $2\arctan(r/t^{\nu})|_{\C}=\pi+O(r^{\nu-1})$ to obtain \cref{wm:d=2}.
	\end{remark}

    \begin{remark}[Local well-posedness]\label{rem:local}
        Note that the reference of \cref{in:conj:type2} all show convergence in the energy topology $\dot{H}^1\times L^2$ on constant $t$ slices.
        Therefore, existence of the Cauchy horizon already follows from the local existence in the critical norm. In these cases, the contribution of this paper is then to provide a sharper description of the solution, see \cref{rem:sharP}.
    \end{remark}
    
    \begin{remark}[Known cases of \cref{in:conj:type2}]\label{rem:known2}
         As in \cref{rem:known1}, the specific $\{r\partial_x,r\partial_t\}$-regularity encoded in the $\O$-notation has not yet been proved in all cases. It has been proved verbatim in \cite{kadar_smooth_2026} in case \cref{nw:d=3}.
       There are further cases where the results available in the literature suffice for \cref{in:thm:main} to apply.
        For instance, consider \cref{wm:d=2}. In \cite{krieger_renormalization_2008}, the authors write the solution $\phi=\phi_N+\epsilon_\alpha$, where they construct $\phi_N$ explicitly infinitely regular with respect to $\{v\partial_v,u\partial_u\}$  and $\epsilon$ satisfying the bound
        $\norm{\epsilon_\alpha}_{H^\alpha}\lesssim t^{\nu(N-2)}$ for any $\abs{\alpha}\in(1/4,\nu/2)$.
        Moreover, following the symbolic bounds as in \cite[Proposition 4.7]{krieger_renormalization_2008}, we expect that $\epsilon$ already satisfies conormality across $\C$.
        Similar commentary applies to \cref{nw:d=4}.

        Therefore, in these three cases, our \cref{in:thm:main} applies unconditionally.
    \end{remark}

   \begin{remark}[Regularity across $\C$: Conormality vs Smoothness]
		Notice that while the solutions of \cref{wm:d=2_poly,wm:d=2_log,wm:d=2_log_k,nw:d=4_log,wm:d>7,nw:supercritical} as well as \cref{type1,type1:wave} all are smooth across $\C$, \cref{wm:d=2,nw:d=3,nw:d=4} are not. 
        With more detail given in the main body, let us briefly discuss the importance of the regularity across $\C$ through the example of \cref{nw:d=4}.
		For our main result \cref{in:thm:main}, we  need to show that $\phi\sim\mathcal{O}(r^{a_0})$, with $a_0>-1$, in the region $\{t/r\in(-1/2,1/2)\}$.
		This requires $(r\partial_t)$-regularity and a corresponding decay rate along $\C$ in order to construct an approximate solution near $\C$ and peel off sufficiently many terms along $\Cbar$.
        In turn, this requires that the regularity across $\Cbar$ is larger than $a_0+\frac{d-1}{2}$; we can only deduce this rate if the regularity across $\C$ is higher than $a_0+\frac{d-1}{2}$. 
        We will show that these conditions are always satisfied in all the cases above.
	\end{remark}
    
	\begin{remark}[Polynomial vs logarithmic perturbation]
		For the settings \cref{wm:d=2_log,wm:d=2_log_k,nw:d=4_log}, we cannot directly apply the results of \cite{kadar_scattering_2025}. 
        We nevertheless expect that the techniques of \cite{kadar_scattering_2025} can be slightly extended to also apply to these cases; see \cref{applications:rem} for further discussion.
	\end{remark}


    \paragraph{Finally, we consider \textit{energy-supercritical} Type II singularity formation:}
    
    As for the case of energy-critical Type II problems, there are two scales of the singularity formation, $r\sim t^\nu$ for some $\nu>1$ and $r\sim t$.
    In the region where $r\sim t^\nu$, $\phi$ is, as before, to leading order the stationary profile $t^{c\nu}W(x/t^\nu)$ solving $P[W(x)]=0$, while in the region with $r\sim t$,  the leading order behaviour is a non-trivial self-similar profile $\phi_0$ of the form \cref{in:eq:self_similar} with $\Phi_0$ \emph{singular at the origin}.
    This singular (at $r=0$) solution is $c_{p,d}r^{-\frac{2}{p-1}}$ with $\frac{p-1}{2}c_{p,d}^{p-1}=d-2-\frac{2}{p-1}$ for \cref{in:eq:nonlin}, and the equatorial map $\phi=\frac{\pi}{2}$ for \cref{in:eq:wave_map_k}.

    \begin{conjecture}[Type II singularity formation, energy-supercritical]\label{in:conj:type2_sup}
		All existence statements below refer to existence in the region $\mathcal{X}=(\{v\leq 0\}\cup \{t\leq -0.9\})\cap\{-1\leq u<0\}$.
		\begin{enumerate}[label=(\alph*),start=10]
			\item \label{wm:d>7}
            For $\gamma=\frac{1}{2}(d-2-\sqrt{d^2-8d+8})$ and any $l\in\N_{>\gamma}$ there exists
            $\phi$ solving \cref{in:eq:wave_map_k} with $K=1$, $d\geq7$, such that $\phi|_{\C}=\frac{\pi}{2}+\O(r^{l-\gamma})$ with $\phi$ smooth across $\C$ \cite{ghoul_construction_2018};
			\item \label{nw:supercritical} 
            For $d\geq11$ and $p>1+\frac{4}{d-4-2\sqrt{d-1}}$ fix $\alpha_{d,p}>0$ as in \cite[Theorem 1.1] {collot_type_2018} and $l\in\N_{>\alpha}$, then there exists $\phi$ solving
            \cref{in:eq:nonlin} such that $\phi|_{\C}=c_{p,d}r^{-\frac{2}{p-1}}+ \O(r^{l-\alpha-\frac{2}{p-1}})$ and smooth across $\C$ \cite{collot_type_2018}.
		\end{enumerate}
		The solutions above are smooth in the interior of $\C$, i.e.~for $v<0$.
        Moreover, their scale critical norm on $\{t=-\tau,\abs{r}\leq \tau\}$ is bounded from below as $\tau\to0$.

        Finally, in all statements, smoothness can be replaced by $C^k$ and the notation $\mathcal{O}(\dots)$ introduced below \cref{in:conj:type1} can be replaced by $\mathcal{O}_{k}(\dots)$. We then refer to the conjectured solutions as ``$k$-regular solutions''.
	\end{conjecture}
\begin{remark}[Known cases for \cref{in:conj:type2_sup}]\label{rem:known3}
In contrast to the previous two conjectures and \cref{rem:known1,rem:known2}, we are not aware of any results in the literature providing the required $\{r\partial_t\}$-regularity along $\C$ captured in the $\O$-notation. 
We nevertheless expect this regularity to hold for structural reasons.
\end{remark}
\begin{remark}
 \label{ref:rem:remmidemmi}   
 For the solutions of \cref{in:conj:type2_sup}, it is not known a priori that the singularities are indeed isolated. This should be contrasted to the energy-critical case, where this can be inferred from a local existence argument (see \cref{rem:local}). Therefore, it is only after our main result \cref{in:thm:main} that the singularities can be inferred to be isolated.
\end{remark}

    \subsection{Relation to the global scattering result \texorpdfstring{\cite{kadar_scattering_2025}}{[KK25]} }\label{in:sec:link}
    The main observation of this paper is that, given the conjectural assumptions of \cref{in:conj:type1,in:conj:type2,in:conj:type2_sup} as input for  \cref{in:eq:characteristic}, we can solve \cref{in:eq:characteristic} via an application of the semi-global scattering results of \cite{kadar_scattering_2025}.
    More precisely, by considering the change of coordinates
    \begin{equation}
        (\underline{u},\underline{v})=\Phi(u,v)=(-v^{-1},-u^{-1})
    \end{equation}
    mapping the origin ($t=0=r$) to spacelike infinity, $\C$ to past null infinity $\mathcal{I}^-$ and $\CH$ to future null infinity $\mathcal{I}^+$, we can recast \cref{in:eq:characteristic} as a semi-global scattering problem, for which existence and sharp decay and (conformal) regularity results have been treated in generality in \cite{kadar_scattering_2025}.
    See \cref{fig:exterior}.     For the interested reader, we included a brief sketch of the techniques of \cite{kadar_scattering_2025} in \cref{sec:app:sketch} tailored to the finite setting of $\D_{v_1}$.

    \begin{remark}[Long-range potentials and self-similar singularities]
        The proof of \cref{in:thm:main} follows by reducing \cref{in:eq:nonlin,in:eq:wave_map_k} to a global problem in $\{\underline{u}<-1,\underline{v}>1\}$.
        In the case of Type I and energy supercritical Type II singularities, we crucially use that  in \cite{kadar_scattering_2025}, we have already proved strong linear estimates for operators of the form
        \begin{equation}
            \Box+r^{-2}V(t/r)
        \end{equation}
        for arbitrary $V\in C^\infty$. Such control is obtained via the constant $\bar{c}$ appearing in the  computation~\cref{app:eq:divJpm}.
    \end{remark}

	\subsection{Main result: Exterior stability}\label{in:sec:main}
	We are now ready to state our main result:
	\begin{theorem}\label{in:thm:main}
		Let $\phi$ be a $k$-regular solution as in \cref{in:conj:type1,in:conj:type2,in:conj:type2_sup}, excluding \cref{wm:d=2_log_k}, and let $\phi_{\C}$ and $\phi_{\Cbar}$ denote the restrictions of $\phi$ to $\C$ and $\Cbar$, respectively. Moreover, in the case of \cref{in:eq:wave_map}, assume the solution to be $K$-corotational.
        For $k$ sufficiently large depending only on $d$, the characteristic initial value problem \cref{in:eq:characteristic} has a solution in all of $\D_{v_1}$ for $v_1$ sufficiently small, i.e.~the solution $\phi$ extends to $\D_{v_1}$, where the smallness of $v_1$ only depends on weighted norms of $\phi$ along $\C\cap\Cbar$ and on spacetime norms of the background solution $\phi_0$.

        Furthermore, in all cases, for $v_1/2<v<v_1$, the solution can be extended across the Cauchy horizon  as a weak solution. See \cref{fig:exterior}.
		
	\end{theorem}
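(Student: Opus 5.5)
We will prove \cref{in:thm:main} by reducing it to the scattering theory of \cite{kadar_scattering_2025}, via the inversion $(\underline u,\underline v)=\Phi(u,v)=(-v^{-1},-u^{-1})$ of \cref{in:sec:link}. This map sends $\C$ to $\scrim$, $\CH$ to $\scrip$, $\Cbar=\{u=-1\}$ to $\{\underline v=1\}$, and $\D_{v_1}$ to $\{\underline u<-v_1^{-1},\underline v>1\}$. Since $\Phi$ is (up to sign conventions) the conformal inversion $x^\mu\mapsto x^\mu/(x\cdot x)$ of Minkowski space, $\Box$ transforms conformally: with $\underline r=\underline v-\underline u$ we have $r\underline r\sim uv\,\underline r\cdot$(conformal weight). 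We therefore set $\psi=\Omega^{-\frac{d-2}{2}}\bar\phi\circ\Phi^{-1}$ with $\Omega=uv$ and $\bar\phi=\phi-\phi_0$. This turns \cref{in:eq:characteristic} into an equation on the Minkowski exterior region of the form
\[
\Box\psi+\underline r^{-2}V(\underline t/\underline r)\psi=N[\psi],
\]
where $V$ is smooth and comes from linearising at $\phi_0$. For Type I and energy-supercritical Type II backgrounds, \cref{in:eq:self_similar} is invariant under the inversion precisely because its homogeneity $c$ is the critical scaling. For energy-critical Type II we have $V=0$. The nonlinearity $N$ collects the higher-order terms, e.g. $\underline r^{-2}\cdot$(powers of $\psi$ times $\underline r^{\frac{d-2}{2}}$-weights) for \cref{in:eq:nonlin}, and the Taylor remainder of $\sin(2\phi)$ for \cref{in:eq:wave_map_k}. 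The first step is to carry out this computation for each case and check that $N$ is of the "short-range or critical-but-perturbative" type allowed in \cite{kadar_scattering_2025}, once decay is assumed.

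The second step translates the data assumptions. The bound $\bar\phi|_\C=\O_k(r^{a})$ from \cref{in:conj:type1,in:conj:type2,in:conj:type2_sup} becomes a weighted conormal bound for $\underline r^{\frac{d-1}{2}}\psi$ at $\scrim$. Here $u\pu\mapsto\underline v\partial_{\underline v}$ along $\scrim$, the rotations are preserved, and $r\partial_t$ becomes a combination of $\underline r\partial_{\underline t}$-type b-vector fields. This yields scattering data in the conormal spaces of \cite{kadar_scattering_2025}, decaying like $\abs{\underline u}^{-\delta}$ towards $i^0$, with $\delta>0$ determined by $a$ exceeding the threshold required. The data on $\{\underline v=1\}$ come from restricting $\phi$ to $\Cbar$, which is a compact, regular hypersurface in the interior of $\mathcal X$ away from the singularity. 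In the energy-critical Type II cases with limited $C^\nu$ regularity across $\C$, conormality across $\C$ transforms into conormality across $\{\underline u\}$-cones near $\scrim$. The point to check here, following the remark on conormality vs smoothness, is that the regularity exceeds $a_0+\frac{d-1}{2}$. This lets us peel off an approximate solution near $\scrim$ (the content of \cref{scat:reverse_peeling}) so that the remainder decays like $\underline r^{a_0}$, $a_0>-1$, in $\{\abs{\underline t}/\underline r<1/2\}$.

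The third step is to apply the semi-global existence theorem of \cite{kadar_scattering_2025} in $\{\underline u<-v_1^{-1},\underline v>1\}$. Taking $v_1$ small pushes the region towards $i^0$, where the data are small in the weighted norms, so the nonlinear bootstrap closes. The linear estimate for $\Box+\underline r^{-2}V$ with arbitrary smooth $V$, via the constant $\bar c$ in the $J^\pm$ energy identity, is what makes the Type I and supercritical cases work. The result gives existence up to $\scrip$ together with conormal regularity and a radiation field there. Pulling back gives existence in $\D_{v_1}$. For the extension across $\CH$, the radiation field at $\scrip$ has a finite-regularity conormal expansion. Pulled back, $\phi$ has a limit at $u=0$ with $\partial_v$-derivatives controlled and $\pu\phi$ at worst integrable. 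This limit serves as characteristic data for a (non-unique) local solution in $\{u>0\}$ for $v\in(v_1/2,v_1)$, which lies away from $r=0$, and the glued function is a weak solution.

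We expect the main obstacle to be the second step in the Type II cases with low regularity across $\C$ and with supercritical backgrounds. There, one must verify that the translated data meet the exact weighted-conormal hypotheses (decay rate above the critical threshold, which depends on $\bar c$ and on $V$) needed in \cite{kadar_scattering_2025}. This is also where \cref{wm:d=2_log_k} fails, since it has only logarithmic decay. We would first try to establish this by an explicit inductive construction of the approximate solution along $\Cbar$ using the transport equations on $\C$ described in \cref{rem:in:relevance}.
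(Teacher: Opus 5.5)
Your overall route is the paper's. You invert via $\Phi$ to a scattering problem from $\scrim$ to $\scrip$, peel off an approximate solution near $\C$, apply \cite{kadar_scattering_2025} with smallness gained by shrinking $v_1$, and glue a local characteristic solution across $\CH$ to get a weak solution.

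The genuine gap is in scope. Your argument needs the data to beat the admissibility threshold by a polynomial margin (decay $\abs{\underline u}^{-\delta}$ with $\delta>0$), and you attribute the exclusion of \cref{wm:d=2_log_k} to its logarithmic decay. But two cases included in the theorem have no polynomial margin either:
\cref{wm:d=2_log}, with $\phi|_{\C}=\O(e^{-\sqrt{\abs{\log r}}})$ against the wave-maps threshold $a_0>0$; and
\cref{nw:d=4_log}, with $\O(r^{-1}e^{-\sqrt{\abs{\log r}}})$ against the threshold $a_0>-1$ for $d=4$, $p=3$.
The missing observation (\cref{applications:rem,applications:rem2}) is that the gap factor $\rho_-^{-\delta}\rho_0^{-\delta}\rho_+^{-\delta}$ in \cref{scat:eq:Nquad} enters \cite{kadar_scattering_2025} only through Sobolev embedding with respect to the b-measure $\frac{\dd u}{u}$. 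It can therefore be replaced by a small factor, and what is really needed is that the square of the rate be $\frac{\dd u}{u}$-integrable along $\C$. Since $\int e^{-2\sqrt{\abs{\log r}}}\frac{\dd u}{u}<\infty$, \cite[Theorem 6.2]{kadar_scattering_2025} still gives existence in these two cases. By contrast, the square of $\abs{\log r}^{-\frac{1}{2k-2}}$ is not $\frac{\dd u}{u}$-integrable, and that is what actually rules out \cref{wm:d=2_log_k}.

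Several details also need fixing.

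\emph{Conformal weight.} The inversion pulls the Minkowski metric on $\R^{d+1}$ back to $(uv)^{-2}$ times itself. The correct rescaling is therefore $\phi^{\mathrm{g}}=\underline r^{-\frac{d-1}{2}}r^{\frac{d-1}{2}}\phi=\abs{uv}^{\frac{d-1}{2}}\phi$, not $(uv)^{-\frac{d-2}{2}}\bar\phi$; for instance, for $d=3$ it sends $1/r$ to $1/\underline r$. With your weight, $\Box$ does not become $\underline\Box$ modulo admissible terms.

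\emph{The potential for wave maps.} For energy-critical wave maps, $V\neq0$: linearising $\sin(2\phi)/r^2$ at $\phi_0\in\{0,\pi\}$ gives the inverse-square potential $(d-1)r^{-2}$. This is harmless, since any $r^{-2}V(t/r)$ is absorbed by taking $\bar c$ large in \cref{app:eq:divJpm}.

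\emph{The thresholds.} For the same reason, the decay thresholds do not depend on $V$ or $\bar c$. They come only from nonlinear admissibility:
$a_0>-\frac12$ for $d=3$, $p=5$;
$a_0>-1$ for $d=4$, $p=3$ and for the $d=7$ Type I case;
$a_0>0$ for wave maps.
These combine with the hierarchy $a_->a_0+\frac{d-1}{2}>a_+$, $a_+<0$ of \cref{scat:cor:local}.

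\emph{Peeling.} The point of peeling is exactly to reach $a_->a_0+\frac{d-1}{2}$, i.e.\ fast vanishing of the remainder towards $\C$ (that is, $\scrim$), which smooth $\Cbar$-data ($a_-=0$) never have. This is \cref{scat:peeling}: a Taylor expansion in $v$ off $\C$ to order $\floor{a_0+\frac{d-1}{2}}$, with the coefficients $\pv^j\phi|_{\C}$ obtained from the transport equations. \cref{scat:reverse_peeling}, which you cite, only provides compatible data on $\Cbar$.
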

    We emphasise that the only input into the theorem are the properties of $\phi$ given in the statements of the conjectures.
    The relevant function spaces are given in  \cref{scat:cor:main,sec:not}.
    The proof is given in \cref{sec:appandproof}.
\begin{remark}[Outside of corotational symmetry]
    We give a short sketch of how a proof outside of symmetry in the wave maps setting might proceed in  \cref{subsec:extendingadmissiblity}. (We make no symmetry assumption in the case \cref{in:eq:nonlin}.)
\end{remark}
\begin{remark}[Global Cauchy horizon]\label{rem:global}
    If we extend the interior solution of \cref{in:conj:type2} by posing data along $u=-\epsilon$ for $\epsilon\ll1$ instead of along $\Cbar$, we may also establish existence all the way up to future null infinity for energy-critical Type II singularities under suitable assumptions. See \cref{sec:3:scrip} and \cref{scat:cor:null_inf}.
\end{remark}
\begin{remark}[Sharp regularity of $\CH$]\label{rem:sharP}
   Beyond the existence of the Cauchy horizon, one may further study the precise regularity of the perturbations at $\CH$. While this is not the focus of this paper, the methods of \cite{kadar_scattering_2025} also allow to address this question. Indeed, in the case of \emph{critical} Type~II singularity formation, assuming the precise leading order behaviour $\phi_{\C}$, we can directly use the results of \cite{kadar_scattering_2025}; cf.~\cref{scat:lem:sharp} in \cref{sec:lower_bound} below which discusses the case of $d=3$, applicable to \cref{nw:d=3}. The case of other spatial dimensions (as well as the energy-supercritical case) can be dealt with similarly by using the ODE results for exact $c/r^2$-potentials of \cite[Section 10]{kadar_scattering_2025}. 

   On the other hand, for Type I singularities, one would need to perform a new ODE analysis similar to~\cite[Section 10]{kadar_scattering_2025}; these cases therefore \textit{do not} follow directly.%
\end{remark}
\begin{remark}[Extendibility of the solution]
    As is clear from \cref{rem:local}, the solutions can be extended across $\CH$ for Type II energy-critical problems for $t>0$.
    These extensions are, by a propagation of singularities argument, smooth in $\{t>0\}\setminus\CH$.
    Alternative extension with expanding solitons for $t>0$ are also  expected following \cite{jendrej_dynamics_2022}.
    Whether a selection criterion for a physically relevant extension can be obtained via an inviscid limit or a Skyrme type regularisation in the case of \cref{in:eq:wave_map}---see \cite{geba_large_2018,geba_large_2019} for global wellposedness in the latter---remains entirely open.
    For Type I and energy supercritical Type II solutions, even the extendibility has so far not been addressed, see however \cite{bizon_dispersion_2000,bonk_future_2026} for a related discussion. A selection criterion, too, remains elusive.
\end{remark}

\paragraph{Acknowledgement:}
The authors would like to thank Serban Cicortas for bringing the problem to their attention and the hospitality of the Simons Center for Geometry and Physics during the workshop on Partial Differential Equations of Classical Physics where this work was initiated.
The first author acknowledges the support of the SNSF starting grant TMSGI2 226018. The second author  acknowledges support through the ERC Starting Grant 101115568.

	\section{Norms and notation}\label{sec:not}
	
	In this section, we introduce some norms that are useful to understand solutions up to the Cauchy horizon in the region $\D_{u_1,v_1}:=\{v\in(0,v_1),u\in(u_1,0)\}$.
    We also introduce the notation $\Cbar_{u_1,v_1}=\{u=u_1, v\in(0,v_1)\}$, $\C_{u_1,v_1}=\{u\in(u_1,0),v=v_1\}$, and simply write $\Cbar_{u_1,\infty}=\Cbar_{u_1}$ as well as $\Cbar_{-1}=\Cbar$.
	We introduce
	\begin{equation}
		\rho_-=v/r,\quad\rho_0=r,\quad\rho_+=-u/r.
	\end{equation}
	We measure decay rates towards the zero set of these 3 functions.
	Let us also introduce the following set of operators acting on $C^\infty$ functions\footnote{The reader not familiar with edge vector-fields $\Ve$---as for instance in \cite{hintz_stability_2020}---may disregard all appearances of $\Ve$ on the first read,  and see their relevance in \cref{app:eq:bound}}
	\begin{equation}
		\Vb:=\{u\partial_u|_v,v\partial_v|_u,x_i\partial_{x_j}-x_j\partial_{x_i},1\},\quad \Ve:=\{u\partial_u|_v,v\partial_v|_u,\rho_+^{1/2}\rho_-^{1/2}(x_i\partial_{x_j}-x_j\partial_{x_i}),1\}.
	\end{equation}
    We use the following multi-index notation:
    \begin{notation}
        For $\mathcal{V}$ a finite set of differential operators and $X$ a function space, we write
        \begin{equation}
            \norm{\mathcal{V}^kf}_X:=\sum_{j\leq k,\Gamma_i\in\mathcal{V}}\norm{\Gamma_1...\Gamma_j f}_{X}.
        \end{equation}
    \end{notation}
	Let us also define the spaces $\A^{k;\vec{a}}$ as spaces of $C^k$ functions on $\D_{u,v}$ with the norm below finite:
		\begin{align}
			\norm{f}_{\A^{k;a_-,a_0,a_+}(\D_{u,v})}&:=\norm{w\Vb^kf}_{L^\infty(\D_{u,v})}\sim \sum_{\abs{\alpha}\leq k} \sup_{\D_{u,v}} (w\Vb^{\alpha}f),\quad w=\prod_{\bullet}\rho_\bullet^{-a_\bullet}.\label{not:eq:L_infty}
		\end{align}
	We also define function spaces on the initial data slices $\C,\Cbar$
	\begin{nalign}
		\norm{f}^2_{\A^{k;a}(\C)}&:=\norm{r^{-a}\Vb^k f}_{L^\infty(\C)};\\
		\norm{g}^2_{C^{k}(\Cbar)}&:=\norm{\{\partial_v,x_i\partial_{x_j}-x_j\partial_{x_i}\}^\alpha f}_{L^\infty(\Cbar)};\\
		\norm{g}^2_{\A^{k;a}(\Cbar)}&:=\norm{v^{-a}\Vb^k f}_{L^\infty(\Cbar)}.
	\end{nalign}
    Finally, let us write $\A^{k;\vec{a}-}=\cup_{\epsilon>0}\A^{k;\vec{a}-\epsilon}$.
    
	
	\section{The existence framework}\label{sec:local}
    In this section, we prove the existence statements relevant for ultimately proving \cref{in:thm:main}. More precisely, in \cref{subsec:3.1}, we recast the main existence result of \cite{kadar_scattering_2025}, recalled in \cref{app:thm:KK25}, as a local problem for scalar equations with a single variable. 
    In \cref{subsec:3.2}, we then also discuss the framework necessary for the extendibility across the Cauchy horizon.
    Together, these two suffice to already deduce \cref{in:thm:main} in \cref{sec:appandproof} (and the reader may therefore wish to jump there directly after having finished \cref{subsec:3.2}).  
    
    On the other hand, the results discussed in \cref{sec:3:scrip,sec:lower_bound}, pertain to \cref{rem:global,rem:sharP}, respectively, and are written to only cover the energy-critical Type II solutions of \cref{in:conj:type2}:
    In \cref{sec:3:scrip}, we discuss how to extend the solution all the way to $v=\infty$.
   In \cref{sec:lower_bound}, we discuss how to obtain the precise (ir-)regularity towards the future Cauchy horizon in certain special cases.

    \subsection{Local existence up to the Cauchy horizon \texorpdfstring{$\CH$}{CH} via \texorpdfstring{\cite{kadar_scattering_2025}}{[KK25]}}\label{subsec:3.1}
	
	Even though \cite{kadar_scattering_2025} also applies for systems, we present here a simplified statement.
	We use the convention that $\vec{b}<\vec{a}$ to mean that each component of $\vec{b}$ is smaller than $\vec{a}$, i.e. $b_-<a_-$, $b_0<a_0$, $b_+<a_+$.
	\begin{definition}\label{scat:def:admissible}
		Fix $\vec{a}\in\R^3$ and $w^{-1}=\rho_-^{a_-}\rho_0^{a_0}\rho_+^{a_+}$.
		Let $\mathcal{N}:\R_\phi\times \R^{d+1}\to\R$ be smooth in the $\phi$ variable, and smooth in $\D_{-1,\infty}\setminus \{r=0\}$.
		We say that $\mathcal{N}$ is a $k-$regular \emph{admissible nonlinearity} for $\vec{a}$ if there exists $\delta>0$, called gap, such that for any two functions $\phi_1$, $\phi_2$ with $\norm{\Ve \phi_i}_{\A^{k;\vec{a}}(\D_{1,0})}\leq C$ for $i\in\{1,2\}$, for any $j\leq k$ it holds pointwise that
        \begin{subequations}
            \begin{align}
                w\abs{\Vb^j(\mathcal{N}[\phi_2])}\rho_-^{1-\delta}\rho_0^{2-\delta}\rho_+^{1-\delta}&\lesssim_{k,C} \abs{w\Vb^j\Ve\phi_2}^2\label{scat:eq:Nquad}\\
            w\abs{\Vb^j(\mathcal{N}[\phi_1]-\mathcal{N}[\phi_1+\phi_2])}\rho_-^{1-\delta}\rho_0^{2-\delta}\rho_+^{1-\delta}&\lesssim_{k,C} \abs{w\Vb^j\Ve\phi_1}\abs{w\Vb^j\Ve\phi_2}.
            \end{align}
        \end{subequations}

		For a function $V:\R^{d+1}\to\R$, we say that it is $k-$regular \emph{admissible potential} with gap $\delta>0$ if $V\in\A^{k;\vec{a}^V}(\D_{v,0})$ for some $a_0^V\geq-2$ and $a^V_\pm\geq-1+\delta$.
	\end{definition}
	
	Let us now turn to the problems we study.
	\begin{definition}
			Fix $k\in\N$, $a_0\in\R$ and $V,\mathcal{N}$ $\infty$ regular admissible with respect to the weight $(0,a_0,0)$.
			We say that $(\phi_{\C},\phi_{\Cbar})$ are \emph{compatible} initial data for the characteristic initial value problem on $\C\cup\Cbar$ 
			\begin{equation}\label{loc:eq:characteristic}
				\Box\phi=\mathcal{N}(\phi)+V\phi+f,\quad \phi|_{\C}=\phi_{\C},\, \phi|_{\Cbar}=\phi_{\Cbar},
			\end{equation}
			with \emph{decay rate $a_0$ and regularity $k$} if $T^l\phi|_{\Cbar}\in C^{k-l}(\Cbar)$ and $(rT)^l\phi|_{\C}\in\A^{k-l;a_0}(\C)$ for $l\leq k$, where the transversal derivatives of $\phi$ satisfy transport equations along $\C,\Cbar$ and match at the intersection $\Cbar\cap\C$.
			
			We say that $(\phi|_{\C},\phi|_{\Cbar})$ are \emph{conormal compatible} initial data with \emph{decay rate $a_0$ and regularity $k$} if the inclusions $T^l\phi|_{\Cbar}\in C^{k-l}(\Cbar)+\A^{k-l;a_0+\frac{d-1}{2}-l+\delta}(\Cbar)$ and $(rT)^l\phi|_{\C}\in\A^{k-l;a_0}(\C)$ hold for all $l\leq \min({a_0+\frac{d-1}{2}},k)$ and some $\delta>0$.
	\end{definition}

    Provided a solution exists in the region $\mathcal{X}$---as in \cref{in:conj:type2}---the above definition is related to the uniform $a_0$ decay rate  of $\phi$ in the region $\D_{\mathrm{in}}=\{v/r\in[-1/10,0],t>-1\}$.
    More precisely, let us define conormal and smooth function spaces via the norms
    \begin{nalign}
        \norm{f}_{\A^{k;a_0,a_-}(\D_{\mathrm{in}})}&:=\sum_{\abs{\alpha}\leq k}\sup_{\D_{\mathrm{in}}} r^{-a_0}\rho_-^{-a_-} \Vb^\alpha f,\\
        \norm{f}_{\Aext^{k;a_0}(\D_{\mathrm{in}})}&:=\sum_{\abs{\alpha}\leq k}\sup_{\D_{\mathrm{in}}} r^{-a_0}\big((\Vb\cup\{r\partial_v\})^\alpha f\big).
    \end{nalign}
    The following regularity in the interior of the lightcone suffices to construct compatible data:
    \begin{lemma}\label{scat:reverse_peeling}
        Fix some nonlinearity and potential $\mathcal{N},V$, together with $a_0,a_-\in\R$ satisfying $a_->a_0+\frac{d-1}{2}$ such that for every
        $\phi\in\A^{\infty;a_0,a_-}(\D_{\mathrm{in}})+\Aext^{\infty;a_0}(\D_{\mathrm{in}})$ it holds that $\mathcal{N}[\phi]\in\A^{\infty;a_0-2,a_--1+\epsilon}(\D_{\mathrm{in}})+\Aext^{\infty;a_0-2}(\D_{\mathrm{in}})$, and $V\in\Aext^{\infty;-2}(\D_{\mathrm{in}})+\A^{\infty;-2,-1+\epsilon}(\D_{\mathrm{in}})$ for some $\epsilon>0$.
        Let $\phi\in\A^{\infty;a_0,a_-}(\D_{\mathrm{in}})+\Aext^{\infty;a_0}(\D_{\mathrm{in}})$ be a solution of $\Box\phi=\mathcal{N}[\phi]+V\phi$ in $\D_{\mathrm{in}}$.
        
        Then, there exists data on the outgoing cone $\phi_{\Cbar}\in C^{\infty}(\Cbar)+\A^{\infty;a_-}(\Cbar)$ such that $(\phi|_{\C},\phi_{\Cbar})$ are $k$ regular conormal compatible initial data with decay $a_0$ for all $k\geq0$.
        Provided that $\phi\in\Aext^{\infty;a_0}(\D_{\mathrm{in}})$, there exist $\phi_{\Cbar}\in C^\infty(\Cbar)$ such that $(\phi|_{\C},\phi_{\Cbar})$ are compatible initial data with decay $a_0$.
    \end{lemma}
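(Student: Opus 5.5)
The plan is to construct $\phi_{\Cbar}$ by prescribing its transversal jets at the sphere $\C\cap\Cbar=\{u=-1,v=0\}$ from the interior solution, and then extending these jets into $v>0$. First I would identify what compatibility requires. For $l\leq k$, the transversal derivatives $T^l\phi|_{\C}$ must be computed from the equation by integrating transport equations along $\C$. On $\Cbar$, I would take the outgoing jets $\pv^j\phi|_{\C\cap\Cbar}$, for $j$ up to any order, to be the limits from the interior $v\to0^-$. I would then define $\phi_{\Cbar}$ as a Taylor polynomial in $v$ with these coefficients, multiplied by a cutoff in $v$. In the purely smooth case $\phi\in\Aext^{\infty;a_0}$, this gives $\phi_{\Cbar}\in C^\infty(\Cbar)$. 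Compatibility at the corner then reduces to checking that the mixed derivatives $\pu^i\pv^j\phi$ computed from both sides agree. This holds because $\phi$ itself is a smooth solution up to $\{v=0\}$ near $u=-1$, since $\D_{\mathrm{in}}$ stays away from $r=0$ there.

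The main step is to show $(rT)^l\phi|_{\C}\in\A^{k-l;a_0}(\C)$. For this I would use the vector field $r\pv$, which is included in the $\Aext$ regularity and is comparable to $rT$ modulo $u\pu$ near $\C$. Restricting the bounds $r^{-a_0}(\Vb\cup\{r\pv\})^\alpha\phi\lesssim1$ to $v=0$ gives the $\Aext$ part directly.

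For the conormal part $\A^{\infty;a_0,a_-}$, the function can be singular at $\rho_-=0$, so $r\pv$ derivatives are not directly controlled there. Here I would use the equation in double null form,
\begin{equation}
\pu\pv(r^{\frac{d-1}{2}}\phi)=r^{\frac{d-1}{2}}\Big(\tfrac{(d-1)(d-3)}{4r^2}\phi-\tfrac{1}{r^2}\slashed{\Delta}\phi-\mathcal{N}[\phi]-V\phi\Big).
\end{equation}
The aim is to express $\pv$ derivatives of the conormal part through integration in $u$ from the data. Since $\rho_-^{a_-}$ with $a_->a_0+\frac{d-1}{2}$ means the conormal part vanishes to order above $a_0+\frac{d-1}{2}$ at $\C$, its first $\floor{a_0+\frac{d-1}{2}}$ transversal derivatives vanish on $\C$. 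Beyond that order, compatibility is only required up to $l\leq a_0+\frac{d-1}{2}$ in the conormal definition. This is exactly why the conormal definition truncates $l$.

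On $\Cbar$, I would accordingly define $\phi_{\Cbar}$ as the smooth Taylor part plus the restriction of the conormal part extended across $v=0$ in $\A^{\infty;a_-}$. One way to produce this extension is to reflect in $v$, or to extend $\rho_-^{a_-}$-type expansions. The hypothesis on $\mathcal{N}$ mapping into $\A^{a_0-2,a_--1+\epsilon}$ and on $V$ guarantees that the transport hierarchy preserves these classes. By induction in $l$, the right-hand sides lie in the right spaces, so each $T^l\phi|_{\Cbar}$ lies in $C^{k-l}+\A^{k-l;a_0+\frac{d-1}{2}-l+\delta}$, with $\delta$ coming from $a_->a_0+\frac{d-1}{2}$.

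The expected obstacle is this inductive bookkeeping. Integrating the $\pu\pv$ equation must not lose the weight $r^{a_0}$, which requires the potential/nonlinearity gap $\epsilon$. The corner matching of the conormal part also has to be respected, which is why only finitely many orders are imposed there.
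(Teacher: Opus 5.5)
Your proposal is correct and follows essentially the paper's route. The $\Aext^{\infty;a_0}$ bounds restricted to $\{v=0\}$ give the required control of $(rT)^l\phi|_{\C}$. The conormal part contributes nothing to transversal derivatives of order $l<a_-$, which covers all $l\leq a_0+\frac{d-1}{2}$ needed for conormal compatibility. Finally, $\phi_{\Cbar}$ is obtained by extending the interior jets across $\C\cap\Cbar$; the paper does this last step in one line by citing Sobolev extension.

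Two things to tidy. First, a single finite Taylor polynomial matches only finitely many jets, so to get one $\phi_{\Cbar}\in C^\infty(\Cbar)$ that is compatible for every $k$, use Borel's lemma or a Seeley-type extension of $\phi$ across $\{v=0\}$ near $u=-1$. Second, the detour through integrating the double-null equation along $\C$ is unnecessary: uniqueness for the transport ODEs already identifies the transported transversal derivatives with the restrictions of the interior solution's derivatives.
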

    \begin{proof}
        By assumption, it holds that $(r\partial_t)^j\phi|_{\Cbar}$ has the required regularity for all $j$ in the smooth case and for $j\leq a_-$ in the conormal case.
        The result follows by extendibility of Sobolev functions \cite{evans_partial_2010}[Section 5.4].

    \end{proof}

    We can use the compatibility definition to find an approximate solution to \cref{loc:eq:characteristic} with an error term vanishing to high order as $\rho_-\to0$ \textit{and} as $\rho_0\to0$.
	\begin{lemma}[Peeling]\label{scat:peeling}
		Fix $v>0,u=-1$, as well as $l\in\N$, $\vec{a}\in\R^3$ and $k>l+1$ with $a_-=0$.
		Fix $f=0$ and $\mathcal{N},V$ admissible nonlinear and linear functions in $\D_{-1,1}$, and $\phi_{\C},\phi_{\Cbar}$ smooth compatible initial data with decay $a_0$ and regularity $k$.
		Then there exists $\phi_1\in\A^{s-l;0,a_0,\infty}(\D_{-1,1})$ with $(rT)^j\phi_1\in\A^{k-l-j;0,a_0,\infty}(\D_{-1,1})$ for any $k\leq s$ such that
		\begin{equation}\label{eq:peeling}
			\Box\phi_1-\mathcal{N}[\phi_1]-V\phi_1\in\A^{k-l-2;l,a_0-2,\infty}(\D_{-1,1}), \quad \phi_1|_{\C}=\phi_{\C},\, \phi_1|_{\Cbar}-\phi_{\Cbar}\in\A^{k-l;l+1}(\Cbar).
		\end{equation} 
		Moreover, $\mathcal{N}_{\phi_1}[\phi]:=\mathcal{N}[\phi+\phi_1]-\mathcal{N}[\phi_1]$ is admissible with respect to $\vec{a}$ and thus also with respect to $(l,a_0,a_+)$.

        The same conclusion holds for conormal compatible initial data,  as long as $l\leq a_0+\frac{d-1}{2}$ and the estimate on $\phi_1|_{\Cbar}-\phi_{\Cbar}$ is weakened to $\phi_1|_{\Cbar}-\phi_{\Cbar}\in \A^{k-l;\min(l+1,a_0+\frac{d-1}{2}+\delta)}(\Cbar)$. 
	\end{lemma}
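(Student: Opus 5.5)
The natural approach is to build $\phi_1$ as a truncated Taylor expansion in $v$ (equivalently in $\rho_-$) off the incoming cone $\C$. The coefficients are fixed by the transversal derivatives of the data, and the expansion is then cut off away from the axis so that it vanishes to infinite order as $\rho_+\to0$.

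\textbf{Step 1: transversal jets on $\C$.} Set $\psi_j:=\partial_v^j\phi|_{\C}$, or equivalently $(rT)^j\phi|_{\C}$ rescaled by $r^{-j}$, for $j\leq l+1$. By compatibility, these are determined by the transport equations obtained by restricting $\Box\phi=\mathcal{N}(\phi)+V\phi$ and its $\partial_v$-derivatives to $\C$. Writing $\Box=-\partial_u\partial_v+\frac{d-1}{2r}(\partial_v-\partial_u)+r^{-2}\slashed{\Delta}$, each $\psi_j$ solves a first-order ODE in $u$ along $\C$. The source is built from $\psi_{<j}$, their angular derivatives, and the Taylor coefficients of $\mathcal{N}$ and $V$, and the initial value is given at $\C\cap\Cbar$ by $\partial_v^j\phi_{\Cbar}$. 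The assumption $(rT)^j\phi|_{\C}\in\A^{k-j;a_0}(\C)$ says precisely that $\psi_j\in\A^{k-j;a_0-j}(\C)$. I take this as the input and do not re-derive it. In the conormal case, the jets are only available for $j\leq a_0+\frac{d-1}{2}$, which is where the restriction on $l$ comes from.

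\textbf{Step 2: the ansatz.} Define
\[
\tilde\phi_1:=\sum_{j=0}^{l+1}\frac{v^j}{j!}\psi_j(u,\omega),
\]
extended off $\C$ constantly along the $v$-direction. Then set $\phi_1=\chi(\rho_+)\tilde\phi_1$, where $\chi$ is a cutoff equal to $1$ near $\rho_+\geq 1/2$ and vanishing near $\rho_+=0$. The cutoff only matters away from $\C$ and $\Cbar$ inside $\D_{-1,1}$, since $\rho_+=-u/r$ is bounded below near $\C$.

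With this choice, $v^j\psi_j=\rho_-^j r^j\psi_j$ lies in $\A^{k-j;j,a_0,\infty}$. Hence $\phi_1\in\A^{k-l;0,a_0,\infty}$. Since $rT$ is a combination of $u\pu$ and $v\pv$ up to bounded factors, the same estimate holds for $(rT)^j\phi_1$. By construction $\phi_1|_{\C}=\phi_{\C}$. Along $\Cbar$, $\phi_1$ agrees with the $v$-Taylor polynomial of $\phi_{\Cbar}$ to order $l+1$, which gives $\phi_1-\phi_{\Cbar}\in\A^{k-l;l+1}(\Cbar)$. In the conormal case, the non-smooth part $\A^{a_0+\frac{d-1}{2}+\delta}$ of $\phi_{\Cbar}$ limits this to the stated minimum.

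\textbf{Step 3: the error.} Compute $E:=\Box\phi_1-\mathcal{N}[\phi_1]-V\phi_1$. By the choice of the $\psi_j$, the coefficient of $v^j$ in $E$ vanishes for $j\leq l$. Taylor's theorem in $v$, with the remainder expressed through $\Vb$-derivatives, then gives $E=O(v^{l+1}r^{a_0-2-(l+1)}\cdot r)$. This is $\rho_-^{l}r^{a_0-2}$ times a conormal function, and the derivative loss accounts for $k-l-2$. The nonlinear and potential terms are handled by the admissibility bounds \cref{scat:eq:Nquad}, which supply exactly the $\rho_0^{-2}$ and $\rho_-^{-1+\delta}$ weights. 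The cutoff region contributes terms supported where $\rho_+\sim$ small, where $\phi_1$ vanishes, so these are harmless in weight $\infty$. The last claim, admissibility of $\mathcal{N}_{\phi_1}$, follows from the difference estimate in \cref{scat:def:admissible} applied with $\phi_1$ in the allowed ball.

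The main obstacle will be Step 3 in the conormal case. There one must check that the Taylor remainder in $v$ of a function that is only conormal across $\C$ still yields the gain $\rho_-^{l}$ without losing the $\rho_0$ weight. Equivalently, one must verify that the ODE solutions $\psi_j$ retain the rate $r^{a_0-j}$, which is exactly what the $(rT)^l$ hypothesis encodes. I would handle this by working throughout with $\Vb$-derivatives and $\rho_-=v/r$, rather than $\partial_v$ and $v$.
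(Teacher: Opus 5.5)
Your construction is essentially the paper's: a Taylor polynomial in $v$ built from the jets $\psi_j=\partial_v^j\phi|_{\C}$, which are fixed by the transport equations. It is multiplied by a cutoff in $v/u$ (equivalently in $\rho_+$) that vanishes near $\CH$. The error comes from Taylor expanding the equation, using smoothness of $\mathcal{N}$ and $V$ across $\C$, and admissibility of $\mathcal{N}_{\phi_1}$ is read off from the difference bound.

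\textbf{Truncation degree.} You truncate at $j=l+1$, whereas the paper stops at $j=l$, and this matters. The coefficient of $v^j$ in $E$ is $\frac{1}{j!}\big(-\pu+\frac{d-1}{2|u|}\big)\psi_{j+1}$ plus terms involving only $\psi_0,\dots,\psi_j$. So with degree $l$, the coefficients of $v^0,\dots,v^{l-1}$ already vanish and $E\sim v^l r^{a_0-l-2}=\rho_-^l r^{a_0-2}$, which is exactly the weight in \cref{eq:peeling}. Likewise, $\phi_{\Cbar}$ minus its degree-$l$ Taylor polynomial is already $O(v^{l+1})$. The extra jet is therefore unnecessary, and it is costly in two ways:
\begin{enumerate}
\item Since $\psi_{l+1}\in\A^{k-l-1;a_0-l-1}(\C)$, you only get $\phi_1\in\A^{k-l-1}$, not $\A^{k-l}$ as you assert.
\item In the conormal case with $l=\lfloor a_0+\frac{d-1}{2}\rfloor$, which is precisely the case used in \cref{scat:cor:main}, one has $l+1>a_0+\frac{d-1}{2}$. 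Conormal compatibility then gives no bound on $(rT)^{l+1}\phi|_{\C}$. Moreover, $\partial_v^{l+1}\phi_{\Cbar}$ need not exist at $\C\cap\Cbar$, because the conormal part of $\phi_{\Cbar}$ is only in $\A^{a_0+\frac{d-1}{2}+\delta}(\Cbar)$ with $\delta$ possibly small.
\end{enumerate}
So your ansatz is undefined in the very case the corollary needs; your own Step 1 remark on which jets are available already says this. The fix is to truncate at $j=l$.

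Relatedly, the ``main obstacle'' you name is not one. $\phi_1$ is a polynomial in $v$, so every Taylor remainder in $E$ is of a function smooth across $\C$. Conormality only enters through which jets exist and through the matching on $\Cbar$.

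\textbf{The $(rT)^j$ bound.} Your justification of $(rT)^j\phi_1\in\A^{k-l-j;0,a_0,\infty}$ is wrong. We have $rT=\frac r2(\pu+\pv)$, and $r\pv=\rho_-^{-1}v\pv$ has an unbounded coefficient at $\C$. Thus $rT$ is transversal to $\C$ and not a bounded combination of elements of $\Vb$; this is exactly the point of \cref{rem:in:relevance}. The bound is still true for your $\phi_1$, but because of its polynomial structure:
\begin{itemize}
\item $(r\pv)^j(v^i\psi_i)$ is a linear combination of terms $r^m v^{i-m}\psi_i=\rho_-^{i-m}r^i\psi_i$ with $m\leq i$. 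Each is $O(r^{a_0})$ since $\psi_i\sim r^{a_0-i}$.
\item $r\pu=-\rho_+^{-1}u\pu$ is harmless on $\supp\chi$, where $\rho_+\gtrsim 1$.
\end{itemize}
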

	\begin{proof}
		Let's pick a cut-off function $\chi(s)$ localising to $s<-1$.
		We simply set
		\begin{equation}
			\phi_1(v,u,x/\abs{x})=\chi(v/u)\sum_{k=0}^{l} \frac{v^k}{k!} (\partial_v^k\phi)|_{\C}(u,x/\abs{x})\in\A^{s-l;0,a_0,\infty}(\D_{-1,1}).
		\end{equation}
		Both estimates of \cref{eq:peeling} then follow simply by Taylor expanding 
        \cref{loc:eq:characteristic} in $v$ and using the smoothness of $\mathcal{N}$ and $V$ across $\C$. (Recall that the quantities $(\partial_v^j\phi)|_{\C}$ are themselves determined by integrating the equation \cref{loc:eq:characteristic} along $\C$.)
        
		The admissibility of $\mathcal{N}_{\phi_1}$ follows by definition.
	\end{proof}

	At this stage, we may cite the following existence result (which we discuss in more detail in the appendix):
	\begin{prop}[\cite{kadar_scattering_2025}]\label{scat:cor:local}
		Fix $\vec{a}\in\R^3$ with $a_->0$, $\epsilon>0$ and $k\geq \frac{d+2}{2}+5$.
		Fix $\mathcal{N},V$ admissible with gap $\delta>0$.
		Furthermore, fix $f\in\A^{k;\vec{a}^f}(\D_{-1,1})$ and $\phi_{\C}=0$, $\phi_{\Cbar}\in\A^{2k;a_-}(\Cbar)$.
		Assume that $(a_-,a_0,a_+)$ satisfies $a_+<0$ and
		\begin{equation}\label{scat:eq:admiss}
			a_->a_0+\frac{d-1}{2}>a_+,\quad \vec{a}\leq \vec{a}^f+(1,2,1).
		\end{equation} 
		Then, there exists $v_1>0$, depending inversely on the size of $f,\phi_{\Cbar},V,\mathcal{N}$, such that \cref{loc:eq:characteristic} has a solution in $\D_{-1,v_1}$. 
		In $\D'=\D_{-1,v_1}\setminus\D_{-1,v_1/2}$, we have the following estimate: 
		\begin{equation}
			\phi\in\A^{k-\frac{d}{2};a_+-\epsilon}(\D'),\quad \forall\epsilon>0.
		\end{equation}
		Moreover, if $a_0+\frac{d-1}{2}>0$, then we also have $\phi\in\A^{k';a_0+\frac{d-1}{2}}(\D')+C^{k'}(\D')$ for some $k'\sim_{d,a_0} k$.
	\end{prop}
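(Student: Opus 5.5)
The plan is to reduce \cref{scat:cor:local} to the semi-global scattering theorem of \cite{kadar_scattering_2025} (recalled as \cref{app:thm:KK25}) via the inversion $(\underline{u},\underline{v})=\Phi(u,v)=(-v^{-1},-u^{-1})$ from \cref{in:sec:link}. This map sends $\D_{-1,v_1}$ to $\{\underline{u}<-v_1^{-1},\,\underline{v}>1\}$, $\C$ to $\scrim$, $\CH$ to $\scrip$, and the vertex $\{u=v=0\}$ to spacelike infinity. It reverses $u$ and $v$ and acts conformally.

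\textbf{Step 1: transform the equation.} Compute how $\Box$ transforms. With $\underline{r}=\underline{v}-\underline{u}=r/(uv)$ (up to sign conventions), the map is essentially a Kelvin-type inversion, so
\[
\Box\big(\Omega^{\frac{d-1}{2}}\underline\phi\big)=\Omega^{\frac{d+3}{2}}\,\underline{\Box}\,\underline\phi
\]
plus lower-order terms. Here the conformal factor is $\Omega\sim uv$, and the lower-order terms are of the form $\underline r^{-2}\times$ (smooth functions of $\underline t/\underline r$) away from the axis. These extra terms are absorbed into the potential $V$. They have exactly the scaling-critical form $\underline r^{-2}V(\underline t/\underline r)$ allowed by \cref{scat:def:admissible}.

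\textbf{Step 2: translate the weights.} Track the boundary-defining functions. Since $\rho_-=v/r$, $\rho_0=r$ and $\rho_+=-u/r$, the inversion gives:
\begin{itemize}
\item $v\to0$ (the cone $\C$) maps to $\scrim$;
\item $u\to 0$ maps to $\scrip$;
\item $r\to0$ along the vertex maps to spacelike infinity.
\end{itemize}
Accordingly, $\rho_\pm$ become the $\scrip$/$\scrim$ defining functions, with roles swapped, and $\rho_0$ becomes $\underline r^{-1}$. The factor $\Omega^{\frac{d-1}{2}}$ shifts $a_0$ by $\frac{d-1}{2}$. After this shift, the conditions \cref{scat:eq:admiss}, i.e.\ $a_->a_0+\frac{d-1}{2}>a_+$ with $a_+<0$, become exactly the ordering of the rates at $\scrim$, $i^0$ and $\scrip$ required in \cite{kadar_scattering_2025}. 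Likewise, the source condition $\vec a\le\vec a^f+(1,2,1)$ is the usual gain of $\Box^{-1}$ in b-weights.

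The vector fields $\Vb$ are preserved up to relabeling, since $u\partial_u\mapsto -\underline v\partial_{\underline v}$, and similarly for $\Ve$. Hence the $\A$-spaces and the admissibility conditions transform into their counterparts in \cite{kadar_scattering_2025}. Data $\phi_{\C}=0$ corresponds to trivial radiation data on $\scrim$. Data on $\Cbar=\{u=-1\}$ become data on the incoming cone $\{\underline v=1\}$, with the $\A^{2k;a_-}$ decay giving the required vanishing towards $\scrim$.

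\textbf{Step 3: invoke the theorem and pull back.} Apply \cref{app:thm:KK25} to obtain existence in $\{\underline u<-U,\,\underline v>1\}$ for $U$ large. Large $U$ corresponds to small $v_1=U^{-1}$, which is where smallness enters: restricting to a neighbourhood of $i^0$ makes the norms of $f$, $\phi_{\Cbar}$, $V$ and $\mathcal{N}$ small relative to the gap $\delta$. This gives the stated inverse dependence of $v_1$ on their size.

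The estimates in $\D'$, where $v\sim v_1$ is bounded away from $0$ so only $\rho_+$ degenerates, come from the sharp $\scrip$-asymptotics of \cite{kadar_scattering_2025}:
\begin{itemize}
\item the $a_+-\epsilon$ bound is the generic rate at $\scrip$, with a loss of $d/2$ derivatives from Sobolev embedding;
\item when $a_0+\frac{d-1}{2}>0$, the leading polyhomogeneous term at $\scrip$ is a smooth radiation field, i.e.\ the $C^{k'}$ part, plus a remainder of rate $a_0+\frac{d-1}{2}$.
\end{itemize}

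\textbf{Main obstacle.} The key difficulty is Step 1–2 bookkeeping near the axis $\{r=0\}$ inside $\D$ and at the corner $\Cbar\cap\C$. Admissibility was only assumed away from $r=0$, and the weights must match exactly, including the $\rho_0^{2-\delta}$ gain. If this fails, I would first check the conformal computation in double null coordinates, where the Jacobian is explicit.
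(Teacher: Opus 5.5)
Your route is the paper's: invert by $\Phi$, conjugate by powers of $r$ and $\underline{r}$, transfer weights and admissibility (this is \cref{app:lemma:equivalent}), and quote \cref{app:thm:KK25}. However, the two computations that make up this reduction are misstated, so as written you assert the hypotheses of \cref{app:thm:KK25} rather than check them.

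First, no lower-order terms arise. From $\partial_u=\underline{v}^2\partial_{\underline{v}}$, $\partial_v=\underline{u}^2\partial_{\underline{u}}$ and $r^{-2}=\underline{u}^2\underline{v}^2\underline{r}^{-2}$ one gets the exact identity
\[
\Box\phi=\frac{\underline{u}^2\underline{v}^2}{r^{\frac{d-1}{2}}}\Big(-\partial_{\underline{u}}\partial_{\underline{v}}+\frac{\mathring{\slashed{\Delta}}-(d-1)(d-3)/4}{\underline{r}^2}\Big)\big(r^{\frac{d-1}{2}}\phi\big),
\]
with the same $\underline{r}^{-2}$-coefficient as in the original coordinates. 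Indeed $\Phi$ is a conformal isometry, $\Phi^*(-4\dd\underline{u}\,\dd\underline{v}+\underline{r}^2\mathring{g})=(uv)^{-2}(-4\dd u\,\dd v+r^2\mathring{g})$. So in your formula the factor is $\Omega=\abs{uv}^{-1}$, not $uv$. With $\Omega=uv$ the error would contain first derivatives and could not be absorbed into $V$.

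Second, $\Phi_*\rho_\bullet=\brho_\bullet$ for each $\bullet\in\{-,0,+\}$. There is no swap, and $\rho_0$ goes to $\brho_0=\underline{r}/\abs{\underline{u}\underline{v}}$, not to $\underline{r}^{-1}=\brho_-\brho_0\brho_+$. Since $\phi^{\mathrm{g}}=\abs{uv}^{\frac{d-1}{2}}\phi=(\rho_-\rho_0^2\rho_+)^{\frac{d-1}{2}}\phi$, the weights shift by $(\frac{d-1}{2},d-1,\frac{d-1}{2})$, not by $\frac{d-1}{2}$ in $a_0$ alone. This matters:
\begin{itemize}
\item The ordering $a_->a_0+\frac{d-1}{2}>a_+$ comes out under either shift.
\item The thresholds $a^{\mathrm{g}}_+<\frac{d-1}{2}$ and $a^{\mathrm{g}}_->\frac{d-1}{2}$ of \cref{app:thm:KK25} become the hypotheses $a_+<0$, $a_->0$ only with the correct shift. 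With yours you would need $a_->\frac{d-1}{2}$, which is not assumed.
\item $f^{\mathrm{g}}=\abs{uv}^{\frac{d+3}{2}}f$ turns $\vec a\le\vec a^f+(1,2,1)$ into the theorem's source condition.
\item Part b) applies exactly when $a^{\mathrm{g}}_0=a_0+d-1>\frac{d-1}{2}$, i.e.\ when $a_0+\frac{d-1}{2}>0$.
\end{itemize}

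Finally, the obstacle you single out is absent. On $\D_{-1,v_1}$ one has $r=v-u>0$, and the only point of the closure with $r=0$ is the vertex, which is sent to $i^0$. The image $\{\underline{u}<-v_1^{-1},\underline{v}>1\}$ has $\underline{r}>1+v_1^{-1}$, so there is no axis anywhere. Also, smallness comes from taking $\abs{\underline{u}}$ large, i.e.\ from the thin strip $\{0<v<v_1\}$ along all of $\C$ (near all of $\scrim$), not from localising near $i^0$. With these corrections your outline is exactly the paper's proof.
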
	
	\begin{proof}
		We begin by rewriting the characteristic problem with coordinates $\underline{u}=-v^{-1}$, $\underline{v}=-u^{-1}$, $\underline{r}=\underline{v}-\underline{u}$.
		Then, we write $\Box$ as
		\begin{equation}
			\Box\phi=\left(-\partial_u\partial_v+\frac{d-1}{2r}(\partial_v-\partial_u)+\frac{\mathring{\slashed{\Delta}}}{r^2}\right)\phi=\frac{\underline{u}^2\underline{v}^2}{r^{\frac{d-1}{2}}}\underbrace{\Big(-\partial_{\underline{u}}\partial_{\underline{v}} +\frac{\mathring{\slashed{\Delta}}-(d-1)(d-3)/4}{\underline{r}^2}\Big)}_{:=\mathcal{P}_d}(r^{\frac{d-1}{2}}\phi),
		\end{equation}
        where $\mathring{\slashed{\Delta}}$ is the Laplacian on the unit sphere.
		Therefore, we conclude for $\psi=r^{\frac{d-1}{2}}\phi$ and $\phi^{\mathrm{g}}=\underline{r}^{-\frac{d-1}{2}}\psi$ that
		\begin{equation}
			\mathcal{P}_d\psi=\frac{r^{\frac{d-1}{2}}}{\underline{u}^2\underline{v}^2}\mathcal{N}[\phi]+\frac{1}{\underline{u}^2\underline{v}^2}V\psi\implies \underline{\Box}\phi^{\mathrm{g}}=\frac{\underline{r}^{-\frac{d-1}{2}}r^{\frac{d-1}{2}}}{\underline{u}^2\underline{v}^2}\mathcal{N}[\phi]+\frac{1}{\underline{u}^2\underline{v}^2}V\phi^{\mathrm{g}},
		\end{equation}
        where $\underline{\Box}=-\partial_{\underline{u}}\partial_{\underline{v}}+\frac{d-1}{2\underline{r}}(\partial_{\underline{v}}-\partial_{\underline{u}})+\underline{r}^{-2}\mathring{\slashed{\Delta}}$.
		We can apply the result of \cite{kadar_scattering_2025} as recalled in \cref{app:lemma:equivalent,app:thm:KK25} to deduce the proposition.
	\end{proof}

    Now, \cref{scat:cor:local} requires the relation $a_->a_0+\frac{d-1}{2}$ to be satisfied for the solution constructed.
    As this will in general not hold for the solutions in \cref{in:thm:main}, we need to remove the assumption on $a_-$ by peeling some leading order terms via \cref{scat:peeling} to deduce:
	\begin{cor}\label{scat:cor:main}
        a)
		Fix $k>d+5+a_0$.
		Fix $\vec{a}\in\R^3$ with $a_+<0$ and corresponding admissible $\mathcal{N},V$. 
		Let $\phi_{\C},\phi_{\Cbar}$ be compatible or conormal compatible initial data with decay $a_0$ and regularity $k$.
		Then there exist $v_1>0$ and a solution of \cref{loc:eq:characteristic} in $\D_{-1,v_1}$.

        b)
        Moreover, we have the following estimate in $\D'=\D_{-1,v_1}\setminus\D_{-1,v_1/2}$:
		\begin{equation}
			\phi\in\A^{k-\frac{d}{2};a_+}(\D').
		\end{equation}
		Furthermore, if $a_0+\frac{d-1}{2}>0$, then for some $k'\sim_{d,a_0} k$, we also have
        \begin{equation}\label{scat:eq:improved_reg}
            \phi\in\A^{k';a_0+\frac{d-1}{2}-}(\D')+C^{k'}(\D').
        \end{equation}
	\end{cor}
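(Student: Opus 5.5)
The plan is to reduce \cref{scat:cor:main} to \cref{scat:cor:local} by subtracting an approximate solution built with \cref{scat:peeling}.

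\textbf{Step 1: peel off the leading terms along $\C$.} Fix an integer $l$ with $l>a_0+\frac{d-1}{2}$; in the conormal case instead take $l=\floor{a_0+\frac{d-1}{2}}$, and use the additional $\delta$-gain on $\Cbar$. Apply \cref{scat:peeling} to $(\phi_{\C},\phi_{\Cbar})$ to obtain $\phi_1\in\A^{k-l;0,a_0,\infty}(\D_{-1,1})$ with the following properties:
\begin{itemize}
\item $\phi_1|_{\C}=\phi_{\C}$;
\item $\phi_1|_{\Cbar}-\phi_{\Cbar}\in\A^{k-l;a_-}(\Cbar)$, where $a_-=\min(l+1,a_0+\frac{d-1}{2}+\delta)$;
\item the error $f:=-(\Box\phi_1-\mathcal{N}[\phi_1]-V\phi_1)\in\A^{k-l-2;l,a_0-2,\infty}$.
\end{itemize}

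\textbf{Step 2: set up the problem for the remainder.} Write $\phi=\phi_1+\bar\phi$. Then $\bar\phi$ solves
\[
\Box\bar\phi=\mathcal{N}_{\phi_1}[\bar\phi]+V\bar\phi+f,\qquad \bar\phi|_{\C}=0,\qquad \bar\phi|_{\Cbar}=\phi_{\Cbar}-\phi_1|_{\Cbar}.
\]
By \cref{scat:peeling}, $\mathcal{N}_{\phi_1}$ is admissible for the new weight $(a_-',a_0,a_+)$. I will choose $a_-'$ slightly below both $a_-$ and $l+1$, but still larger than $a_0+\frac{d-1}{2}$.

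We now check the hypotheses of \cref{scat:cor:local}.
\begin{itemize}
\item $a_+<0$ holds by assumption.
\item $a_-'>a_0+\frac{d-1}{2}>a_+$. The first inequality holds by the choice of $l$ (and of $\delta$ in the conormal case). For the second, shrink $a_+$ if necessary, since only the upper bound matters.
\item $\vec{a}\leq\vec{a}^f+(1,2,1)$, where $\vec a^f=(l,a_0-2,\infty)$. This gives $a_-'\leq l+1$ and $a_0\leq a_0$, and the $+$ component is trivially satisfied.
\end{itemize}
For the data on $\Cbar$, we need $\phi_{\Cbar}-\phi_1|_{\Cbar}\in\A^{2k'';a_-'}$ for some $k''\geq\frac{d+2}{2}+5$. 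Since $k>d+5+a_0$ and $l\approx a_0+\frac{d-1}{2}$, setting $k''=\floor{(k-l)/2}$ leaves enough room. This regularity bookkeeping is where the hypothesis on $k$ enters.

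\textbf{Step 3: conclude part a).} \cref{scat:cor:local} produces $\bar\phi$ on $\D_{-1,v_1}$. Hence $\phi=\phi_1+\bar\phi$ solves \cref{loc:eq:characteristic}, which proves a).

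\textbf{Step 4: estimates for part b).} On $\D'$ we have $v\sim v_1$. There $\rho_-$ and $\rho_0$ are comparable to $\rho_+^{-1}$-type quantities, and $\phi_1$ is supported where $v/u$ is small because of the cutoff $\chi$.
\begin{itemize}
\item Since $\phi_1\in\A^{\cdot;0,a_0,\infty}$, it decays to infinite order in $\rho_+$. So $\phi_1$ lies in every $\A^{\cdot;a_+}(\D')$, and is in fact $C^{k'}$ near $\CH$.
\item \cref{scat:cor:local} then gives $\bar\phi\in\A^{k-\frac d2;a_+-\epsilon}$. Since $a_+<0$ was only required to be admissible, I apply the result with $a_+$ replaced by a slightly larger admissible value (still negative) to absorb the $\epsilon$-loss.
\item For \cref{scat:eq:improved_reg}, combine the second conclusion of \cref{scat:cor:local} for $\bar\phi$ with the smoothness of $\phi_1$ near $\CH$. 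The $-$ in the exponent accounts for the slight reduction $a_-'<a_-$ and the $\delta$-losses.
\end{itemize}

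\textbf{Main obstacle.} The step I expect to need the most care is the conormal case. There $l$ cannot exceed $a_0+\frac{d-1}{2}$, so the strict inequality $a_-'>a_0+\frac{d-1}{2}$ must come from the $\delta$-improvement of the data on $\Cbar$. In addition, $f$ only gains $l$ powers of $\rho_-$, so $\vec a\le \vec a^f+(1,2,1)$ must be checked with $a_-'\le \min(l+1,a_0+\frac{d-1}{2}+\delta)$. Choosing $a_-'=a_0+\frac{d-1}{2}+\delta/2$ should make both conditions hold simultaneously.
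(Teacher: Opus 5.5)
Your proposal follows the paper's proof: peel with \cref{scat:peeling} and apply \cref{scat:cor:local} to $\phi-\phi_1$. The paper takes $l=\lfloor a_0+\frac{d-1}{2}\rfloor$ in both cases, which already suffices since only $l+1>a_0+\frac{d-1}{2}$ is needed. Your hypothesis check, including absorbing the $\epsilon$-loss in $a_+$ by raising $a_+$ slightly (which does preserve admissibility), fills in what the paper's two-line argument leaves implicit.

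There are two small corrections. First, lowering $a_+$ does \emph{not} preserve admissibility, so drop the ``shrink $a_+$'' remark; in any case $a_0+\frac{d-1}{2}>a_+$ is automatic once $a_0+\frac{d-1}{2}\ge0$. Second, your claim that $k>d+5+a_0$ leaves enough room is false as written. \cref{scat:cor:local} needs $k-l\ge 2k''\ge d+12$, and it then yields regularity of about $k''-\frac d2$, not $k-\frac d2$. This looseness is in the stated constants and the paper does not track it either, so simply assume $k$ larger rather than asserting the count closes.
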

	\begin{proof}
		This follows from \cref{scat:peeling} and  \cref{scat:cor:local}.
        More precisely, we first peel off $l=\lfloor a_0+\frac{d-1}{2}\rfloor$ terms using \cref{scat:peeling}, and then apply \cref{scat:cor:local} to the difference $\phi-\phi_1$.
        \end{proof}

    \subsection{Extendibility beyond the Cauchy horizon}\label{subsec:3.2}
    So far, we have only shown that the solution exists up to the null hypersurface $\CH$.
    However, we can show that it can be (non uniquely) extended across $\CH$ away from $r=0$ such that it is still a weak solution of the equation.
    As the solutions of \cref{in:conj:type1,in:conj:type2} are no longer smooth (respectively $C^{c_\nu}$ regular) up to and including $(r,t)=(0,0)$ this shows that $\CH$ is a Cauchy horizon.\footnote{It is important to note that while $\CH$ is the boundary of the maximal globally hyperbolic development in a given regularity class, it may or may not be the case that there is a selection criterion to uniquely continue singularity forming solutions of \cref{in:conj:type1,in:conj:type2} across $\CH$ in a rougher topology.}

    \begin{lemma}
        Let $\phi,k$ be as in \cref{scat:cor:main} with $a_0+\frac{d-1}{2}>0$ and $k$ sufficiently large so that \cref{scat:eq:improved_reg} holds for $k'\geq d+10$.\footnote{We are not working with the sharp regularity readily available from the literature.}
        Then, $\phi$ can be extended as a weak solution to $P[\phi]=0$ across $\CH$ away from $r=0$.
    \end{lemma}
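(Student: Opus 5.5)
Near $\CH$ but away from $r=0$, in $\D'=\D_{-1,v_1}\setminus\D_{-1,v_1/2}$, the improved regularity \cref{scat:eq:improved_reg} splits the solution into two parts. We write $\phi=\phi_{\mathrm{s}}+\phi_{\mathrm{c}}$ with $\phi_{\mathrm{s}}\in C^{k'}(\D')$ and $\phi_{\mathrm{c}}\in\A^{k';a_0+\frac{d-1}{2}-}(\D')$. In $\D'$ we have $r\sim v\sim 1$ up to constants depending on $v_1$, so $\rho_0$ and $\rho_-$ are bounded above and below. The only degenerating weight is therefore $\rho_+\sim -u$. Hence $\phi_{\mathrm{c}}$ satisfies
\[
|(u\pu)^{j}\{v\pv,\Omega_{ij}\}^{\beta}\phi_{\mathrm{c}}|\lesssim |u|^{a_0+\frac{d-1}{2}-\epsilon},\qquad j+|\beta|\leq k',
\]
with $\Omega_{ij}=x_i\partial_{x_j}-x_j\partial_{x_i}$. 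Since $a_0+\frac{d-1}{2}>0$, $\phi_{\mathrm{c}}$ and all its tangential derivatives ($\pv$, angular) extend continuously by zero to $u=0$. The $\pu$ derivatives satisfy $|\pu^j(\cdots)\phi_{\mathrm{c}}|\lesssim|u|^{a_0+\frac{d-1}{2}-\epsilon-j}$.

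\textbf{Step 1 (extension).} Extend $\phi_{\mathrm{s}}$ to $u>0$ as a $C^{k'}$ function, for instance by a Seeley/Whitney reflection across $u=0$. Extend $\phi_{\mathrm{c}}$ by zero. The extended $\phi$ is continuous, and every derivative containing at most one $\pu$ lies in $L^2_{\mathrm{loc}}$ provided $a_0+\frac{d-1}{2}-\epsilon-1>-\tfrac12$. This may fail in general, so instead of extending by the data itself we set up a new characteristic problem. We pose data on $\CH\cap\{v\in(v_1/2,v_1)\}$ given by the trace $\phi|_{\CH}$, which by the above lies in $C^{k'}$ in $v$ and the angles. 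On an outgoing cone $\{v=v_1/2,u\in[0,u_2]\}$ we pose any smooth data continuing the trace. We then solve $P[\psi]=0$ in $\{u\in(0,u_2),v\in(v_1/2,v_1)\}$ for small $u_2$. Since $r$ is bounded below there, this is a standard local characteristic problem with $C^{k'}$ data for a smooth semilinear equation, for example by Rendall's theorem or energy estimates, using $k'\geq d+10$ for Sobolev embedding. The resulting $\psi$ is $C^{k'-\lceil d/2\rceil-1}$ up to $u=0$.

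\textbf{Step 2 (weak solution across $\CH$).} Let $\tilde\phi=\phi$ for $u<0$ and $\tilde\phi=\psi$ for $u>0$. We test against $\varphi\in C^\infty_c$ supported in $\{v\in(v_1/2,v_1)\}$ and integrate by parts separately on $\{u<-\eta\}$ and $\{u>0\}$. In null coordinates the principal part is $-\pu\pv$ times a factor of $r^{d-1}$. We integrate the $\pu$ derivative onto $\varphi$ and keep the $\pv$ derivative on the solution. The boundary terms are $\int_{u=-\eta}\varphi\,\pv\phi\, r^{d-1}$ and its counterpart at $u=0^+$. Only tangential $\pv$-derivatives appear, and these converge as $\eta\to0$ to $\pv(\phi|_{\CH})=\pv(\psi|_{\CH})$, so the boundary terms cancel. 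The lower-order terms $\frac{d-1}{2r}(\pv-\pu)\phi$ are handled the same way. The $\pu$ part either combines into the divergence form $\pu(r^{d-1}\pv\phi)$, or is integrated against $\varphi$, where it is absolutely integrable in $u$ because $|\pu\phi_{\mathrm{c}}|\lesssim |u|^{a_0+\frac{d-1}{2}-1-\epsilon}$ has integrable singularity. The nonlinearity is bounded since $\phi$ is bounded.

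\textbf{Main obstacle.} The hardest step is justifying the limit $\eta\to0$. We must show that $\pv\phi$ converges uniformly on $u=-\eta$, and that $\phi$ and the equation terms are locally integrable near $u=0$ given only $a_0+\frac{d-1}{2}>0$. Both follow from the conormal bounds above. The key facts are the integrability of $|u|^{a_0+\frac{d-1}{2}-1-\epsilon}$ and that $v\pv$ and $\Omega_{ij}$ preserve the vanishing rate. This is where we use $k'\geq d+10$: it gives enough tangential derivatives to control the traces in $C^1$ via Sobolev embedding on the spheres.
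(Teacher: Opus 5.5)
Your proposal is correct and follows the paper's proof. Because $a_0+\frac{d-1}{2}>0$, the conormal part in \cref{scat:eq:improved_reg} vanishes on $\CH$, so $\phi|_{\CH}\in C^{k'}$. One then solves the characteristic problem beyond $\CH$ with this trace and glues. The paper uses Rendall's theorem on a small square near $\CH\cap\{v=v_1/2\}$; your thin strip along all of $v\in(v_1/2,v_1)$ needs the energy-estimate argument you also mention.

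Your $\eta\to0$ integration by parts spells out the paper's one-line claim that $P[\tilde\phi]\in L^1$ with no singular part on $\CH$. It works because only the tangential derivative $\pv$ appears in the boundary terms, and the singularity of $\partial_u\phi$ is integrable.
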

    \begin{proof}
        Let $v_1$ be as in \cref{scat:cor:main}.
        Observe that $\phi|_{\CH}$ is $C^{k'}(\CH)$ as a function restricted onto $\CH$, as the $\A$ part is absent for this restriction. Let us write $P[\phi]=\Box\phi-\mathcal{N}[\phi]-V\phi-f$. 
        From local well-posedness of the characteristic initial value problem, there exists a solution $\bar\phi\in H^{k'/2}$ to
        \begin{equation}
            P[\bar\phi]=0,\quad \bar\phi|_{\CH}=\phi|_{\CH},\, \bar\phi|_{v=v_1/2,u>0}(u,x/r)=\phi|_{\CH\cup \{v=v_1/2\}}(x/r)
        \end{equation}
        in the region $\{u\in(0,\epsilon),v\in(v_1/2,v_1/2+\epsilon)\}$ for $\epsilon\ll 1$ \cite{rendall_reduction_1990}.
        Let us define in $\{u\in(-\epsilon,\epsilon),v\in(v_1/2,v_1/2+\epsilon)\}$
        \begin{equation}
            \tilde\phi=\begin{cases}
                \phi& u<0\\
                \bar\phi & u>0.
            \end{cases}
        \end{equation}
        It is easy to check that $\tilde\phi$ is a weak solution to $P[\phi]=0$, since $P[\psi]\in L^1$ for all $\psi$ such that
        \begin{equation}
            \psi=\begin{cases}
                \psi_-\in\A^{k';a_0+\frac{d-1}{2}-}(\D_{-1,v_1})+C^{k'}(\D_{-1,v_1})& u<0\\
                \psi_+\in H^{k'/2}& u>0,
            \end{cases}\qquad
            \psi_+|_{\CH}=\psi_-|_{\CH}.\qedhere
        \end{equation}
    \end{proof}

    \subsection{Semi-global existence: Extending to future null infinity. }\label{sec:3:scrip}

    The previous two subsections are applicable to the solutions of \cref{in:conj:type1,in:conj:type2,in:conj:type2_sup}. In the current subsection, we provide the statements necessary to prove the extendibility of only the solutions of \cref{in:conj:type2} (which have $\phi_0=0$ as a background) to $\scrip$, i.e.~to $\{v=\infty\}$.\footnote{The case of a non-trivial background as in \cref{in:conj:type1,in:conj:type2_sup} requiring different techniques, see \cite{christodoulou_examples_1994,rodnianski_naked_2023} for details.}
    To use standard semi-global existence results, we require that the Cauchy horizon is sufficiently regular.
    We emphasise that this requirement is only necessary to appeal to \cite{kadar_scattering_2025}, however the methods should also yield global existence in all cases where finite time existence can be  proved using in \cref{scat:cor:main}.

    We begin by noting that a sufficiently regular interior solution---as assumed in \cref{in:conj:type2}---can be extended along $\Cbar_u$ with small data simply by taking $u$ sufficiently close to 0:
    \begin{lemma}
        Let $\mathcal{N},V,a_0,a_-,\phi$ be as in \cref{scat:reverse_peeling}.
        Then, for any $k\in\N$, there exists $\phi_{\Cbar_u}$ with $\supp\phi_{\Cbar_u}\subset \Cbar_u\cap\{v<1\}$ such that $(\phi|_{\C},\phi_{\Cbar_u})$ is conormal admissible on $\C\cup\Cbar_u$ with $\phi_{\Cbar_u}=\phi_{\Cbar_u,\mathrm{s}}+\phi_{\Cbar_u,\mathrm{c}}$ and
        \begin{equation}\label{scat:eq:small_extension}
            \norm{\phi_{\Cbar_u,\mathrm{c}}}_{\A^{k;a_-}(\Cbar_u)},\norm{\{1,r\partial_v,x_i\partial_{x_j}-x_j\partial_{x_i}\}^k\phi_{\Cbar_u,\mathrm{s}}}_{L^\infty(\Cbar_u)}\lesssim_{k} \abs{u}^{a_0},
        \end{equation}
        with implicit constant independent of $u$.
    \end{lemma}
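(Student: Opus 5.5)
The plan is to build $\phi_{\Cbar_u}$ from the transversal jets of $\phi$ at the sphere $\C\cap\Cbar_u=\{v=0,\ u\}$, and then to use scaling to get the $\abs{u}^{a_0}$ bound.

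\textbf{Step 1: the jets at the corner.} Since $\phi\in\A^{\infty;a_0,a_-}(\D_{\mathrm{in}})+\Aext^{\infty;a_0}(\D_{\mathrm{in}})$ solves the equation, the argument of \cref{scat:reverse_peeling} shows that the quantities $(r\partial_t)^j\phi$ on $\C$ are controlled. On the sphere at parameter $u$ we have $r\sim\abs{u}$, so these satisfy
\begin{equation*}
\abs{(r\partial_v)^j\Vb^\alpha\phi}\lesssim\abs{u}^{a_0}.
\end{equation*}
This is the origin of the factor $\abs{u}^{a_0}$. For the conormal part we use the weight $\rho_-^{a_-}=(v/r)^{a_-}$. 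This gives the expansion
\begin{equation*}
\phi|_{\Cbar_u}(v)\sim\sum_{j\le a_-}c_j(u,\omega)(v/\abs{u})^j+O\big((v/\abs{u})^{a_-}\big),
\end{equation*}
with all coefficients of size $\abs{u}^{a_0}$.

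\textbf{Step 2: rescaled construction.} Write $s=v/\abs{u}$. Fix, once and for all, the profile on the reference cone from \cref{scat:reverse_peeling}, namely a smooth part $\tilde\phi_{\mathrm s}$ and a conormal part $\tilde\phi_{\mathrm c}\in\A^{\infty;a_-}$. Multiply each by a cutoff $\chi(s)$ supported in $s<1$ and equal to $1$ near $s=0$. For general $u$, choose $\phi_{\Cbar_u,\mathrm s}$ as a Taylor polynomial in $s$ built from the jets of Step 1, extended by Seeley/Sobolev extension as in \cref{scat:reverse_peeling} and cut off by $\chi(s)$. Choose $\phi_{\Cbar_u,\mathrm c}$ as the remaining $s^{a_-}$-conormal piece, again cut off. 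Since $\abs{u}\le1$, the condition $s<1$ gives $v<\abs{u}\le 1$, so the support condition holds.

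\textbf{Step 3: uniformity in $u$.} The relevant vector fields are $r\partial_v\sim\abs{u}\partial_v=\partial_s$, $v\partial_v=s\partial_s$ and the rotations, and each is scale invariant. Hence the norms in \cref{scat:eq:small_extension} reduce to fixed norms in $s$ of coefficients that are $O(\abs{u}^{a_0})$. Compatibility then holds by construction: the matched transversal derivatives at the corner are exactly the jets of $\phi$, and those satisfy the transport equations because $\phi$ is a genuine solution.

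\textbf{Main obstacle.} The main difficulty is to check that the jets of $\phi$ along $\C$, measured with $r\partial_v$, are uniformly $O(\abs{u}^{a_0})$ across all scales $u$, including the conormal remainder. This requires translating the assumed interior membership in $\A+\Aext$ (the weights $r^{-a_0}\rho_-^{-a_-}$) to the sphere $\C\cap\Cbar_u$. I would do this directly from the definitions of the norms on $\D_{\mathrm{in}}$, restricting to $v/r\in[-1/10,0]$ near the corner.
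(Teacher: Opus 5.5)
Your construction is the one the paper has in mind. The paper gives no separate proof, only the remark that the data on $\Cbar_u$ become small ``simply by taking $u$ sufficiently close to $0$'', using the jet-restriction-plus-extension argument of \cref{scat:reverse_peeling}. Your treatment of the smooth part is correct. On $\C\cap\Cbar_u$ one has $r=\abs{u}$, so $\frac{v^j}{j!}\partial_v^j\phi=\frac{s^j}{j!}\,(r^j\partial_v^j\phi)|_{r=\abs{u}}$. Since $r^j\partial_v^j$ is a constant-coefficient combination of $(r\partial_v)^i$, $i\leq j$, these coefficients are $O(\abs{u}^{a_0})$ directly from the definition of $\Aext^{\infty;a_0}(\D_{\mathrm{in}})$; the ``main obstacle'' you name is therefore immediate. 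Moreover $r\partial_v=(1+s)\partial_s$ on $\Cbar_u$, so the Taylor polynomial cut off by $\chi(s)$ satisfies the unweighted bound in \cref{scat:eq:small_extension} uniformly in $u$.

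The gap is in Step 3, for the conormal piece.
\begin{itemize}
\item The norm $\A^{k;a_-}(\Cbar_u)$ carries the weight $v^{-a_-}$, not $\rho_-^{-a_-}$, and $v^{-a_-}=\abs{u}^{-a_-}s^{-a_-}$ is not scale invariant. Your scaling argument would work with $\rho_-^{-a_-}$, but that is not the norm in the statement.
\item A nonzero piece $\abs{u}^{a_0}\chi(s)\tilde\phi_{\mathrm c}(s)$ with $\tilde\phi_{\mathrm c}\sim s^{a_-}$, which is what Steps 1--2 produce, has $\A^{k;a_-}(\Cbar_u)$-norm of order $\abs{u}^{a_0-a_-}$. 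This is not $O(\abs{u}^{a_0})$ as $u\to0$ once $a_->0$, which holds in every application (e.g.\ $a_0>5$ in \cref{scat:cor:scrip}). That smallness is exactly what is used downstream.
\item The ``remaining conormal piece'' is also not well defined. The interior solution does not live on $\Cbar_u$; only its one-sided jets on $\C$ are available, so the Step 1 expansion of $\phi|_{\Cbar_u}$ has no meaning as written.
\end{itemize}
The repair is to take $\phi_{\Cbar_u,\mathrm c}\equiv0$.
\begin{itemize}
\item The $\A^{\infty;a_0,a_-}(\D_{\mathrm{in}})$ part of $\phi$ satisfies $\partial_v^j(\cdot)=O(r^{a_0-a_-}\abs{v}^{a_--j})$, so its jets of order $j<a_-$ vanish on $\C$.
\item Conormal compatibility only requires matching up to order $\min(a_0+\frac{d-1}{2},k)<a_-$.
\item Hence $\phi_{\Cbar_u}:=\chi(v/\abs{u})\sum_{j=0}^{N}\frac{v^j}{j!}\partial_v^j\phi|_{\C\cap\Cbar_u}$ with $N=\max\bigl(0,\lfloor a_0+\frac{d-1}{2}\rfloor\bigr)$ already works.
\item The transversal derivatives it induces along $\C$ coincide with those of $\phi$ by uniqueness for the transport ODEs, which gives the required $r^{a_0}$ decay.
\end{itemize}
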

    In the rest of this section, we study solutions to 
    \begin{equation}\label{loc:eq:characteristic_u}
				\Box\phi=\mathcal{N}(\phi)+V\phi+f,\quad \phi|_{\C}=\phi|_{\C},\, \phi|_{\Cbar_u}=\phi_{\Cbar_u},
    \end{equation}
    with the data assumed to satisfy the bound  \cref{scat:eq:small_extension}.
    
    The only important boundary hypersurface to consider is $\scrip=\{v=\infty\}$, and we define the corresponding function spaces
    \begin{equation}
        \norm{f}_{\A^{k;a}(\D^{\scrip})}=\norm{r^{a}\Vbs^k f}_{L^\infty(\D^{\scrip})},\qquad \Vbs:=\{x_i\partial_{x_j}-x_j\partial_{x_i},v\partial_v,\partial_u\}.
    \end{equation}
    Note that these function spaces are oblivious to the actual conormal regularity proved in \cref{scat:cor:main}.
    The relevant notion of admissibility in $\D^{\scrip}:=\D_{-1,\infty}$ is
    \begin{definition}
        Fix $a\in\R_{<\frac{d-1}{2}}$.
        We say that a nonlinearity $\mathcal{N}$ is admissible near null infinity with gap $\delta>0$ and regularity $k$ if for all $\phi_1, \phi_2$ with  $\norm{\phi_1,\phi_2}_{\A^{k;a}(\D^{\scrip})}\leq C$ and $j\leq k$ it holds that 
        \begin{subequations}
            \begin{align}
                r^{a+1+\delta}\abs{\Vbs^j\mathcal{N}[\phi_1]}&\lesssim_{k,C} \abs{r^a\Ve\Vbs^j\phi_1}^2,\\
            r^{a+1+\delta}\abs{\Vbs^j\mathcal{N}[\phi_1+\phi_2]}&\lesssim_{k,C} \abs{r^a\Ve\Vbs^j\phi_1}\abs{r^a\Vbs^j\Ve\phi_2}.
            \end{align}
        \end{subequations}
        We call a potential admissible near null infinity if $V\in \A^{k;1+\delta}(\D^{\scrip})$.
    \end{definition}

    In sufficiently high regularity, we may conclude existence up to $\scrip$.
    \begin{cor}\label{scat:cor:null_inf}
        Let $k'\geq 5+\frac{d-1}{2}$ and let $\mathcal{N},V$ be admissible for $a<\frac{d-1}{2}$.
        Fix $v_1>0$.
        There exists $\epsilon_0$ sufficiently small depending on $\mathcal{N},V,v_1$ such that the following holds:
        Let $\phi$ be a solution of \cref{loc:eq:characteristic} in $\D_{v_1}\setminus\D_{v_1/2}$ with $\psi_{\Cbar}|_{v>v_1/2}=0$
        \begin{equation}
            \norm{\phi}_{C^{k'}(\D_{v_1}\setminus\D_{v_1/2})}\leq \epsilon\leq\epsilon_0.
        \end{equation}
        Then $\phi$ admits a solution to \cref{loc:eq:characteristic} in $\D^{\scrip}$
    \end{cor}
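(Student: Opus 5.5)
The plan is to reduce the statement, as in the proof of \cref{scat:cor:local}, to the global scattering result of \cite{kadar_scattering_2025} near future null infinity, now applied in the region $\{v>v_1/2\}$ rather than near spacelike infinity. The data are posed on $\C_{-1,v_1/2}$-type hypersurface $\{v=v_1/2\}\cap\{u\in(-1,0)\}$ together with the Cauchy horizon piece, or equivalently on $\{v=v_1/2\}\cup\Cbar$. Since $\psi_{\Cbar}|_{v>v_1/2}=0$, the outgoing data vanish beyond $v_1/2$. The only data are therefore the restriction of $\phi$ to the ingoing cone $\{v=v_1/2\}$, which is small in $C^{k'}$ by assumption.

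\textbf{Step 1: recast the region.} Work in $\D^{\scrip}\cap\{v\geq v_1/2\}$. For $v\geq v_1/2$ and $u\in[-1,0]$ we have $r\sim v$, so the region is a neighbourhood of a compact piece of $\scrip$, truncated in $u$. Next I conformally compactify via $\psi=r^{\frac{d-1}{2}}\phi$, with $\partial_v$ replaced by $r^2\partial_v$. The equation becomes $\mathcal{P}_d\psi=r^{\frac{d-1}{2}}(\mathcal{N}+V\phi+f)$.

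\textbf{Step 2: check the smallness and admissibility hypotheses.} The weights $r^{a}$ with $a<\frac{d-1}{2}$ and the vector fields $\Vbs$ are exactly those of the scattering theorem near $\scrip$ (\cref{app:thm:KK25}). The admissibility conditions (gain $r^{-1-\delta}$ for $V$, quadratic bounds with $r^{-a-1-\delta}$ for $\mathcal{N}$) are precisely the hypotheses there, giving integrable-in-$v$ source terms. The $C^{k'}$ smallness on the compact set $\D_{v_1}\setminus\D_{v_1/2}$ yields, by restriction, data on $\{v=v_1/2\}$ whose weighted $\A^{k';a}$ norm is bounded by $C(v_1)\epsilon$. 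Here the $u$-range is compact and $r\sim v_1$, so weights are harmless, and $f$ is assumed supported or decaying as admissible.

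\textbf{Step 3: run the small-data theorem.} Apply the existence part of \cite{kadar_scattering_2025} with trivial data on $\scrim$ (i.e.~on $\Cbar$ beyond $v_1/2$). I would do this as a bootstrap in $\A^{k';a}(\D^{\scrip})$, using the energy estimates with $r^p$-type weights from \cref{sec:app:sketch}. The nonlinear error is quadratic in a norm of size $\epsilon$, closing for $\epsilon_0$ small; the potential is handled linearly thanks to the $\delta$ gap. The loss of $\frac{d-1}{2}$ derivatives from Sobolev embedding on spheres plus commutation explains $k'\geq5+\frac{d-1}{2}$.

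\textbf{Main obstacle.} The delicate point is matching at the corner $\{v=v_1/2,u=0\}$, i.e.~at $\CH$. There $u\to0$, and in $\Vbs$ only $\partial_u$ appears, not $u\partial_u$. This requires $\phi$ to be genuinely $C^{k'}$ up to $\CH$, which is why the hypothesis uses the $C^{k'}$ norm; its conormal part from \eqref{scat:eq:improved_reg} must be absent or absorbed. I would first verify that $u=0$ is an interior-type boundary for the $\scrip$ problem, with no weight degeneration, so that standard local characteristic theory \cite{rendall_reduction_1990} glues the solution there.
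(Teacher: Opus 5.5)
Your proposal follows the paper's route: the paper's entire proof is that the statement follows from \cref{app:thm:KK25}, i.e.\ the small-data existence theory of \cite{kadar_scattering_2025} near $\scrip$ with trivial outgoing data and admissible $\mathcal{N},V$, and your Steps 1--3 unpack exactly that. The one point you overstate is the ``main obstacle'': for $v\geq v_1/2$ and $u<0$ one has $r\geq v_1/2$, no weight in $\Vbs$ degenerates at $u=0$, and the region $\{-1<u<0,\ v>v_1/2\}$ is determined by data on its past boundary alone, so $\{u=0\}$ is just a regular future null boundary controlled by the $C^{k'}$ hypothesis and no gluing via \cite{rendall_reduction_1990} is needed.
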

    \begin{proof}
        This follows from \cref{app:thm:KK25}.
    \end{proof}

    Combining with the local existence result, we obtain existence all the way to $\scrip$:
    \begin{cor}\label{scat:cor:scrip}
        Fix $a_0>5$ and $k$ sufficiently large so that $k'$ from \cref{scat:cor:main} satisfies $k'>5+\frac{d-1}{2}$.
        Let $\vec{a}=(a_0,a_0,a_+)$ for some $a_+<0$, and fix corresponding admissible $\mathcal{N},V$ and compatible initial data $\phi_{\C},\phi_{\Cbar_u}$ satisfying \cref{scat:eq:small_extension}.

        Then, if $u<0$ is sufficiently close to 0, the solution $\phi$ of \cref{loc:eq:characteristic_u}  exists in $\D_{u,\infty}$.
    \end{cor}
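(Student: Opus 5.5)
The plan is to split $\D_{u,\infty}$ into the finite region $\D_{u,v_1}$ and the far region $\{v>v_1/2\}$, and to glue the finite-region solution to the null-infinity result \cref{scat:cor:null_inf}. The required smallness will come from taking $u$ close to $0$.

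\textbf{Step 1: solve up to $\CH$ with $u$-uniform bounds.} I will first rescale the problem posed on $\C\cup\Cbar_u$ to one posed on $\C\cup\Cbar_{-1}$ via $(t,x)\mapsto(t,x)/\abs{u}$. Since $\Vb$ is scaling invariant, the bound \cref{scat:eq:small_extension} says that the rescaled data have size $\lesssim\abs{u}^{a_0}$ in the compatible (respectively conormal compatible) norms with decay $a_0$. For $\mathcal{N},V$ admissible with respect to $\vec a$, the rescaled nonlinearity and potential remain admissible with uniform constants. The potential is of scaling-critical form $r^{-2}$, and the nonlinearity picks up only favourable powers of $\abs{u}$ because the relevant exponents carry a positive gap $\delta$.

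I then apply \cref{scat:cor:main}. Because the data are small, the existence time $v_1$, which depends inversely on the size of the data, can be taken uniform; indeed, for $a_0>5$ the peeled approximate solution $\phi_1$ of \cref{scat:peeling} is itself $O(\abs{u}^{a_0})$. Since $a_0+\frac{d-1}{2}>0$, part b) of \cref{scat:cor:main} gives \cref{scat:eq:improved_reg} in $\D'=\D_{u,v_1}\setminus\D_{u,v_1/2}$, with norm $\lesssim\abs{u}^{a_0}$ up to harmless factors. Here the choice $a_0>5$ and $k'>5+\frac{d-1}{2}$ ensures that $\A^{k';a_0+\frac{d-1}{2}-}\subset C^{5+\frac{d-1}{2}}$ on $\D'$, so that the solution is genuinely $C^{k'}$ up to $\CH$ with small norm $\epsilon(u)\to0$ as $u\to0$.

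\textbf{Step 2: extend to $\scrip$.} The data $\phi_{\Cbar_u}$ are supported in $\{v<1\}$. If necessary I shrink $v_1$ and choose the cutoff so that the data vanish for $v>v_1/2$; this costs nothing, by the support freedom in the lemma preceding \cref{scat:eq:small_extension}. Then $\psi_{\Cbar}|_{v>v_1/2}=0$, and the solution on $\D_{v_1}\setminus\D_{v_1/2}$ has $C^{k'}$ norm at most $\epsilon(u)$. Choosing $u$ so close to $0$ that $\epsilon(u)\le\epsilon_0(\mathcal{N},V,v_1)$, \cref{scat:cor:null_inf} produces a solution in $\{v>v_1/2\}$ up to $\scrip$. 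By uniqueness on the overlap $\{v_1/2<v<v_1\}$, this solution glues with the solution from Step 1 to give a solution on all of $\D_{u,\infty}$.

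\textbf{Main obstacle.} The hardest point is the uniformity in $u$: $\epsilon_0$ depends on $v_1$, so $v_1$ must not shrink as $u\to0$. The admissibility of $\mathcal{N},V$ near null infinity must also follow from admissibility for $\vec a$ combined with the regularity of $\phi$ there. I would address this by tracking the scaling explicitly. Under rescaling, the finite region $\D_{u,v_1}$ corresponds to $\D_{-1,v_1/\abs{u}}$, so I would instead fix $v_1$ in original coordinates and use the smallness $\abs{u}^{a_0}$ of the data to close the bootstrap of \cref{scat:cor:local} on the longer region. For small data, the existence time in \cref{scat:cor:local} is limited only by the size of the data, because the linear estimates of \cite{kadar_scattering_2025} are global.
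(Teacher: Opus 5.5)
\textbf{Gap in Step 1.} The step that fails is the one the corollary hinges on: that, with $v_1$ fixed, the solution on $\D'=\D_{u,v_1}\setminus\D_{u,v_1/2}$ has $C^{k'}$ norm $\epsilon(u)\to0$. Your smallness $\abs{u}^{a_0}$ comes purely from rescaling, and the decay-$a_0$ norms are scale covariant. Since $\rho_\pm$ and $\Vb$ are scale invariant and $\tilde r=r/\abs{u}$, the rescaled weighted norms are exactly $\abs{u}^{a_0}$ times the original ones. So ``rescaled data of size $\abs{u}^{a_0}$'' says no more than $\norm{\phi_{\C}}_{\A^{k;a_0}}\lesssim 1$ in the original coordinates, and that bound does not improve as $u\to0$.

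On the fixed rescaled region this only gives existence on $\D_{u,\abs{u}\tilde v_1}$, which shrinks, as you noticed. On the long region $\tilde\D_{-1,v_1/\abs{u}}$ that you propose instead, the weight eats the gain. At $\tilde r\sim v_1/\abs{u}$, which is where $\D'$ sits, the bound $\abs{\tilde\phi}\lesssim\abs{u}^{a_0}\tilde r^{a_0}$ just reads $\abs{\phi}\lesssim v_1^{a_0}$. Your ``harmless factors'' are exactly $(v_1/\abs{u})^{a_0}$. So even if the bootstrap closes there (the relative size of the nonlinearity is governed by $v_1$, not by $u$), you only recover $u$-independent bounds on $\D'$. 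Invoking the globality of the linear estimates does not change this. Hence the hypothesis $\epsilon\le\epsilon_0(\mathcal{N},V,v_1)$ of \cref{scat:cor:null_inf} is never reached.

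\textbf{The missing idea: trade decay for smallness.} This is essentially the whole content of the paper's proof. Because admissibility comes with a gap $\delta$, you may pick $a_0'<a_0$ such that $\mathcal{N},V$ remain admissible for $(a_0',a_0',a_+)$. On the part of $\C$ inside $\D_{u,v_1}$ one has $r\le\abs{u}$, so the data there have $\A^{k;a_0'}$ norm $\lesssim\abs{u}^{a_0-a_0'}$. Together with \cref{scat:eq:small_extension}, the data are therefore genuinely small in the decay-$a_0'$ norms. Applying \cref{scat:cor:main} directly in the original coordinates on $\D_{u,v_1}$ then gives $v_1$ independent of $u$, and the norms in \cref{scat:eq:improved_reg} on $\D'$ are bounded by $\abs{u}^{a_0-a_0'}$. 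From there your Step 2, the appeal to \cref{scat:cor:null_inf}, goes through as written. No rescaling is needed.
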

    \begin{proof}
        We may pick $a_0'<a_0$, such that $\mathcal{N}$ is still admissible with respect to $(a_0',a_0',a_+)$ and moreover the data along $\C$ satisfies $\norm{\phi}_{\A^{k;a_0'}(\C\cap\{u>u_0\})}\lesssim \abs{u_0}^{a_0-a_0'}$.
        Using \cref{scat:eq:small_extension}, we may obtain a solution from \cref{scat:cor:main} in $\D_{v_1,0}$ with $v_1$ independent of $u$ with the norms in \cref{scat:eq:improved_reg} bounded by $\abs{u}^{a_0-a_0'}$.
        If $|u|$ is sufficiently small, then we may apply  \cref{scat:cor:null_inf} to conclude the result.
    \end{proof}

    \subsection{Lower bound on regularity of \texorpdfstring{$\CH$}{the Cauchy horizon}}\label{sec:lower_bound}
    Following the discussion in \cref{rem:sharP}, we finally showcase how to study the sharp regularity of the Cauchy horizon in the special case of $d=3$ spherical symmetry\footnote{These restrictions are merely to ease notation and to get a simple expression for the approximate solution $\phi_{\mathrm{app}}$ below. See \cite[Section 10]{kadar_scattering_2025} for the general case.} and no potential with $\rho_0^{-2}$ behaviour.\footnote{Such borderline potentials appear for Type I and energy-supercritical Type II singularities. However, in the latter case the potential has the exact form $cr^{-2}$ for some $c\in\R$ and thus the approach computations of \cite{kadar_scattering_2025} suffice.}
    That is, in this section we assume $\mathcal{N}$ to be an admissible nonlinearity with gap $\delta>0$ and $V\in \A^{\infty;0,2+\delta,0}(D_{-1,1})$.
    We also assume that $a_0+1>0$.
    These assumptions in particular cover \cref{nw:d=3}.
    Next, we follow the more general \cite[Section 7-8]{kadar_scattering_2025}, applicable outside of symmetry in all dimensions.

    Let us introduce the function spaces measuring smoothness towards $\CH$ but only conormal regularity towards $\{\rho_0=0\}$ in $\D^+=\D_{-1,1}\cap\{t>0\}$
    \begin{equation}
        \norm{f}_{\Aext^{k;a_0}(\D^+)}:=\norm{\rho_0^{-a_0}\{r\pu,r\pv\}f}_{L^\infty(\D^+)}.
    \end{equation}
    
    Fix $\phi_{\C}$ in \cref{loc:eq:characteristic} satisfying an additional lower bound:
    \begin{equation}\label{scat:eq:lower_bound}
        \phi_{\C}=r^{a_0}+\A^{\infty;a_0+\delta}(\C).
    \end{equation}
    We may subtract the leading order behaviour by noting that $\phi_{\mathrm{app}}=r^{-1}\abs{u}^{a_0+1}$ is a solution of $\Box\phi_{\mathrm{app}}=0$.
    Thus, we can reduce \cref{loc:eq:characteristic} for $\bar\phi=\phi-\phi_{\mathrm{app}}$ under the assumption of \cref{scat:eq:lower_bound} to
    \begin{equation}
        \Box\bar\phi=\mathcal{\bar\phi+\phi_{\mathrm{app}}}+V\bar\phi+V\phi_{\mathrm{app}}+f,\qquad\bar\phi|_{\C}=\phi_{\Cbar}-r^{a_0},\,\bar\phi|_{\Cbar}=\phi_{\Cbar}-r^{-1}.
    \end{equation}
    Let us observe that the additional inhomogeneous terms coming from $\phi_{\mathrm{app}}$ satisfy
    \begin{equation}
        V\phi_{\mathrm{app}},\mathcal{N}[\phi_{\mathrm{app}}]\in \A^{\infty;\delta-1,\delta+a_0-2,\delta+a_0}(\D_{-1,1})+\Aext^{\infty;\delta+a_0-2}(\D^+).
    \end{equation}
    The result of \cite[Theorem 8.1]{kadar_scattering_2025} applies also in this setting to conclude that $\bar\phi$ satisfies the following improvement over \cref{scat:eq:improved_reg} in the region $\D'=\D_{-1,v_1}\setminus \D_{-1,v_1/2}$
    \begin{equation}\label{scat:eq:sharp}
        \bar\phi\in \A^{\infty;a_0+\delta+1-}(\D')+C^\infty(\D').
    \end{equation}
    Therefore, for $a_0\notin\N$ the derived upper bound of regularity in \cref{scat:eq:improved_reg} is sharp.
    We summarise the above as:
    \begin{lemma}\label{scat:lem:sharp}
        Let $d=3$, $a_0+1>0$, $k=\infty$ and $\vec{a},\phi_C,\phi_{\Cbar},\mathcal{N},V,\phi$ be as in \cref{scat:cor:main}.
        Assume moreover that $\phi_{\C}$ satisfies the lower bound in \cref{scat:eq:lower_bound} and $\phi_{\C},\phi_{\Cbar}$ are spherically symmetric.
        Then the solution $\bar\phi=\phi-r^{-1}\abs{u}^{a_0+1}$ satisfies the sharp regularity \cref{scat:eq:sharp} towards $\CH$.
        In particular, if $a_0+1\notin\N$, then $\phi\notin C^{a_0+1+}$.
    \end{lemma}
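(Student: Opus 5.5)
The argument splits into three steps: subtract an explicit leading-order solution, show the remainder has improved regularity at $\CH$ via \cite[Theorem 8.1]{kadar_scattering_2025}, and then read off that the leading term is not $C^{a_0+1+}$.

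\textbf{Step 1: subtract the explicit leading term.} In $d=3$ spherical symmetry, $\Box(r^{-1}F(u))=0$ for any $F$. Hence $\phi_{\mathrm{app}}=r^{-1}|u|^{a_0+1}$ is an exact solution of the linear wave equation. On $\C=\{v=0\}$ we have $|u|=r$, so $\phi_{\mathrm{app}}|_{\C}=r^{a_0}$, which matches the leading part of $\phi_{\C}$ in \cref{scat:eq:lower_bound}. Setting $\bar\phi=\phi-\phi_{\mathrm{app}}$ gives the reduced equation written above. Its new data satisfy $\bar\phi|_{\C}\in\A^{\infty;a_0+\delta}(\C)$, while $\bar\phi|_{\Cbar}$ differs from smooth data by the smooth function $\phi_{\mathrm{app}}|_{\Cbar}$; this is harmless because $\Cbar$ is away from $r=0$ and $u=0$. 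The new source terms are $V\phi_{\mathrm{app}}$ and $\mathcal{N}[\bar\phi+\phi_{\mathrm{app}}]-\mathcal{N}[\bar\phi]$. I would check directly from $V\in\A^{\infty;0,2+\delta,0}$ and the admissibility bound \cref{scat:eq:Nquad} that these lie in $\A^{\infty;\delta-1,\delta+a_0-2,\delta+a_0}(\D_{-1,1})+\Aext^{\infty;\delta+a_0-2}(\D^+)$. The key point is that $\phi_{\mathrm{app}}$ is smooth in $u$ across $\CH$ away from $r=0$, so it contributes only $\Aext$-type terms near $\CH$, and its size near $\rho_0=0$ is $r^{a_0}$.

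\textbf{Step 2: improved estimate for the remainder.} I would run the argument of \cref{scat:cor:main} (peeling via \cref{scat:peeling}, then \cref{scat:cor:local}) on $\bar\phi$, now with decay rate $a_0+\delta$ along $\C$ and with the split source above. The splitting into $\A$ and $\Aext$ parts is exactly the setting of \cite[Theorem 8.1]{kadar_scattering_2025}, transported by the coordinate change in the proof of \cref{scat:cor:local}. That theorem propagates the conormal rate $a_0+\delta+\frac{d-1}{2}=a_0+\delta+1$ to $\CH$, i.e.\ to $\scrip$ in the new coordinates, modulo a smooth part. This yields \cref{scat:eq:sharp}. Existence of $\bar\phi$ is already guaranteed by \cref{scat:cor:main}, so only this regularity statement is new.

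\textbf{Step 3: read off the sharpness.} We have $\phi=\phi_{\mathrm{app}}+\bar\phi$, and $\bar\phi\in C^{a_0+1+\delta'}$ near $\CH$ in $\D'$ for some $\delta'>0$. On the other hand $\phi_{\mathrm{app}}=r^{-1}|u|^{a_0+1}$ with $r\sim v_1$ bounded away from zero in $\D'$. For $a_0+1\notin\N$, the function $|u|^{a_0+1}$ is exactly $C^{a_0+1}$ and not better as $u\to0^-$. Hence $\phi\notin C^{a_0+1+}$.

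\textbf{Main obstacle.} I expect the main obstacle to be Step 2, namely verifying that the hypotheses of \cite[Theorem 8.1]{kadar_scattering_2025} hold after the inversion $(\underline u,\underline v)=(-v^{-1},-u^{-1})$. In particular one must check that the $\Aext(\D^+)$ sources correspond to sources with the required smooth expansion at $\scrip$, and that the potential $V$, which lacks a $\rho_0^{-2}$ tail, does not generate logarithmic or non-integer terms in $u$. I would first try to confirm that the conjugation factors $r^{\frac{d-1}{2}}\underline r^{-\frac{d-1}{2}}/(\underline u^2\underline v^2)$ are smooth functions of $u$ near $\CH$.
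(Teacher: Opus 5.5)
Your proposal is correct and follows the paper's own argument: subtract the exact solution $\phi_{\mathrm{app}}=r^{-1}\abs{u}^{a_0+1}$ (which equals $r^{a_0}$ on $\C$), place the resulting inhomogeneities in $\A^{\infty;\delta-1,\delta+a_0-2,\delta+a_0}(\D_{-1,1})+\Aext^{\infty;\delta+a_0-2}(\D^+)$, invoke \cite[Theorem 8.1]{kadar_scattering_2025} to obtain \cref{scat:eq:sharp}, and read off the failure of $C^{a_0+1+}$ from the explicit $\abs{u}^{a_0+1}$ term, which you spell out more carefully than the paper does. One bookkeeping correction: the fixed sources should be $\mathcal{N}[\phi_{\mathrm{app}}]$ and $V\phi_{\mathrm{app}}$, with $\mathcal{N}[\bar\phi+\phi_{\mathrm{app}}]-\mathcal{N}[\phi_{\mathrm{app}}]$ treated as the (still admissible) nonlinearity for $\bar\phi$ as in \cref{scat:peeling}; your choice $\mathcal{N}[\bar\phi+\phi_{\mathrm{app}}]-\mathcal{N}[\bar\phi]$ depends on the unknown $\bar\phi$, so it cannot be fed into that theorem as a fixed inhomogeneity.
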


	\section{Applications and proof of \texorpdfstring{\cref{in:thm:main}}{the main theorem}}\label{sec:appandproof}
    In this section, we use \cref{scat:cor:main} to prove \cref{in:thm:main}.
    To do so, we need to find an appropriate vector $\vec{a}\in\R^3$ and show that the corresponding nonlinearity is admissible. We will do this first for the power nonlinear wave equation, then for the wave maps equation, where for the latter, we restrict to corotational perturbations. 
    
	\subsection{The power nonlinear wave equation\texorpdfstring{: $\Box\phi+\phi^p=0$}{}}
    We first consider Type II blow-up for \cref{in:eq:nonlin} and start with case \cref{nw:d=3} for which $d=3$, $p=5$.
    First, the nonlinearity $\phi^5$ is admissible for $\vec{a}=(0,a_0,0)$ so long as $a_0>-1/2$; indeed, admissiblity requires that $5a_0+2>a_0$.
    In order to be able to apply \cref{scat:cor:local}, we need to peel off sufficiently many terms via \cref{scat:peeling} so that the peeled off solution has $a_->a_0+\frac{d-1}{2}=a_0+1>1/2$. 
    This is precisely the point of \cref{scat:cor:main}:
    Thus, given any $\nu>1$, we may apply \cref{scat:cor:main} with any $a_0>-\frac12$ satisfying $\frac{\nu}{2}-1>a_0$.
    This finishes case \cref{nw:d=3} since $\frac{\nu-2}{2}>a_0$.

    We move on to \cref{nw:d=4}, for which $d=4$, $p=3$. 
    Admissibility leads to the requirement that $a_0>-1$.
    Thus, given $\nu>1$, we may apply \cref{scat:cor:main} with any $a_0>-1$ satisfying $\frac{\nu}{2}>a_0+\frac32$.
    This finishes case \cref{nw:d=4} since $\nu-2>a_0$.

    Finally, we move on to Type I blow-up, i.e.~to \cref{type1:wave}, for which $p=3$, $d=7$. Here, we are perturbing around the solution $\phi_0\in\A^{0,-2,0}(\D_{-1,v_1})$ of \cref{in:thm:self_similar_existence} b), that is, $\bar\phi=\phi-\phi_0$ satisfies: 
     \begin{equation}
        \Box{\bar\phi}+3\phi_0^2\bar\phi+3\phi_0\bar\phi^2+\bar\phi^3=0.
    \end{equation}
The $\phi_0^2\in\A^{0,-2,0}(\D_{-1,1})$-term is an  admissible potential, and the nonlinearities $\phi_0\cdot \bar\phi^2,\bar\phi^3$ are admissible for $(0,a_0,0)$ with $a_0>-1$.
Since the solution is smooth across $\C$ along $\Cbar$, the result now follows.
The computation for \cref{nw:supercritical} is identical, with $a_0>-\frac{2}{p-1}$ and potential $\phi_0^{p-1}\in\A^{0,-2,0}(\D_{-1,1})$.

    \begin{remark}[Logarithmic admissibility 1]\label{applications:rem}
    We have not yet addressed case \cref{nw:d=4_log}; this is because the functional framework of \cite{kadar_scattering_2025} works with polynomially weighted function spaces and makes use of the admissibility \cref{scat:def:admissible} via Sobolev embedding, the one dimensional version of which is
        \begin{equation}
            \norm{f}_{L^\infty((0,1])}\lesssim \int_{(0,1]} \left(\{x\partial_x,1\}\partial f\right)^2 \frac{\dd x}{x}\qquad\forall f\in C^\infty((0,1]).
        \end{equation}
        In fact, one may replace $\rho_-^{-\delta}\rho_0^{-\delta}\rho_+^{-\delta}$ with a sufficiently small multiplicative factor $\epsilon\ll1$ in \cref{scat:eq:Nquad}.
        Note that for $a_0=0$ it holds that
        \begin{equation}
            \int \frac{\dd u}{u} e^{-2\sqrt{|\log(r)|}}<\infty,
        \end{equation}
        and so  \cite{kadar_scattering_2025}[Theorem 6.2] implies the existence of a solution in $\D_{-1,v_1}$ for \cref{nw:d=4_log}.

        \end{remark}
        
	\subsection{The wave maps equation in corotational symmetry}    
    Recall that in the $K=1$-corotational setting \cref{in:eq:wave_map} yields
	\begin{equation}
		(-\partial_t^2+\partial_r^2+\frac{d-1}{r}\partial_r)\tilde{\phi}-\frac{d-1}{2}\frac{\sin(2\tilde\phi)}{\abs{x}^2}=0.
	\end{equation}
	We separate the linear and the nonlinear parts as $\mathcal{N}[\phi]=\frac{d-1}{2}r^{-2}(\sin(2\phi)-2\phi)$ and $V\phi=\frac{d-1}{2}r^{-2}2\phi$. 
    Note that $V$ is always an admissible potential.
	
	The exterior stability of Type II singularity formation follows from noting that:
	\begin{equation}
		\phi\in\A^{0,a_0,0}(\D_{-1,1})\implies\mathcal{N}[\phi]\in\A^{0,2a_0-2,0}(\D_{-1,1}).
	\end{equation}
	Therefore the nonlinearity is clearly admissible for $a_0>0$.
	This covers \cref{wm:d=2_poly}.
   
    To also cover \cref{wm:d=2}, for which there is only finite regularity $\nu-1/2$, we have the additional condition that $\nu>a_0+1>1$.
	
	For Type I singularity formation, there is a background solution $\phi_0$, and thus we must write 
	\begin{equation}
		\frac{\sin(2\phi_0+2\phi)}{2r^2}=\frac{\cos(2\phi_0)}{r^2}\phi+\mathcal{N}_{\phi_0}[\phi].
	\end{equation}
	It is easy to check that $\mathcal{N}_{\phi_0}$ again is admissible for $(0,\nu,0)$ with $\nu>0$. The linear term is also admissible: $\frac{\cos(2\phi_0)}{r^2}\in\A^{0,-2,0}(\D_{-1,1})$.
	This covers \cref{type1}.
    A similar computation applies to \cref{wm:d>7}.

    \begin{remark}[Logarithmic admissibility 2]\label{applications:rem2}
        By the same reasoning, as in \cref{applications:rem}, we can also treat \cref{wm:d=2_log}. On the other hand,   in order to study \cref{wm:d=2_log_k} one needs to consider logarithmically weighted spaces. Since this is not included in \cite{kadar_scattering_2025}, we claim no results in this direction.
    \end{remark}
    \begin{remark}\label{remark:outsideofsymmetry}
        We do not make statements on the stability outside of corotational symmetry for \cref{in:eq:wave_map}, but the interested reader may wish to study \cref{subsec:extendingadmissiblity} for ways of proving such statements.
    \end{remark}

	\pagebreak
	\appendix
    \section{The scattering result of \texorpdfstring{\cite{kadar_scattering_2025}}{[KK25]} and an extension}\label{sec:appendix}
    In this section, we recall the main existence and regularity result of \cite{kadar_scattering_2025} and the corresponding notation in a simplified setting. 
    Furthermore, we will also give a sketch of a direct proof of \cref{scat:cor:local} in \cref{sec:app:sketch}, as well as an extension of our framework relevant for \cref{in:eq:wave_map} outside symmetry in \cref{subsec:extendingadmissiblity}.
    
    Let us use $u,v$ as in \cref{sec:not} and define
    \begin{equation}
        \brho_-=v/r,\quad \brho_0=-r/uv,\quad \brho_+=-u/r.
    \end{equation}
    We consider equations 
    \begin{equation}\label{app:eq:semi}
        \Box\phi=f+\mathcal{N}(\phi,x,t)+V\phi,\quad \phi(u,v,x/r)|_{v=1}=\phi_1(r,x/r),\quad \lim_{u\to-\infty} r^{\frac{d-1}{2}}\phi(u,v,x/r)=\psi_2(v,x/r),
    \end{equation}
    in the region $\D_{\mathrm{g}}=\{u\in(-\infty,-1),v\in(1,\infty)\}$.
    Let us also define the function space $\A^{s;\vec{a}}(\D_{\mathrm{g}})$ as in~\cref{not:eq:L_infty}.

    \begin{definition}[Admissibility]\label{app:def:admissible}
    Fix $\vec{a}\in\R^3$ and $w^{-1}=\brho_-^{a_-}\brho_0^{a_0}\brho_+^{a_+}$.
        For $\mathcal{N},V$ in \cref{app:eq:semi} we define admissibility at regularity $k\in\N$ as follows:
        \begin{itemize}
            \item We call a potential $V$ admissible if $V\in\A^{k;1+\epsilon,2,1+\epsilon}(\D_{\mathrm{g}})$ for some $\epsilon>0$.
            \item We call a nonlinearity $\mathcal{N}$ admissible for decay rates $(a_-,a_0,a_+)$ if for $\phi_1$, $\phi_2$ satisfying $\norm{\phi_i}_{\mathcal{A}^{k;a_-,a_0,a_+}(\D_{\mathrm{g}})}\leq C$  for $i=1,2$, it holds that for all $j\leq k$
        \begin{subequations}
            \begin{align}
                w\abs{\Vb^j(\mathcal{N}[\phi_2])}\brho_-^{-\delta-1}\brho_0^{-\delta-2}\brho_+^{-\delta-1}&\lesssim_{k,C} \abs{w\Vb^j\Ve \phi_2}^2\label{app:eq:Nquadratic}\\
            w\abs{\Vb^j(\mathcal{N}[\phi_1]-\mathcal{N}[\phi_1+\phi_2])}\brho_-^{-\delta-1}\brho_0^{-\delta-2}\brho_+^{-\delta-1}&\lesssim_{k,C} \abs{w\Vb^j\Ve\phi_1}\abs{w\Vb^j\Ve\phi_2}.
            \end{align}
        \end{subequations}
        \end{itemize}
    \end{definition}

    It is a computation to see that the local (\cref{scat:def:admissible}) and the global admissibility  definition (\cref{app:def:admissible}) are equivalent:
    Let  and let $(\underline{u},\underline{v})=\Phi(u,v)=(-v^{-1},-u^{-1})$, $\Phi:\D_{-1,1}\to\D_{\mathrm{g}}$ be the change of coordinates used in  \cref{scat:cor:local}.
    \begin{lemma}\label{app:lemma:equivalent}
        Let $\vec{a}\in\R^3$.
        For $\phi\in A^{\vec{a}}(\D_{-1,1})$, let us define $\phi^{\mathrm{g}}=\Phi_*\phi$.
        A nonlinearity $\mathcal{N}[\phi]$ in $\D_{-1,1}$ is admissible with weight $\vec{a}$ as in \cref{scat:def:admissible} if and only if 
        \begin{equation}
            \mathcal{N}^{\mathrm{g}}[\phi^{\mathrm{g}}]=\frac{\underline{r}^{-\frac{d-1}{2}}\Phi_*(r^{\frac{d-1}{2}})}{\underline{u}^2\underline{v}^2}\Phi_*\mathcal{N}\left[r^{-\frac{d-1}{2}}\Phi^*( \underline{r}^{\frac{d-1}{2}}\phi^{\mathrm{g}}) \right]
        \end{equation}
        is admissible in $\D^{\mathrm{g}}$ with respect to $\vec{a}^{\mathrm{g}}=(a_-+\frac{d-1}{2},a_0+d-1,a_++\frac{d-1}{2})$ as in \cref{app:def:admissible}.
    \end{lemma}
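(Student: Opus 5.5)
The plan is a direct change-of-variables computation: express the weights $\rho_\bullet$ and the vector field families $\Vb,\Ve$ on $\D_{-1,1}$ in terms of $\brho_\bullet$ and the corresponding families on $\D_{\mathrm{g}}$. Substituting these into the inequalities of \cref{scat:def:admissible}, they should turn into those of \cref{app:def:admissible}, with the conformal factor and the shift of weights $\vec{a}\mapsto\vec{a}^{\mathrm{g}}$ absorbing all discrepancies. Both directions of the equivalence follow at once, since every identification below is a two-sided comparison.

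\textbf{Step 1: the coordinates and weights.} Under $\underline{u}=-v^{-1}$, $\underline{v}=-u^{-1}$ we have $\underline{r}=\underline{v}-\underline{u}=\frac{1}{v}-\frac{1}{u}=\frac{r}{-uv}$ (with $r=v-u$). This gives the following relations, each up to harmless constant factors:
\begin{itemize}
\item $\underline{v}/\underline{r}=v/r=\rho_-$, so $\brho_-=\rho_-$;
\item $-\underline{u}/\underline{r}=-u/r=\rho_+$, so $\brho_+=\rho_+$;
\item $\brho_0=-\underline{r}/(\underline{u}\,\underline{v})=\frac{r}{-uv}\cdot(-uv)\cdot\frac{1}{-uv}\cdot(-uv)$, which reduces to a quantity comparable to $r=\rho_0$ once one uses $-uv\sim r^2\rho_-\rho_+$.
\end{itemize}
I will check the last relation carefully in each corner of the compactified region, where the $\rho_\pm$ factors redistribute. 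The conformal weights must also be tracked: $\underline{r}^{-\frac{d-1}{2}}r^{\frac{d-1}{2}}=(-uv)^{\frac{d-1}{2}}$ and $\underline{u}^{-2}\underline{v}^{-2}=u^2v^2$. Both are monomials in $\rho_\bullet$, so they shift the weights.

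\textbf{Step 2: the vector fields.} Since $\underline{u}\partial_{\underline{u}}=-v\partial_v$ and $\underline{v}\partial_{\underline{v}}=-u\partial_u$, and the rotations are unchanged, $\Vb$ is mapped to the same span of operators. For $\Ve$ one needs $\brho_+^{1/2}\brho_-^{1/2}=\rho_+^{1/2}\rho_-^{1/2}$, which follows from Step 1. Conjugating by the monomial weights from Step 1 only adds lower-order multiples of the identity, because $\Vb$ applied to $\rho_\bullet^{c}$ returns a constant multiple of $\rho_\bullet^{c}$. Hence $\Vb^j$ bounds with a weight $w$ are equivalent to $\Vb^j$ bounds with a monomially shifted weight. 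The same remark shows that the map $\phi\mapsto\phi^{\mathrm{g}}=\underline{r}^{-\frac{d-1}{2}}\Phi_*(r^{\frac{d-1}{2}}\phi)$ is an isomorphism $\A^{k;\vec{a}}\to\A^{k;\vec{a}^{\mathrm{g}}}$. The shift is exactly by the exponents of $(-uv)^{\frac{d-1}{2}}=\rho_0^{d-1}(\rho_-\rho_+)^{\frac{d-1}{2}}$, which accounts for $\vec{a}^{\mathrm{g}}=\vec{a}+(\frac{d-1}{2},d-1,\frac{d-1}{2})$.

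\textbf{Step 3: the inequalities.} I substitute the definition of $\mathcal{N}^{\mathrm{g}}$ into \cref{app:eq:Nquadratic}. The left side becomes $w^{\mathrm{g}}\cdot(-uv)^{\frac{d-1}{2}}u^2v^2\,\abs{\Vb^j\mathcal{N}[\phi]}\cdot\brho^{-\delta-(1,2,1)}$, and the right side becomes $\abs{w\Vb^j\Ve\phi}^2$ (up to equivalence), since $w^{\mathrm{g}}\phi^{\mathrm{g}}\sim w\phi$. Writing $u^2v^2\sim\rho_0^4\rho_-^2\rho_+^2$, and noting that $w^{\mathrm{g}}(-uv)^{\frac{d-1}{2}}=w$, the left side collapses to $w\abs{\Vb^j\mathcal{N}}\rho_-^{1-\delta}\rho_0^{2-\delta}\rho_+^{1-\delta}$, which is the left side of \cref{scat:eq:Nquad}. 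The difference estimate is handled identically.

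The main obstacle is the bookkeeping of Step 1 for $\brho_0$ versus $\rho_0$ together with the signs of the exponents, because the two definitions use opposite sign conventions ($\brho^{-\delta-1}$ versus $\rho^{1-\delta}$). I would first verify the exponent match on the model case $\mathcal{N}=\phi^p$, and only then write the general argument.
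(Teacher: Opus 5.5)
Your proposal is correct and follows essentially the same route as the paper, which likewise observes that $\Phi_*\rho_\bullet=\brho_\bullet$ holds exactly; in particular $\brho_0=\rho_0$ identically, so no corner-by-corner check is needed. The paper then lets the conformal factor $(-uv)^{\frac{d-1}{2}}u^2v^2=\rho_-^{2+\frac{d-1}{2}}\rho_0^{4+d-1}\rho_+^{2+\frac{d-1}{2}}$ absorb the shift $\vec a\mapsto\vec a^{\mathrm g}$ and convert $\brho^{-1-\delta}$ into $\rho^{1-\delta}$, exactly as in your Step 3.
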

    \begin{proof}
        Observe that $\Phi_*\rho_\bullet=\brho_\bullet$ for $\bullet\in\{\pm,0\}$ as well as $r=\rho_0$, $\underline{r}^{-1}=\brho_-\brho_0\brho_+$.
        We define the weights $(w^{\mathrm{g}})^{-1}=\brho_-^{a^{\mathrm{g}}_-}\brho_0^{a^{\mathrm{g}}_0}\brho_+^{a^{\mathrm{g}}_+}$ and $w^{-1}=\rho_-^{a_-}\rho_0^{a_0}\rho_+^{a_+}$.
        We only show that \cref{scat:eq:Nquad} implies \cref{app:eq:Nquadratic}, the rest following similarly.
        We compute
        \begin{multline}
            \abs{\mathcal{N}^{\mathrm{g}}[\phi^{\mathrm{g}}]}\lesssim \brho_-^{2+\frac{d-1}{2}}\brho_+^{2+\frac{d-1}{2}}\brho_0^{4+d-1}\abs{\Phi_*\mathcal{N}\left[\underline{r}^{\frac{d-1}{2}}\Phi^*(r^{-\frac{d-1}{2}} \phi^{\mathrm{g}}) \right]} \\
            \lesssim \brho_-^{2+\frac{d-1}{2}}\brho_+^{2+\frac{d-1}{2}}\brho_0^{4+d-1} \Phi^*(w^{-1} \rho_-^{-1+\delta}\rho_0^{-2+\delta}\rho_+^{-1+\delta})\abs{\Ve w^{\mathrm{g}} \phi^{\mathrm{g}}}^2\lesssim \brho_-^{1+\delta}\brho_+^{1+\delta}\brho_0^{2+\delta} (w^{\mathrm{g}})^{-1} \abs{\Ve w^{\mathrm{g}} \phi^{\mathrm{g}}}^2,
        \end{multline}
        where \cref{scat:eq:Nquad} was used in the second estimate.
    \end{proof}
    We may summarise the relevant part of \cite[Theorem 6.2, Lemma 7.2, Theorem 8.1]{kadar_scattering_2025} as
    \begin{theorem}\label{app:thm:KK25}
        a) Consider $(a_-,a_0,a_+)\in\R^3$ satisfying $a_->a_0>a_+$, $a_+<\frac{d-1}{2}$, $a_->\frac{d-1}{2}$ and $k\geq k_0= \frac{d+2}{2}+5$.
        Let $V,\mathcal{N}$ be admissible potential and nonlinearity of regularity $k+2$ respectively.
        For initial data $\psi_2=0$, $\Vb^k\phi_1\in\A^{0;a_-+\epsilon}(\C)$ and $f\in\A^{k;a_-+1+\epsilon,a_0+2+\epsilon,a_++1+\epsilon}(\D_{\mathrm{g}})$, there exists $\abs{u_1}$ sufficiently large such that \cref{app:eq:semi} has a solution in the region $\D_{\mathrm{g}}\cap\{u<u_1\}$ satisfying $\phi\in\A^{k-\frac{d+2}{2};\vec{a}-}(\D_\mathrm{g})$.

        b) Provided that $a_0>\frac{d-1}{2}$ and $\supp f\subset \{t<0\}$, it furthermore holds that $\phi \in \phi_1+\A^{s';(a_-,a_0,a_0-)}(\D_\mathrm{g})$, where
        \begin{equation}
            \rho_0^{-a_0}\rho_+^{-\frac{d-1}{2}}\left(\Vb\cup \{u^{-1}r^2\partial_v\}\right)^{k'} \phi_1\in L^\infty
        \end{equation}
        for some $k'\sim_{a_0} k$. 
    \end{theorem}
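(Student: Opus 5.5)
Since this statement is a summary of \cite[Theorem 6.2, Lemma 7.2, Theorem 8.1]{kadar_scattering_2025}, the plan is not to reprove the scattering theory. Instead I would check that the present simplified setting sits inside the hypotheses there, and then read off the conclusions. For part a), first translate the data: $\psi_2=0$ means vanishing radiation field on $\scrim$, and $\phi_1$ on $\{v=1\}$ is an outgoing cone datum with conormal decay $\brho_-^{a_-+\epsilon}$. Second, note that admissibility of $V$ and $\mathcal{N}$ in the sense of \cref{app:def:admissible}, with gap $\delta$, is exactly the structural hypothesis used in \cite[Theorem 6.2]{kadar_scattering_2025}. There, the right-hand sides must gain a factor $\brho_-^{1+\delta}\brho_0^{2+\delta}\brho_+^{1+\delta}$ relative to the weight of the solution, and the inhomogeneity $f$ is assumed to gain $(1,2,1)+\epsilon$, which is what is required.

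The proof itself runs in the usual order. I would first solve the linear problem $\Box\phi-V\phi=f$ with the given data in $\D_{\mathrm{g}}\cap\{u<u_1\}$ using the $\brho$-weighted energy estimates built from the currents $J^\pm$. In these estimates the condition $a_->a_0>a_+$, together with $a_+<\frac{d-1}{2}<a_-$, is exactly the range where the boundary terms at $\scrim$, at $\scrip$ and at spacelike infinity have favourable signs. The constant $\bar c$ in \cref{app:eq:divJpm} absorbs the scaling-critical part $\brho_0^2$ of $V$ for $|u_1|$ large. I would then commute with $\Vb$ up to order $k$ and pass from $L^2$ to $L^\infty$ via Sobolev embedding on the b-type geometry, losing $\frac{d+2}{2}$ derivatives and an $\epsilon$ in the weights. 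This is why the output space is $\A^{k-\frac{d+2}{2};\vec a-}$ and why $k\geq k_0$ is needed. Finally, I would close the nonlinear problem by a contraction or iteration in this space. The quadratic admissibility bounds show that $\mathcal{N}[\phi]$ lies in the same class as $f$ with an extra gain of $\brho^\delta$, so smallness comes from taking $|u_1|$ large. This is the content of Theorem 6.2 there.

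For part b), I would use the hypothesis $\supp f\subset\{t<0\}$ so that near $\scrip$ the equation is homogeneous, and then apply the conormal expansion of \cite[Lemma 7.2, Theorem 8.1]{kadar_scattering_2025}. The idea is to construct $\phi_1$ as the solution of the transport ODEs along $\scrip$ generated by the leading part $-\partial_{\underline u}\partial_{\underline v}+\underline r^{-2}(\dots)$ of the operator. This is where $(u^{-1}r^2\partial_v)$-regularity enters, since that vector field is smooth across $\scrip$ in the compactified picture. I would then subtract $\phi_1$ and rerun the estimates of part a) on the remainder, using the improved right-hand side to upgrade the $\brho_+$ weight from $a_+$ up to $a_0-$.

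The main obstacle is bookkeeping rather than any new idea: the weights, the vector fields ($\Vb$ versus $\Ve$, and the extra $r^2\partial_v/u$) and the regularity losses $k\mapsto k'$ in the present notation must match exactly the hypotheses of the cited results. In particular, the borderline potential $V\sim\brho_0^2$ must be covered by the constant $\bar c$ and not treated perturbatively. I would verify this first, since it is the one point where the admissibility class genuinely needs the strong linear estimate rather than a perturbative argument.
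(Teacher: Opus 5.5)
Your top-level route is the paper's: the statement is only a summary of \cite[Theorem 6.2, Lemma 7.2, Theorem 8.1]{kadar_scattering_2025}. Your outline of a) is the sketch in \cref{sec:app:sketch}: the currents $J^\pm$, $\bar c$ large for the scaling-critical $\brho_0^2$ part of $V$, commutation with $\Vb$, the embedding \cref{not:eq:sobolev} with its loss of $\frac{d+2}{2}$ derivatives and of $\epsilon$ in the weights, and smallness from the size of the region.

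The outline of b), however, has a step that fails. You propose to subtract $\phi_1$ and \emph{rerun the estimates of part a)} on the remainder, raising its $\brho_+$-weight from $a_+$ to $a_0-$, which lies above $\frac{d-1}{2}$. Those estimates cannot reach that range. The condition $a_+<\frac{d-1}{2}$ is not about boundary terms; it is the coercivity threshold of the bulk term $\mathrm{div}J^+$. In the local normalisation of \cref{sec:app:sketch} the threshold is $a_+=0$. There \cref{app:lemma:divergence} gives the coefficient of $\beta^{-2}(\partial_v\psi)^2$ in $2\,\mathrm{div}J^+$ as $-\partial_u f^v=f^v\big(\bar c\,r^{-1}-2a_+\abs{u}^{-1}\big)$, which is negative near $\CH$ once $a_+>0$.

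No other multiplier can fix this. Generic solutions have a nonvanishing radiation field $\lim r^{\frac{d-1}{2}}\phi$ at $\scrip$, i.e.\ $\brho_+$-index exactly $\frac{d-1}{2}$. So no estimate for $\phi$ in terms of data and source alone can give a better weight. The remainder is better only because $\phi_1$ carries the solution's own radiation field, which an energy estimate for the remainder cannot see.

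Relatedly, $\phi_1$ is not produced by transport ODEs along $\scrip$. Those ODEs generate the subleading coefficients from the radiation field, but the radiation field is an output of the solution. Moreover, a), with $a_+<\frac{d-1}{2}$, does not even show that it exists. As written, \emph{construct $\phi_1$, then estimate the remainder} is circular.

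The paper instead gets the improvement by integrating the equation and iterating \cref{app:lemma:uint} (cf.\ \cite[Corollary 4.1]{kadar_scattering_2025}). In the local picture, $\CH$ plays the role of $\scrip$, and $f=0$ on the $\CH$ side because $\supp f\subset\{t<0\}$. The steps are:
\begin{enumerate}
\item Integrate $\partial_v\partial_u\psi=r^{-2}(\mathring{\slashed{\Delta}}-c_d)\psi-\beta f$ in $v$ to see that $u\partial_u\psi$ is one power of $\rho_+$ better than $\psi$.
\item Integrate $\partial_u\psi$ in $u$ from $\CH$. This splits $\psi=\psi|_{\CH}+\psi_2$ with $\psi_2$ gaining a power of $\rho_+$, and it is also where the radiation field is shown to exist.
\item Feed the splitting back into the $v$-integral and repeat.
\end{enumerate}
This yields a finite expansion smooth at $\CH$, which is $\phi_1$ and explains its $u^{-1}r^2\partial_v$-regularity. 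The remainder's $\rho_+$-weight improves at each step, up to the cap set by the $\rho_0$-weight ($a_0$ in the global normalisation). The energy estimates of a) enter only once, to supply the starting weight $a_+<\frac{d-1}{2}$.
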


    \subsection{A sketch of a direct proof of \texorpdfstring{\cref{scat:cor:local}}{Proposition 3.1}}\label{sec:app:sketch}
    For the interested reader, we give a rough sketch of the proof of \cref{app:thm:KK25} implemented directly in the local setting of \cref{scat:cor:local} via the coordinate change $\Phi$, focusing on $d\geq3$ for convenience:
    Let us introduce the $L^2$ based norms
    \begin{equation}
        \norm{f}_{\Hb^{k;a_-,a_0,a_+}(\D_{u,v})}^2:=\sum_{\abs{\alpha}\leq k}\int_{D_{u,v}} (w\Vb^{\alpha}f)^2\mu_{\mathrm{b}},\qquad w=\prod_{\bullet}\rho_\bullet^{-a_\bullet}
    \end{equation}
    where $\mu_{\mathrm{b}}=\frac{\dd u}{u}\frac{\dd v}{v}\abs{\slashed{g}}$ is related to the Minkowskian measure as $\dd t\dd x^d=\mu_{\mathrm{b}}\rho_-\rho_0^{d-1}\rho_+$.
	Using weighted Sobolev embedding, we have the following inclusions
	\begin{equation}\label{not:eq:sobolev}
	\Hb^{k+\floor{\frac{d+2}{2}};a_-,a_0,a_+}(\D_{u,v})\subset\A^{k;a_-,a_0,a_+}(\D_{u,v})\subset \Hb^{k;a_--\epsilon,a_0-\epsilon,a_+-\epsilon}(\D_{u,v}),\qquad\forall\epsilon>0.
	\end{equation}

    Let us define the rescaled function $\psi:=\beta\phi$ for $\beta=r^{\frac{d-1}{2}}$ and corresponding twisted derivatives $\tilde\partial_\mu=r^{-\frac{d-1}{2}}\partial_\mu r^{\frac{d-1}{2}}$, $\tilde\partial_\mu^\dag=r^{\frac{d-1}{2}}\partial_\mu r^{-\frac{d-1}{2}}$ satisfying
    \begin{equation}
        \Box_\eta \phi=f\implies \left(\eta^{\mu\nu}\tilde\partial^\dag_\nu\tilde\partial_\mu -r^{-2}c_d\right)\phi=f,
    \end{equation}
    where $c_d r^{-2}=\frac{(d-1)(d-3)}{4r^2}=\beta^{-1}\Box \beta\geq0$.
    
    Let us define the energy momentum tensor
    \begin{equation}
        \tilde{\T}[\phi]_{\mu\nu}:=\tilde\partial_\mu\phi\otimes\tilde\partial_\nu\phi-\frac{\eta_{\mu\nu}}{2}\left(\eta^{\sigma\rho}\tilde\partial_\sigma\phi \tilde\partial_\rho\phi+c_dr^{-2}\phi^2\right).
    \end{equation}
    We recall the following explicit computation from  \cite{kadar_scattering_2025}[Lemma A.2]
    \begin{lemma}\label{app:lemma:divergence}
        Let $V_f=f^u(u,v)\partial_u+f^v(u,v)\partial_v$.
        Then the current $J=V_f\cdot\tilde{\T}[\phi]$ satisfies
        \begin{multline}
            2\mathrm{div}(\tilde{J})=-\beta^{-2}\pv f^u(\pu \psi)^2-\beta^{-2}\pu f^v(\pv \psi)^2+\\\Big(r^{-2}\beta^{-2}V_f(r^{2})-\beta^{-2}(\pu f^u+\pv f^v)\Big)\left(\abs{\sl \psi}^2+c_d(\psi)^2\right)+2\beta^{-1}V_f(\psi)\Box\phi.
        \end{multline}
    \end{lemma}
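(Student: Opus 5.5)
The lemma is a pointwise identity, and I would prove it by direct computation in double null coordinates $(u,v,\omega)$, $\omega=x/r$. The first step is to translate everything to $\psi=\beta\phi$, using $\tilde\partial_\mu\phi=\beta^{-1}\partial_\mu\psi$.

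With $t=u+v$ and $r=v-u$, the Minkowski metric is $\eta=-4\,\dd u\,\dd v+r^2\mathring{g}_{S^{d-1}}$. Hence $\eta_{uv}=-2$ and $\eta^{uv}=-\tfrac12$, and the volume density is a constant multiple of $r^{d-1}=\beta^2$. Writing $\abs{\sl\psi}^2$ for $r^{-2}\mathring{g}^{AB}\partial_A\psi\partial_B\psi$, the components of $\tilde{\T}$ are
\begin{align*}
\tilde{\T}_{uu}&=\beta^{-2}(\pu\psi)^2,\qquad \tilde{\T}_{vv}=\beta^{-2}(\pv\psi)^2,\\
\tilde{\T}_{uv}&=\beta^{-2}\pu\psi\pv\psi+\eta^{\sigma\rho}\tilde\partial_\sigma\phi\tilde\partial_\rho\phi+c_dr^{-2}\phi^2=\beta^{-2}\big(\abs{\sl\psi}^2+c_dr^{-2}\psi^2\big),
\end{align*}
where the cross terms cancel because $\eta^{\sigma\rho}\tilde\partial_\sigma\phi\tilde\partial_\rho\phi=-\beta^{-2}\pu\psi\pv\psi+\beta^{-2}\abs{\sl\psi}^2$. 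The mixed angular components $\tilde{\T}_{uA},\tilde{\T}_{vA}=\beta^{-2}\partial_{u,v}\psi\,\partial_A\psi$ will be needed as well. Raising an index gives
\begin{equation*}
J^u=-\tfrac12\big(f^u\tilde{\T}_{vu}+f^v\tilde{\T}_{vv}\big),\qquad J^v=-\tfrac12\big(f^u\tilde{\T}_{uu}+f^v\tilde{\T}_{uv}\big),\qquad J^A=r^{-2}\mathring g^{AB}\big(f^u\tilde{\T}_{Bu}+f^v\tilde{\T}_{Bv}\big).
\end{equation*}

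Next I would compute $\mathrm{div}J=\beta^{-2}\big(\pu(\beta^2J^u)+\pv(\beta^2J^v)\big)+\mathring{\nabla}_AJ^A$; the factors $\beta^{\pm2}$ are designed so that $\beta^2J^{u,v}$ is a quadratic expression in $\partial\psi$ with no $\beta$. I would sort the result into three groups.
\begin{enumerate}
\item Terms where the derivative falls on $f^u$ or $f^v$. These produce $-\beta^{-2}\pv f^u(\pu\psi)^2$ and $-\beta^{-2}\pu f^v(\pv\psi)^2$ from the $\tilde{\T}_{uu},\tilde{\T}_{vv}$ parts. From the $\tilde{\T}_{uv}$ parts they produce $-\beta^{-2}(\pu f^u+\pv f^v)\big(\abs{\sl\psi}^2+c_d r^{-2}\psi^2\big)$.
\item Terms where the derivative falls on the explicit $r^{-2}$ inside $\abs{\sl\psi}^2+c_dr^{-2}\psi^2$. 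Using $\pu r=-1$ and $\pv r=1$, these combine into $r^{-2}\beta^{-2}V_f(r^2)\big(\abs{\sl\psi}^2+c_d r^{-2}\psi^2\big)$.
\item Terms with second derivatives of $\psi$. The contributions $\pu\pv\psi$, the derivatives $\partial_{u,v}\abs{\sl\psi}^2$, the derivatives $\partial_{u,v}\psi^2$, and the angular divergence (which yields $\mathring{\slashed{\Delta}}\psi$ after combining with $\partial_{u,v}$ of the mixed terms) assemble into $\beta^{-2}V_f(\psi)\big(-4\pu\pv\psi+4r^{-2}\mathring{\slashed{\Delta}}\psi-4c_dr^{-2}\psi\big)$, up to the normalisation constant.
\end{enumerate}
By the conjugation identity already used in the proof of \cref{scat:cor:local}, $\beta\Box\phi=\big(-\pu\pv+r^{-2}(\mathring{\slashed{\Delta}}-c_d)\big)\psi$, which turns the third group into $2\beta^{-1}V_f(\psi)\Box\phi$. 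Finally I would read off the identity after matching the normalisation: the $c_d$ factor is written as $c_d(\psi)^2$ inside the bracket, which absorbs the $r^{-2}$ into the same convention as $\abs{\sl\psi}^2$.

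The computation is routine; the main obstacle is bookkeeping. Three places need care: the factors of $2$ and $4$ coming from $\eta_{uv}=-2$ (which fix the overall "$2\,\mathrm{div}$"), the sign conventions for $\tilde\partial^\dag$ versus $\tilde\partial$, and the requirement that the angular terms cancel exactly except for the $V_f(r^2)$ contribution. I would check these by first specialising to $f^u=f^v=1$ ($V_f=\partial_t$). In that case every $f$-derivative vanishes and $V_f(r^2)=0$, so the identity must reduce to the standard conservation law $\mathrm{div}J=\beta^{-1}\partial_t\psi\,\Box\phi$, which pins down the normalisation.
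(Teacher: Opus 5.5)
Your direct computation in double null coordinates is correct, and it is precisely the explicit calculation that the paper does not reproduce but cites from \cite{kadar_scattering_2025} (Lemma A.2). Only the bookkeeping needs tightening. With your normalisation $\eta^{uv}=-\tfrac12$ there is no free constant to fix a posteriori: the second-order group contributes exactly $2\beta^{-2}V_f(\psi)\big(-\pu\pv\psi+r^{-2}(\mathring{\slashed{\Delta}}-c_d)\psi\big)$ to $2\,\mathrm{div}J$, not factors of $4$, and $f^u=f^v=1$ gives $V_f=2\partial_t$ rather than $\partial_t$. The bracket you derive, $\abs{\sl\psi}^2+c_dr^{-2}\psi^2$, is the right one; the printed $c_d(\psi)^2$ is missing a factor $r^{-2}$ (a typo, invisible only when $c_d=0$, i.e.\ $d=3$) and is not a notational convention.
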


    For characteristic initial value problems, such as \cref{in:eq:characteristic}, the transversal derivatives on the initial data are determined by transport equations, which lose derivatives.
    We ignore this well understood technical point and focus on deriving estimates  in the case of trivial data and with $f$ is supported away from the two cones
    An appropriate choice of vector fields then yields the following boundedness statement:
    \begin{lemma}
        Fix $a_+<0,a_->1/2$ such that $a_->a_0>a_+$. 
        Let $\beta f\in\Hb^{k;\vec{a}-(1,2,1)}(\D_{-1,1})$ with $\supp f\subset \{u>-1,v>0\}$.
        The unique solution of $\Box \phi=f$ satisfying $\phi|_{\C}=\phi|_{\Cbar_{-1}}=0$ satisfies
        \begin{equation}\label{app:eq:bound}
            \norm{\Ve\psi}_{\Hb^{k;\vec{a}}(\D_{-1,1})}\lesssim_k\norm{\beta f}_{\Hb^{k;\vec{a}-(1,2,1)}(\D_{-1,1})}.
        \end{equation}
    \end{lemma}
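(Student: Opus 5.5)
We prove the estimate \cref{app:eq:bound} by a weighted energy estimate using the divergence identity of \cref{app:lemma:divergence}, integrated over $\D_{-1,1}$ (more precisely over truncated regions $\D_{-1,1}\cap\{v\geq v_0\}\cap\{u\leq -u_0\}$, sending $u_0,v_0\to0$ at the end). We first treat $k=0$ and then commute with $\Vb$. The multiplier is $V_f=f^u\pu+f^v\pv$ with weights adapted to $w=\rho_-^{-a_-}\rho_0^{-a_0}\rho_+^{-a_+}$. Concretely, we take
\begin{equation}
	f^u=-u\,\rho_-^{-2a_-}\rho_0^{-2a_0}\rho_+^{-2a_+}\,\chi_1,\qquad f^v=v\,\rho_-^{-2a_-}\rho_0^{-2a_0}\rho_+^{-2a_+}\,\chi_2,
\end{equation}
so that $V_f$ is a weighted multiple of the b-vector fields $u\pu,v\pv$. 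Here $\chi_{1,2}$ are positive functions of $\rho_\pm$ which we adjust to fix signs.

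The point is that the bulk terms in \cref{app:lemma:divergence} acquire a definite sign. The coefficient $-\pv f^u$ of $(\pu\psi)^2$ is computed from the $v$-dependence of $\rho_-^{-2a_-}\rho_0^{-2a_0}\rho_+^{-2a_+}=v^{-2a_-}r^{2a_--2a_0+2a_+}(-u)^{-2a_+}$. Similarly, $-\pu f^v$ comes from the $u$-dependence. Both derivatives produce factors like $(2a_--2a_0+2a_+)\frac{v}{r}-2a_-$ (up to sign) times the weight. The hypotheses $a_->a_0>a_+$, $a_+<0$ and $a_->1/2$ are exactly what make these, together with the angular/$c_d$ coefficient $r^{-2}V_f(r^2)-(\pu f^u+\pv f^v)$, coercive. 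This means they bound from below $w^2\big((u\pu\psi)^2+(v\pv\psi)^2+\rho_+\rho_-(\abs{\sl\psi}^2+c_d\psi^2)\big)$ in the measure $\mu_{\mathrm b}$ (after dividing by the Jacobian $\rho_-\rho_0^{d-1}\rho_+$). The $\rho_+^{1/2}\rho_-^{1/2}$ weight in $\Ve$ on angular derivatives appears precisely because of this. Where a coefficient degenerates (near $\rho_\pm\to0$ with $v/r$ bounded), we use $\chi_{1,2}$ such as $\rho_\pm^{\mp\epsilon}$, or a Hardy inequality in $v$ (respectively $u$) along null lines to control $\psi$ itself. For $\psi$ with zero data, this is where $a_->1/2$ enters.

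The boundary fluxes through $\C$ and $\Cbar$ vanish by the trivial data and $\supp f$. The fluxes through the truncating surfaces near $\CH$ ($u=-u_0$) and at $v=v_1$ have the good sign for our choice of orientation of $V_f$, so they are simply dropped. The inhomogeneous term $2\beta^{-1}V_f(\psi)\Box\phi=2\beta^{-1}V_f(\psi) f$ is handled by Cauchy--Schwarz. We absorb $\eta\,w^2(\Vb\psi)^2$ into the coercive bulk and are left with $\eta^{-1}\norm{\beta f}^2_{\Hb^{0;\vec a-(1,2,1)}}$. The shift $(1,2,1)$ exactly matches the weights $\rho_-\rho_0^2\rho_+$ from $uv\sim\rho_-\rho_0^2\rho_+\cdot$(b-measure factor). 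This gives the $k=0$ case.

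For higher $k$, we commute the equation with $u\pu, v\pv$ and rotations. Rotations and the scaling $u\pu+v\pv$ commute with $\Box$ up to a multiple of $\Box$. The boost-type combination $u\pu-v\pv$ produces commutator terms of the form $r^{-1}(\pu\mp\pv)$ and $r^{-2}\mathring{\slashed\Delta}$ applied to lower-order derivatives, which are bounded by the lower-order $\Ve$ norms already controlled, so we induct on $k$. The main obstacle I expect is the sign bookkeeping of the bulk coefficients uniformly over all three corners: the region where $\rho_0\to0$ with $\rho_\pm$ both of order one, and the two corners $\rho_\pm\to0$. It is there that I would first try weights $\chi_{1,2}$ that are powers of $\rho_\pm$ with small exponents, and fall back on the Hardy inequality for the zeroth-order term.
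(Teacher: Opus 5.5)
Your proposal has a genuine gap at the core of the $k=0$ estimate. The claim that the hypotheses make all bulk coefficients coercive is false for your multiplier.

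With $\chi_1=\chi_2=1$, the coefficients of $(\pu\psi)^2$ and $(\pv\psi)^2$ are indeed positive. Each is affine in $\rho_-$ and positive at $\rho_-=0,1$, using $a_->0$, $a_0>a_+$, resp.\ $a_+<0$, $a_->a_0$. The coefficient of $\abs{\sl\psi}^2+c_dr^{-2}\psi^2$ is not. With $W=w^2$ and $A=2a_--2a_0+2a_+$ one computes
\begin{equation*}
r^{-2}V_f(r^2)-\pu f^u-\pv f^v=W\big(2a_--2a_++(A-2)(\rho_+-\rho_-)\big),
\end{equation*}
which at $\C$ ($\rho_-=0$) equals $W\big((2a_--1)+(2a_--2a_0-1)\big)$. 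For $a_-=0.6$, $a_0=0.5$, $a_+=-0.1$ every hypothesis holds, yet this is $-0.6\,W$. So angular derivatives (and, for $d\geq4$, $\psi$ itself) enter with the wrong sign on a neighbourhood of $\C$.

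Your fallbacks do not repair this.
\begin{itemize}
\item A Hardy inequality along null lines recovers $\psi$ from $v\pv\psi$ and needs only $a_->0$ near $\C$. It cannot absorb a negative multiple of $\abs{\mathring{\sl}\psi}^2$ without a same-order bound on $v\pv\mathring{\sl}\psi$, which is circular.
\item Weights $\rho_\pm^{\mp\epsilon}$ with small $\epsilon$ make $\chi_1/\chi_2$ small only where $\rho_-$ is exponentially small in $1/\epsilon$. That leaves an $O(1)$ region where the coefficient is still negative.
\item Larger exponents degenerate the control of $u\pu\psi$ or strengthen the norm required on $\beta f$, so the stated estimate is lost.
\end{itemize}

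The missing idea is an $O(1)$ imbalance of the multiplier near $\C$. Including $\chi_{1,2}$, the angular coefficient at $\C$ is $\chi_2(2a_--1)+\chi_1(2a_--2a_0-1)$. Since $2a_--2a_0-1$ can be arbitrarily close to $-1$, you need $\chi_1/\chi_2<2a_--1$ there. This, not the Hardy inequality, is where $a_->1/2$ is actually used.

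The paper splits $\D_{-1,1}$ into $\D^-=\D_{-1,1}\cap\{t<r/2\}$ near $\C$ and $\D^+=\D_{-1,1}\cap\{t>-r/2\}$ near $\CH$, with two currents:
\begin{itemize}
\item On $\D^-$, $J^-$ uses $v\pv+\kappa\abs{u}\pu$ with $\kappa=\frac{2a_--1}{10}$. This gives a coefficient $\geq\frac{9}{10}(2a_--1)$ at $\C$.
\item On $\D^+$, $J^+$ uses $\abs{u}\pu+v\pv$. At $\CH$ the coefficient is $\chi_2(1+2a_0-2a_+)+\chi_1(1-2a_+)>0$ for any positive weights.
\item The extra factors $\rho_+^{\bar c}$, $\rho_-^{-\bar c}$ add multiples of $\bar c\rho_-$, resp.\ $\bar c\rho_+$, to the angular coefficient. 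They keep positivity away from the respective cone and absorb $r^{-2}$ potentials.
\end{itemize}
One then integrates in $u$ on $\D^-$ and in $v$ on $\D^+$.

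Within your single-multiplier framework the same fix works globally. With $\chi_1\equiv\kappa$ and $\chi_2\equiv1$, all three coefficients are affine in $\rho_-$ and positive at both endpoints. Choosing this ratio is exactly the step your proposal is missing.

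A smaller point concerns your commutation step. $[u\pu-v\pv,\mathcal{P}_d]=\frac{2t}{r^3}(\mathring{\slashed{\Delta}}-c_d)$ is not lower order. It is controlled only once rotations have been commuted. Alternatively, commute only with the scaling field and rotations, and recover $u\pu$ and $v\pv$ separately from the equation.
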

        \begin{proof}      
    Let us define for $\bar c\geq0 $
    \begin{nalign}
        J^-[\phi]&=\rho_+^{\bar{c}}v^{-2a_-}\abs{u}^{(2a_--2a_0)}\left( v\pv+\frac{2a_--1}{10}\abs{u}\pu\right)\cdot \tilde\T,\\
        J^+[\phi]&=\rho_-^{-\bar{c}}v^{2a_+-2a_0}\abs{u}^{-2a_+}\left( \abs{u}\pu+v\pv\right)\cdot \tilde\T.
    \end{nalign}
    Let us define $\dot{\Ve}=\Ve\setminus\{1\}$.
    Using \cref{app:lemma:divergence} we obtain the following coercivity estimates (see \cite{kadar_scattering_2025}[Lemmata A.3, A.4]) in $\D^-=\D_{-1,1}\cap\{t<r/2\}$ and $\D^+=\D_{-1,1}\cap\{t>-r/2\}$, respectively:
    \begin{nalign}\label{app:eq:divJpm}
        \mathrm{div}J^-\gtrsim Cv^{-1-2a_-}\abs{u}^{-1+2a_--2a_0}\rho_+^{\bar{c}}\left(1+\bar{c}\rho_-\right)\abs{\beta^{-1}\dot{\Ve}\psi}^2-v^{-1-2a_-}\abs{u}^{-1+2a_--2a_0}\rho_+^{\bar{c}}(\Box\phi)^2u^{2}v^{2},\\
        \mathrm{div}J^+\gtrsim Cv^{-1+2a_+-2a_0}\abs{u}^{-1-2a_+}\rho_-^{-\bar{c}}\left(1+\bar{c}\rho_+\right)\abs{\beta^{-1}\dot{\Ve}\psi}^2-C^{-1}v^{-1+2a_+-2a_0}\abs{u}^{-1-2a_+}\rho_-^{-\bar{c}}(\Box \phi)^2u^{2}v^{2}.
    \end{nalign}
    Note that the integral of the terms on the left in $\D^\pm$ is exactly \cref{app:eq:bound}, e.g. 
    \begin{equation}
        \int v^{-1-2a_++2a_0}\abs{u}^{-1-2a_+}\rho_-^{-\bar{c}}(1+\bar{c}\rho_+)\abs{\beta^{-1}\dot{\Ve}\psi}^2 r^{d-1}\dd u\dd v\abs{\slashed{g}}\sim\norm{\dot{\Ve}\psi}_{\Hb^{0;\vec{a}}(\D^+)}.
    \end{equation}
    Integrating along $\partial_u$ in $\D^-$ and along $\pv$ in $\D^+$ yields the desired control for zeroth order terms.
    Commuting the equation with all the symmetries implies the result for $k\geq0$.
    \end{proof}

    One obtains a small parameter for the nonlinear problem by noting that 
    \begin{equation}
        \norm{f}_{\Hb^{k;(a_-,a_0,a_+)}(\D_{-1,1})}\leq1\implies\norm{f}_{\Hb^{k;(a_-+-\epsilon,a_0-\epsilon,a_+)}(\D_{-1,v})}\lesssim_{k} v^{\epsilon}.
    \end{equation}
    Applying the above lemma together with a bootstrap argument---assuming existence in $\D_{-1,v_1}\cap\{t<T\}$---to the problem $\Box_{\phi}=f+\mathcal{N}[\phi]$ and taking $v_1$ sufficiently small yields \cref{scat:cor:main} a).
    For a linear $V$-term with $\rho_0^{-2}$ behaviour, we can absorb it directly into \cref{app:eq:divJpm} by taking $\bar{c}$ sufficiently large.
    
    We can integrate in the $u$ and $v$ direction to obtain better decay result
    \begin{lemma} \label{app:lemma:uint}
        Fix $a_+<0$ and $a^f_+>-1$.
        For $\phi,f$ supported in $\D^+$ satisfying $\psi\in\Hb^{k;a_0,a_+}(\D^+)$ and $\beta f\in \Hb^{k;a_0-2,a^f_+}(\D^+)$ and $\Box\phi=f$.
        Then $\{1,v\pv\}\psi\in \Hb^{k-2;a_0}(\CH)+\Hb^{k-2;a_0,\min(a_++1,a^f_++1)}(\D^+)$, i.e. there exist $\psi_1(v,x/r)\in \Hb^{k-2;a_0}(\CH)$ and $\psi_2\in \Hb^{k-2;a_0,\min(a_++1,a^f_+-1)}(\D^+)$ such that $\psi=\psi_1+\psi_2$.
    \end{lemma}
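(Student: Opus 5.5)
We integrate the equation along the $u$-direction (from $\C$ towards $\CH$) in $\D^+$, using the rescaled field $\psi=\beta\phi$. In double null coordinates the identity from \cref{app:lemma:divergence}'s derivation gives
\begin{equation}
\pu\pv\psi=-\beta f+\frac{\mathring{\slashed{\Delta}}\psi-c_d\psi}{r^2}.
\end{equation}
We view this as a transport equation for $\pv\psi$ (equivalently $v\pv\psi$) in $u$. Since $f$ and $\phi$ are supported in $\D^+$, we integrate from $u=-\infty$-side (i.e.\ from where $\psi$ vanishes, near $\{t=-r/2\}$) towards $u=0$. This gives the splitting
\begin{equation}
v\pv\psi(u,v,\omega)=\underbrace{v\pv\psi|_{\CH}(v,\omega)}_{=:\psi_1}-\underbrace{\int_u^0 v\Big(-\beta f+r^{-2}(\mathring{\slashed{\Delta}}-c_d)\psi\Big)(u',v,\omega)\,\dd u'}_{=:\psi_2}.
\end{equation}
Here we must first check that the integral over $u'\in(u,0)$ converges, so that the limit on $\CH$ exists. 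That is the content of the first step.

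\textbf{Step 1: pointwise-in-$u$ control of the integrand.} In $\D^+$ we have $r\sim v$ and $\rho_+\sim|u|/v$, while $\rho_-\sim1$. The weight $\rho_+^{-a_+}$ with $a_+<0$ gives $\psi\lesssim v^{a_0}(|u|/v)^{-a_+}$ in the $L^2$ sense. Then:
\begin{itemize}[label={}]
\item the term $v r^{-2}(\mathring{\slashed\Delta}-c_d)\psi$ behaves like $v^{a_0-1}(|u|/v)^{-a_+}$;
\item the term $v\beta f$ behaves like $v^{a_0-1}(|u|/v)^{-a^f_+}$.
\end{itemize}
Integrating $\dd u'$ over $(u,0)$ converges because $-a_+>-1$ and $-a^f_+<1$. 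The result gains one power of $\rho_+$: $\psi_2\in\Hb^{k-2;a_0,\min(a_++1,a_+^f+1)}$. To make this rigorous in the $\Hb$-spaces, we apply a weighted Hardy inequality in the variable $|u|$ at fixed $(v,\omega)$, with measure $\dd u/|u|$. The Hardy inequality $\|\int_u^0 g\|_{L^2(|u|^{-2b-1}\dd u)}\lesssim\|u g\|_{L^2(|u|^{-2b-1}\dd u)}$ holds for $b<0$ in the appropriate direction. If the resulting exponent $\min(a_++1,a^f_++1)$ is not negative, we replace it by a slightly smaller one, or split off the constant-in-$u$ part, which is absorbed in $\psi_1$.

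\textbf{Step 2: the trace $\psi_1$.} The Hardy bound also shows that $v\pv\psi(u,\cdot)$ is Cauchy as $u\to0$ in $L^2(\dd v/v\,\dd\omega)$ with weight $v^{-a_0}$. So $\psi_1\in\Hb^{k-2;a_0}(\CH)$. The same integration applied to $\psi$ itself, using $\pu\psi$ controlled by $\Ve\psi$, gives the statement for $1\cdot\psi$.

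\textbf{Step 3: commutation and derivative loss.} We commute with $\Vb$. The operators $u\pu$, $v\pv$ and the rotations commute with the transport structure up to lower-order terms; note that $u\pu$ applied to $\int_u^0$ returns the integrand times $u$. The angular Laplacian costs two derivatives, which explains the loss $k\to k-2$. Similarly, $\pu(v\pv\psi)$ is expressed by the equation rather than estimated directly.

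\textbf{Main obstacle.} The main obstacle is the Hardy step at the endpoint $u\to0$ with the borderline exponents. We must make sure the weight $\rho_+^{-2a_+}$ produces integrability, and that the $\rho_0$-weights are untouched because $v$ is fixed during the integration. We would handle it by working first with smooth compactly supported approximations and passing to the limit.
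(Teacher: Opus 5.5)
Your treatment of $v\pv\psi$ is sound: integrating $\pu(v\pv\psi)=v\big(r^{-2}(\mathring{\slashed{\Delta}}-c_d)\psi-\beta f\big)$ in $u$ from $\CH$ does give $v\pv\psi-(v\pv\psi)|_{\CH}\in\Hb^{k-2;a_0,\min(a_++1,a^f_++1)}(\D^+)$. However, the decomposition $\psi=\psi_1+\psi_2$ of $\psi$ itself, which is the actual content of the lemma, is not proved. You dispose of it with ``the same integration applied to $\psi$, using $\pu\psi$ controlled by $\Ve\psi$''. That control is only $u\pu\psi\in\Hb^{k-1;a_0,a_+}(\D^+)$, i.e.\ heuristically $\abs{\pu\psi}\lesssim v^{a_0-a_+}\abs{u}^{a_+-1}$. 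Since $a_+<0$, the integral $\int_u^0\abs{u'}^{a_+-1}\dd u'$ diverges, so you get neither the trace $\psi|_{\CH}$ nor any gain in $\rho_+$. In fact, the hypotheses on $\psi$ alone allow $\psi$ to behave like $v^{a_0-a_+}\abs{u}^{a_++\epsilon}$ (cut off to $\D^+$) for small $\epsilon>0$, which blows up at $\CH$. Only the equation excludes this, and your argument never uses the equation to improve the $\pu$-derivative.

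The missing step is the paper's. First integrate $\pv\pu\psi=r^{-2}(\mathring{\slashed{\Delta}}-c_d)\psi-\beta f$ in $v$ at fixed $u$, starting from the edge of the support $v=\abs{u}/3$. Since $r^{-2}\psi\sim v^{a_0-2-a_+}\abs{u}^{a_+}$ and $\beta f\sim v^{a_0-2-a^f_+}\abs{u}^{a^f_+}$, this gives $\abs{\pu\psi}\lesssim v^{a_0-1-c}\abs{u}^{c}$ with $c=\min(a_+,a^f_+)$, for $a_0>1$ as in the applications. Unlike the bound coming from $\Vb\psi$, this is integrable up to $u=0$ because $c>-1$. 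Then $\psi=\psi|_{\CH}-\int_u^0\pu\psi\,\dd u'$ converges, and the Hardy inequality yields $\psi-\psi|_{\CH}\in\Hb^{k-2;a_0,c+1}(\D^+)$. Alternatively, you could integrate your own decomposition of $v\pv\psi$ in $v$ from $v=\abs{u}/3$. This produces $\psi|_{\CH}$, a boundary term of size $\abs{u}^{a_0}$, and a remainder with weight $\rho_+^{c+1}$, under the same condition on $a_0$.

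Separately, fix your signs. The hypothesis $\psi\in\Hb^{k;a_0,a_+}$ means $\psi\lesssim v^{a_0}(\abs{u}/v)^{a_+}$, not $(\abs{u}/v)^{-a_+}$. Hence convergence of the $u$-integrals needs $a_+>-1$, which is implicit in the lemma as well. Also, the Hardy inequality for $\int_u^0$ holds when the target exponent is positive, not negative. Here $c+1>0$ is exactly what is required, so no fallback to a smaller exponent is needed.
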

    \begin{proof}
        Writing $\partial_u\psi=\int r^{-2}(\mathring{\slashed{\Delta}}+c_d)\psi+\beta f \dd v \in \Hb^{k-2;a_0-1,\min(a_++2,a_+^f)}(\D^+)$ and integrating in $u$ yields the result. See \cite{kadar_scattering_2025}[Corollary 4.1] for details.
    \end{proof}
    Iterating this improvement yields \cref{scat:cor:main} b).

    \subsection{Extending admissibility to treat further nonlinearities} \label{subsec:extendingadmissiblity}
    Note that \cref{scat:cor:local} does not work for nonlinearities of the form $\mathcal{N}=\phi\partial\phi\cdot\partial\phi=\phi\eta^{-1}(\dd\phi,\dd\phi)$, due to the term $\phi\partial_u\phi\partial_v\phi$ appearing in the product.
    In particular, observe that for $a_0>0$ and $a_+<0$, we may only deduce that
    \begin{equation}
        \norm{\partial_u\phi\partial_v\phi}_{\A^{j;a_0-2,2a_+-1}(\D_{v,0}^+)}\lesssim \norm{\Ve\phi}_{\A^{j;a_0,a_+}(\D_{v,0}^+)}^2.
    \end{equation}
    This estimate falls an $a_+$ amount short of satisfying the admissibility in \cref{scat:eq:admiss}.
    With one extra multiplier we can also treat such nonlinear terms.
    Therefore we consider
    \begin{equation}\label{app:eq:characteristic_wm}
        \Box\phi=\phi\partial\phi\cdot\partial\phi+\partial\phi_0\cdot\partial\phi+f,\quad \phi|_{\C}=0,\, \phi|_{\Cbar}=0,
	\end{equation}
    with an inhomogeneity $\beta f\in\Hb^{k;a_0-1,a_0-2,\infty}$ for $a_0>\frac{d-1}{2}$ satisfying $\supp f\subset\{v>0,u>-1\}$ and $\phi_0=\phi_0(x/(t-2r))\in C^\infty_{x/(t-2r)}$.
    Here, $f$ serves as a replacement of data prescribed along $\C,\Cbar$ peeled off as in \cref{scat:peeling}.
    This serves as a scalar analogue of \cref{in:eq:wave_map} outside of symmetry.
    
    Let us observe that for $a_+\in (0,1/2)$ the current (this current is closely related to the $r^p$-current of \cite{dafermos_new_2010})
    \begin{equation}
        J^+_p[\phi]=v^{2a_+-2a_0}\abs{u}^{1-2a_+}\pu\cdot \tilde\T
    \end{equation}
    still yields the coercive estimate for $C$ sufficiently small
    \begin{equation}\label{app:eq:rp}
        \mathrm{div}J^+_p\gtrsim Cv^{-1+2a_+-2a_0}\abs{u}^{-1-2a_+}\abs{\beta^{-1}\{u\pu \}\psi}^2-C^{-1}v^{-1+2a_+-2a_0}\abs{u}^{-1-2a_+}(\Box \phi)^2u^{2}v^{2}.
    \end{equation}

    We apply i) \cref{app:eq:divJpm} with $\vec{a}=(a_0+\epsilon,a_0,a_+)$ with $-a_+=\epsilon\ll 1$ commuted $k$, ii) \cref{app:eq:rp} with $a_+=1/8$ commuted $k$ times, iii) \cref{app:eq:rp} with $a_+=1/4$ commuted $k-4$ times to obtain
    \begin{align}
        \norm{\Ve\psi}_{\Hb^{k;\vec{a}}(\D_{-1,1})}&\lesssim \norm{\beta \Box\phi}_{\Hb^{k;\vec{a}-(1,2,1)}(\D_{-1,1})}\label{app:eq:top}\\
        \norm{\{u\pu\}\psi}_{\Hb^{k;a_0,1/8}(\D^+)}&\lesssim \norm{\beta \Box\phi}_{\Hb^{k;a_0-2,1/8-1}(\D^+)}\label{app:eq:top_p} \\
        \norm{\{u\pu\}\psi}_{\Hb^{k-4;a_0,1/4}(\D^+)}&\lesssim \norm{\beta \Box\phi}_{\Hb^{k-4;a_0-2,1/4-1}(\D^+)}\label{app:eq:low}.
    \end{align}
    From now on, $\vec{a}$ will always refer to $\vec{a}=(a_0+\epsilon,a_0,a_+)$. We use \cref{app:lemma:uint} and \cref{app:eq:top} with a cutoff function localising to $\D^+$ to improve the $\partial_v$ derivative
    \begin{multline}
        \norm{\{1,\pv\}\psi|_{\CH}}_{\Hb^{k-2;a_0}(\CH)}+ \norm{\{1,v\partial_v\}(\psi-\psi|_{\CH})}_{\Hb^{k-2;a_0,1/8}(\D^+)}\lesssim \norm{\beta \Box\phi}_{\Hb^{k;\vec{a}-(1,2,1)}(\D_{-1,1})}\\+\norm{\beta \Box\phi}_{\Hb^{k;a_0-2,1/8-1}(\D^+)}\label{app:eq:rad}. 
    \end{multline}
    Let $X^k[\psi]$ denote the sum of the norms on the left hand side of the above four inequalities.
    Finally, let us estimate the nonlinearities.
    Take $k-4-\frac{d+2}{2}>k/2$.
    We proceed only in the region $\D^+$, the other following strictly simpler computations.
    We bound the right hand side of\cref{app:eq:top,app:eq:top_p,app:eq:rad}
    \begin{align*}
        &\begin{multlined}
            \norm{\beta \phi\partial\phi\cdot\partial\phi}_{\Hb^{k;a_0-2,1/8-1}(\D^+)}\lesssim \norm{u\psi \partial\phi\cdot\partial\phi }_{\Hb^{k;a_0-1,1/8}(\D^+)}\lesssim \norm{u\psi \partial_u\phi\partial_v\phi }_{\Hb^{k;a_0-1,1/8}(\D^+)}\\+\norm{\psi |\mathring{\sl}\phi|^2}_{\Hb^{k;a_0,1/8-1}(\D^+)}
        \end{multlined}\\
        &\begin{multlined}\lesssim \norm{\Vb^{k/2}\{1,v\pv\}\phi}_{L^\infty(\D^+)}^2\norm{u\pu \phi}_{\Hb^{k;a_0,1/8}(\D^+)}
            +\norm{\{1,v\pv\}\phi}^2_{\Hb^{k;0,a_+}(\D^+)}\norm{\rho_+^{-1/4}\Vb^{k/2}u\pu\phi}_{L^\infty(\D^+)}\\+\norm{\Ve\psi}_{\Hb^{k;a_0,a_+}(\D^+)}\norm{\Ve\phi}^2_{\Hb^{k;0,a_+}(\D^+)}
        \end{multlined}\\
        &\begin{multlined}
        \lesssim \left(\norm{\{1,v\partial_v\}\psi|_{\CH}}_{\Hb^{k-2;a_0}(\CH)}+\norm{\{1,v\partial_v\}(\psi-\psi|_{\CH})}_{\Hb^{k-2;a_0,1/10}(\D^+)}\right)^2\norm{u\partial_u\psi}_{\Hb^{k;a_0,1/8}(\D^+)}\\
        +\norm{\{1,v\partial_v\}\psi}_{\Hb^{k;a_0,0-}(\D^+)}^2\norm{u\partial_u\psi}_{\Hb^{k-4;a_0,1/4}(\D^+)}+\norm{\Ve \psi}_{\Hb^{k;a_0,a_+}(\D^+)}^3\lesssim (X^k[\psi])^3,
        \end{multlined}
    \end{align*}
    where $\mathring{\sl}$ denotes $x_i\partial_{x_j}-x_i\partial_{x_i}$ derivatives. 
    Similarly we bound the right hand side of \cref{app:eq:low}
    \begin{multline*}
        \norm{\beta \phi\partial\phi\cdot\partial\phi}_{\Hb^{k-4;a_0-2,1/4-1}(\D^+)}\lesssim \norm{\psi\partial_u\phi\partial_v\phi}_{\Hb^{k-4;a_0-2,1/4-1}(\D^+)}+\norm{\psi|\mathring{\sl}\phi|^2}_{\Hb^{k-4;a_0,1/4-1}(\D^+)}\\
        \lesssim \left(\norm{\{1,v\partial_v\}\psi|_{\CH}}_{\Hb^{k-2;a_0}(\CH)}+\norm{\{1,v\partial_v\}(\psi-\psi|_{\CH})}_{\Hb^{k-2;a_0,1/10}(\D^+)}\right)^2\norm{u\partial_u\psi}_{\Hb^{k-4;a_0,1/4}(\D^+)}\\
        +\norm{\Ve \psi}_{\Hb^{k;a_0,a_+}(\D^+)}^3\lesssim (X^k[\psi])^3.
    \end{multline*}
    Therefore, via a bootstrap argument in $D_{-1,v_1}$ for $v_1$ sufficiently small we obtain:
    \begin{lemma}
        Let $a_0>\frac{d-1}{2}, k>10+d$ and consider an inhomogeneity supported in $\D^-\cap\{u>-1\}$ satisfying $\beta f\in\Hb^{k;a_0-1,a_0-2}(\D^-)$.
        Then the nonlinear wave equation \cref{app:eq:characteristic_wm} has a solution in $\D_{-1,v_1}$ for $v_1\ll 1$ satisfying $X^k[\phi]\lesssim 1$.
    \end{lemma}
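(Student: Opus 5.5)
The plan is a bootstrap in $\D_{-1,v_1}$ on the combined norm $X^k[\psi]$, closing with the four linear estimates \cref{app:eq:top,app:eq:top_p,app:eq:low,app:eq:rad} and the trilinear bounds just derived. Smallness comes from $v_1$, as in the sketch of \cref{scat:cor:local}.

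First I would set up the linear theory for $\Box\phi-\partial\phi_0\cdot\partial\phi=F$ with trivial data on $\C\cup\Cbar$. The term $\partial\phi_0\cdot\partial\phi$ is linear. Since $\phi_0$ is smooth in $x/(t-2r)$, its derivatives are homogeneous of degree $-1$, so this term is of the form $r^{-1}(\text{smooth in } t/r)\cdot\partial\phi$. In $\D^-$ and away from $\CH$ this is a first-order long-range term. It can be absorbed into the divergence identities: I would add it to $\Box\phi$ in \cref{app:eq:divJpm} and bound the cross term $V_f(\psi)\,\partial\phi_0\cdot\partial\phi$ by Cauchy--Schwarz against the coercive bulk. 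If this is not small, I would take $\bar c$ large, as was done for the $r^{-2}V$ potential, or restrict to $v_1$ small. With this, the four estimates hold for the linear operator including the $\phi_0$ term.

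Second, I would run a continuity argument. Assume the solution exists on $\D_{-1,v_1}\cap\{t<T\}$ with $X^k[\psi]\le 2C_0\norm{\beta f}$. Feeding $\mathcal N=\phi\,\partial\phi\cdot\partial\phi$ into the right-hand sides, the displayed computations give a bound of $(X^k)^3$. I would use the weight gain: replacing $\vec a$ by $\vec a-\epsilon$ on $\D_{-1,v}$ produces a factor $v_1^{\epsilon}$. This gives $X^k\le C_0\norm{\beta f}+C v_1^{\epsilon}(X^k)^3$, which improves the bootstrap once $v_1\ll1$. The $f$ term is admissible because $\beta f\in\Hb^{k;a_0-1,a_0-2}$ with support in $\D^-$ matches $\vec a-(1,2,1)$ for $\vec a=(a_0+\epsilon,a_0,a_+)$, since $a_+$ plays no role there. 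Local existence for the characteristic problem, together with a Sobolev embedding using $k-4-\frac{d+2}{2}>k/2$ to control the $L^\infty$ factors, lets me continue in $T$ up to $\CH$.

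The main obstacle is the $\partial_u\phi\,\partial_v\phi$ interaction near $\CH$, where $a_+$ falls short. It is handled by the hierarchy already set up. The $r^p$ estimate \cref{app:eq:rp} at $a_+=1/8$ and $1/4$ (at lower order) gains $\rho_+$-decay for $u\partial_u\psi$. Meanwhile \cref{app:eq:rad} splits $\psi$ into its trace on $\CH$ plus a decaying remainder, so that the $\partial_v$ factor is bounded. I would check carefully that the loss of four derivatives in \cref{app:eq:low} is always paired with a top-order factor in $L^2$ and lower-order factors in $L^\infty$. I expect the most delicate verification to be that the linear $\partial\phi_0\cdot\partial\phi$ term preserves \cref{app:eq:rp}. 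Its $\partial_v\phi$ component near $\CH$ has only the weight provided by \cref{app:eq:rad}, so I would estimate it using \cref{app:eq:rad} and smallness of $v_1$ rather than coercivity.
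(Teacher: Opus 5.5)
Your overall architecture matches the paper's: a bootstrap on $X^k$, the cubic bounds for $\phi\,\partial\phi\cdot\partial\phi$, smallness via $v_1^\epsilon$, and absorbing the $\phi_0$-term into \cref{app:eq:divJpm} by taking $\bar c$ large. There is, however, a genuine gap in the one step the paper's proof actually addresses, namely why the linear term $\partial\phi_0\cdot\partial\phi$ can be absorbed. You record it only as $r^{-1}(\text{smooth in }t/r)\cdot\partial\phi$, a ``first-order long-range term''. For a generic coefficient of that form the absorption fails.

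In null components the term contains $\pu\phi_0\,\pv\phi$ and $\pv\phi_0\,\pu\phi$. If $\pu\phi_0$ were merely $O(r^{-1})$, then near $\C$ one would have $\pu\phi_0\,\pv\phi\sim\rho_-^{-1}\rho_0^{-2}\,(v\pv\phi)$, which is exactly borderline, with no gap in $\rho_-$. Since $u^2v^2=\rho_-^2\rho_0^4\rho_+^2$, the error $(\Box\phi)^2u^2v^2$ in \cref{app:eq:divJpm} would then be comparable to $|\dot{\Ve}\phi|^2$ with an $O(1)$ constant. The extra bulk $\bar c\rho_-|\beta^{-1}\dot{\Ve}\psi|^2$ degenerates at $\rho_-=0$, so it cannot absorb this. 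The same happens for $\pv\phi_0\,\pu\phi$ at $\CH$.

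Your fallback of shrinking $v_1$ does not help either. The term is invariant under $(u,v)\mapsto(\lambda u,\lambda v)$ and carries exactly the weight $\rho_0^{-2}$. A factor $v_1^\epsilon$ only arises from a gain in $v=\rho_-\rho_0$, so no small factor is available near the corner $u=v=0$.

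The missing observation is the null structure of a self-similar background. Since $\phi_0$ is a smooth function of $(t/r,\omega)$, and $\pu(t/r)=2v/r^2$, $\pv(t/r)=-2u/r^2$, one has
\[
\pu\phi_0\in\A^{1,-1,0},\qquad \pv\phi_0\in\A^{0,-1,1},\qquad \mathring{\sl}\phi_0\in\A^{0,0,0},
\]
and hence $|\partial\phi_0\cdot\partial\phi|\lesssim\rho_-^{-1/2}\rho_0^{-2}\rho_+^{-1/2}|\Ve\phi|$. The error in \cref{app:eq:divJpm} is therefore $\lesssim\rho_-\rho_+|\Ve\phi|^2$, which is precisely what the $(1+\bar c\rho_\pm)$ bulk absorbs for $\bar c$ large.

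The same pointwise bound also settles your concern about \cref{app:eq:rp}. It places the $\phi_0$-term in $\Hb^{k;a_0-2,a_+-1/2}(\D^+)$, and since $a_+-1/2=-1/2-\epsilon$ this is stronger than the $\Hb^{k;a_0-2,-7/8}$ and $\Hb^{k-4;a_0-2,-3/4}$ needed on the right of \cref{app:eq:top_p,app:eq:low,app:eq:rad}. So that term is controlled by the already-closed estimate \cref{app:eq:top}, by ordering the estimates hierarchically. It needs neither coercivity of $J^+_p$, nor \cref{app:eq:rad}, nor smallness of $v_1$; the last of these is again unavailable for this scale-critical term.
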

    \begin{proof}
        Without the $\phi_0$ term, the result follows by a bootstrap argument and the above estimates.

        For $\phi_0\neq0$, we observe that $\partial_u\phi_0\in\A^{1,-1,0}(\D_{-1,1})\,, \partial_v\phi_0\in \A^{0,-1,1}(\D_{-1,1})$ and $\mathring{\sl} \phi_0\in\A^{0,0,0}(\D_{-1,1})$ thus
        \begin{equation}
            |\partial\phi_0\cdot\partial\phi|\leq r^{-2}\abs{\mathring{\sl}\phi\cdot\mathring{\sl}\phi_0}+\abs{\partial_u\phi_0\partial_v\phi}+\abs{\partial_u\phi\partial_v\phi_0}\lesssim\A^{-1/2,-2,-1/2}\abs{\Ve\phi}.
        \end{equation}
        Using $\bar{c}$ sufficiently large from \cref{app:eq:divJpm} we can bound these linear terms throughout the bootstrap.
    \end{proof}
    
    
    \printbibliography
	
\end{document}